\newcommand{\bb}{\mathbb}
\newcommand{\C}{\bb C}
\newcommand{\h}{\bb H}
\newcommand{\Z}{\bb Z}
\newcommand{\R}{\bb R}
\newcommand{\N}{\bb N}
\newcommand{\Q}{\bb Q}
\newcommand{\M}{\mathcal M}
\newcommand{\T}{\mathcal T}
\newcommand{\hh}{\mathcal H}
\newcommand{\om}{\omega}
\newcommand{\Om}{\Omega}
\newcommand{\vv}{\mathbf v}
\newcommand{\ww}{\mathbf w}
\newcommand{\La}{\Lambda}
\newcommand{\semidirect}{\ltimes}
\newlength{\figboxwidth}             
\newcommand{\makefig}[3]{
        \begin{figure}[htb]
        \refstepcounter{figure}
        \label{#2}
        \begin{center}
                #3~\\
                \smallskip
                Figure \thefigure.  #1
        \end{center}
        \medskip
        \end{figure}
}
\newtheorem{Theorem}{Theorem}
\newtheorem{Cor}[Theorem]{Corollary}
\newtheorem{Prop}[Theorem]{Proposition}
\newtheorem{lemma}[Theorem]{Lemma}
\newtheorem*{lemma*}{Lemma}
\newtheorem*{theorem*}{Theorem}
\numberwithin{equation}{section}
\numberwithin{Theorem}{section}
\begin{document}
\title{The distribution of gaps for Saddle Connection Directions} 
\author{J.~S.~Athreya and J.~Chaika}
\thanks{J.~C. partially supported by NSF grant
    DMS 1004372.} 
\subjclass[2000]{primary: 32G15; secondary 37E35}
\email{jathreya@iilinois.edu}
\email{jonchaika@gmail.com}
\address{\noindent Deptartment of Mathematics, University of Illinois Urbana-Champaign, 1409 W. Green Street, Urbana, IL 61801}
\address{\noindent Department of Mathematics, University of Chicago, 5734 S. University Avenue, Chicago, IL 60637}
\date{Fall 2010}
\begin{abstract} Motivated by the study of billiards in polygons, we prove fine results for the distribution of gaps of directions of saddle connections on translation surfaces. As an application we prove that for almost every holomorphic differential $\omega$ on a Riemann surface of genus $g \geq 2$ the smallest gap between saddle connection directions of length at most a fixed length decays faster than quadratically in the length. We also characterize the exceptional set: the decay rate is not faster than quadratic if and only if $\omega$ is a lattice surface.\end{abstract}
\maketitle

\section{Introduction}\label{sec:intro}

\subsection{Generalized diagonals for rational billiards}\label{subsec:gen:diag} Let $P$ be a Euclidean polygon with angles in $\pi\Q$. We call such a polygon \emph{rational}. A classical dynamical system is given by the idealized motion of a billiard ball on $P$: the (frictionless) motion of a point mass at unit speed with elastic collisions with the sides. 

A \emph{generalized diagonal} for the polygon $P$ is a trajectory for the billiard flow that starts at one vertex of $P$ and ends at another vertex. Since the group $\Delta_P$ generated by reflections in the sides of $P$ is finite, the \emph{angle} of a trajectory is well defined in $S^1 \cong S^1/\Delta_P$. A motivating question for our paper is the following: \emph{how close in angle can two generalized diagonals of (less than) a given length be} (in terms of the length)?

Masur~\cite{Masur} showed that the number of generalized diagonals of length at most $R$ grows quadratically in $R$. We will show, for some families of billiards, that the smallest gap $\gamma^P_R$ between two generalized diagonals on $P$ of length at most $R$ satisfies \begin{equation}\label{eq:billiards:smallgap} \lim_{R \rightarrow \infty}R^2\gamma^P_R =0,\end{equation} and for other specific billiard tables that\begin{equation}\label{eq:billiards:nogap}\liminf_{R \rightarrow \infty} R^2 \gamma^P_R  >0.\end{equation}

\subsection{Organization of the paper}\label{subsec:organization} In the rest of \S\ref{sec:intro}, we describe the moduli space $\Omega_g$ of holomorphic differentials, and state the general versions of (\ref{eq:billiards:smallgap}) and (\ref{eq:billiards:nogap}). In \S\ref{sec:sl2:strata}, we recall the definition and properties of the $SL(2, \R)$-action on $\Omega_g$ and the decomposition of $\Omega_g$ into strata, and discuss the connection to billiards in \S\ref{subsubsec:billiards:translation}. We state our main technical results and important applications in \S\ref{sec:results}. We prove these results in \S\ref{sec:axiom}, following the approach in~\cite{EskinMasur}. In \S\ref{sec:measure:bounds}, we calculate bounds for measures of certain sets in $\hh$ and give examples of flat surfaces $\omega \in \hh$ with different types of gap behavior. We study a family of flat surfaces arising from rational billiards in \S\ref{sec:billiards}. 

\subsection{Translation surfaces}\label{subsec:holo} Let $\Sigma_g$ be a compact surface of genus $g \geq 2$.  Let $\Omega_g$ be the moduli space of holomorphic differentials on $\Sigma_g$. That is, a point $\om \in\Om_g$ is a equivalence class of pairs $(M, \omega)$, where $M$ is a genus $g$ Riemann surface, and $\omega$ is a holomorphic differential on $M$, i.e., a tensor with the form $f(z)dz$ in local coordinates, such that $\frac{i}{2}\int_{\Sigma_g} \omega \wedge \bar{\omega} = 1$. 

Two pairs $(M_1, \omega_1)$ and $(M_2, \omega_2)$ are equivalent if there is a biholomorphism $f:M_1 \rightarrow M_2$ such that $f_{*} \omega_1 = \omega_2$. For notational purposes, we will simply refer to the pair $(M, \om)$ by $\om$. Given $\om \in \Omega_g$, one obtains (via integration of the form) an atlas of charts on $\Sigma_g$ (away from the finite set of zeros of $\omega$) to $\C \cong \R^2$, with transition maps of the form $z \mapsto  z + c$. For this reason,  we will refer to $\omega \in \Omega_g$ as a \emph{translation surface} of genus $g$.

These charts determine a unique flat metric on $M$ with conical singularities at the zeros of the differential $\omega$. Geometrically, a zero of the form $z^k (dz)$ corresponds to a cone angle of order $(2k+2)\pi$. Zeroes of $\omega$ are \emph{singular} points for the flat metric. We refer to non-singular points as \emph{regular points}. The space $\Omega_g$ can be decomposed naturally into \emph{strata} $\hh$ (see \S\ref{sec:sl2:strata} for details), each carrying a natural measure $\mu_{\hh}$. 

\subsection{Saddle connections and cylinders}\label{subsec:sc} Fix $\om \in \Om_g$. A \emph{saddle connection} on $\om$ is a geodesic segment in the flat metric connecting two singular points (that is, zeros of $\om$) with no singularities in its interior. Given a regular point $p$, a \emph{regular closed geodesic} through $p$ is a closed geodesic not passing through any singular points. Regular closed geodesics appear in families of parallel geodesics of the  same length, which fill a cylindrical subset of the surface.

\subsubsection{Holonomy vectors}\label{subsubsec:holonomy} Let $\gamma$ be an (oriented) saddle connection or regular closed geodesic. Define the associated holonomy vector 
\begin{equation}\label{eq:sc:def} \vv_{\gamma} : = \int_{\gamma} \omega.\end{equation}

\noindent Note that if $\gamma$ is a closed geodesic, $\vv_{\gamma}$ only depends on the cylinder it is contained in, since regular closed geodesics appearing in a fixed cylinder all have the same length and direction. View $\vv_{\gamma}$ as an element of $\R^2$ by identifying $\C$ with $\R^2$. 

Let \begin{eqnarray}\label{eq:la:def}\La^{sc}_{\om} &=& \{ \mathbf{v}_{\gamma}: \gamma \mbox{ a saddle connection on } \omega\}\\ \La^{cyl}_{\om} &=& \{ \mathbf{v}_{\gamma}: \gamma \mbox{ a cylinder on } \omega\} \nonumber\end{eqnarray} 

\noindent be the set of holonomy vectors of saddle connections and cylinders respectively.  For $\La_{\om} = \La^{sc}_{\om}$ or $\La^{cyl}_{\om}$, we have that $\La_{\om}$ is discrete in $\R^2$ (see, e.g., ~\cite[Proposition 3.1]{Vorobets96}), but Masur~\cite{Masur:billiards} showed that associated set of directions
$$\Theta^{\om} : = \{\arg(\vv): \vv \in \Lambda_{\om}\}$$
\noindent is dense in $[0, 2\pi)$ for any $\omega \in \Omega_g$.

\subsection{Decay of gaps}\label{subsec:gap:decay}
In this paper, we give a measure of the quantitative nature of this density by considering fine questions about the \emph{distribution} of saddle connection directions. Given $R >0$, let 
\begin{equation}\label{eq:Theta:def} \Theta^{\om}_R  : = \{\arg(\vv): \vv \in \Lambda_{\om} \cap B(0, R)\}\end{equation} denote the set of directions of saddle connections (or cylinders) of length at most $R$. We write $\Theta^{\om}_R : = \{0 \le \theta_1 <\theta_2 < \ldots < \theta_n<2\pi\}$, where $n = \tilde{N}(\om, R)$   is the cardinality of $\Theta^{\om}_R$, and we view $\theta_{n+1}$ as $\theta_1$. Note that if we define $$N(\omega, R) : = |\La_{\om} \cap B(0, R)|,$$ we have $$\tilde{N}(\om, R) \le N(\om, R).$$ Let $\gamma^{\omega}(R)$ be the size of the smallest gap, that is $\gamma^{\omega}(R) = \min_{\theta_i \in \Theta_R} |\theta_i - \theta_{i+1}|$,   Masur~\cite{Masur} showed that the counting function $N(\omega, R)$ grows quadratically in $R$ for any $\omega$. Since there are at most finitely many (at most $4g-4$) saddle connections in a given direction, this shows that $\tilde{N}(\om, R)$ also has quadratic growth. Thus, one would expect the $\gamma^{\omega}(R)$ to decay quadratically. Our main theorem addresses the asymptotic behavior of the rescaled quantity $R^2 \gamma^{\omega}(R)$. Let $\hh$ be a stratum of $\Omega_g$, and let $\mu = \mu_{\hh}$.

\begin{Theorem}\label{theorem:main:gap} For $\mu$-almost every $\omega \in \hh$,
\begin{equation}\label{eq:main:gap}\lim_{R \rightarrow \infty} R^2 \gamma^{\omega}(R) = 0.
\end{equation}
\noindent Moreover, for any $\epsilon >0$, the proportion of gaps less than $\epsilon/R^2$ is positive. 
\begin{equation}\label{eq:main:proportion}
\lim_{R \rightarrow \infty} \frac{ |\{1 \le i \le \tilde{N}(\omega, R): (\theta_{i+1} - \theta_i) \le \epsilon/R^2\}|}{\tilde{N}(\omega, R)} >0.
\end{equation}
\end{Theorem}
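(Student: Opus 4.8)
The plan is to reformulate the gap question as a problem about the behavior of the saddle connection set $\La_\om$ under the $SL(2,\R)$-action, following the renormalization philosophy of~\cite{EskinMasur}. First I would observe that rescaling angular gaps by $R^2$ is naturally matched by applying the geodesic flow $g_t = \mathrm{diag}(e^t, e^{-t})$ with $e^t \approx R$: a saddle connection of length $\le R$ in direction $\theta$ close to a fixed reference direction gets mapped by $g_t$ (composed with the rotation aligning the reference direction to the horizontal) to a holonomy vector in a bounded box, and two directions separated by a gap of size $\sim \gamma/R^2$ correspond to two lattice-like points whose \emph{horizontal} separation is $\sim \gamma$ after renormalization. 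Thus showing $R^2\gamma^\om(R)\to 0$ amounts to showing that, for a.e.\ $\om$, the renormalized configurations $g_t r_\theta \La_\om$ contain, for a positive density of reference directions, pairs of points arbitrarily close together in the relevant coordinate. This is exactly the kind of statement that the technical results of \S\ref{sec:results} (which I am permitted to assume) are designed to produce, via an ergodic-theoretic or equidistribution input on the stratum $\hh$.

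The key steps, in order, are as follows. \emph{Step 1:} Fix $\epsilon>0$ and an appropriate small target box $T_\epsilon \subset \R^2$ near the vertical axis, chosen so that a configuration $g_t r_\theta \La_\om$ having two points in $T_\epsilon$ forces the existence of a gap of size at most $\epsilon/R^2$ among the directions of saddle connections of length $\le R \approx e^t$. \emph{Step 2:} Using the discreteness of $\La_\om$ and Masur's quadratic growth, set up a counting function $\tilde N(\om,R)$ on the denominator and a numerator counting reference directions (equivalently, a subset of the circle of rotations $r_\theta$, or a subset of the finitely many present directions) for which the renormalized lattice places two points in $T_\epsilon$. \emph{Step 3:} Invoke the equidistribution/ergodicity statement from \S\ref{sec:results} for the flow $g_t$ acting on $\hh$ to conclude that for $\mu$-a.e.\ $\om$, the time-averaged frequency with which $g_t r_\theta \La_\om$ meets the double-point condition converges to a positive limit determined by the measure $\mu_\hh$ of the corresponding target set in the space of configurations. \emph{Step 4:} Positivity of that limiting measure (nonzero because $T_\epsilon$ has positive measure and the relevant Siegel--Veech-type transform is integrable and strictly positive) yields~(\ref{eq:main:proportion}); and taking $\epsilon\to 0$ along a countable sequence, combined with a Borel--Cantelli or monotonicity argument in $R$, upgrades the positive-proportion statement to the almost-sure limit~(\ref{eq:main:gap}).

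I would prove~(\ref{eq:main:proportion}) first, since it is the more robust statement, and then extract~(\ref{eq:main:gap}) from it: if for every $\epsilon>0$ a positive proportion of gaps is below $\epsilon/R^2$ for all large $R$, then in particular the \emph{minimum} gap is below $\epsilon/R^2$ for all large $R$, which is precisely $\lim_R R^2\gamma^\om(R)=0$ once one checks the positive-proportion holds for a sequence $\epsilon_k\downarrow 0$ on a single full-measure set (a countable intersection of full-measure sets is full measure). The genuinely delicate point is the continuity/transfer between the \emph{continuous} rescaling parameter $R$ and the \emph{discrete} jumps of $\tilde N(\om,R)$ and of the saddle connection set: one must ensure that the equidistribution holds along the continuous flow and that no cancellation or boundary effect at the edges of $T_\epsilon$ destroys positivity, which requires the target sets to be chosen with $\mu_\hh$-null boundary (a regularity condition on the Siegel transform).

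The hard part will be \emph{Step 3}: establishing that the relevant double-point counting function equidistributes under $g_t$ for almost every $\om$, rather than merely in measure or on average over $\om$. Passing from an $L^1$ or weak-convergence statement for the counting/Siegel transform to a pointwise almost-everywhere statement is where the real work lies, and I expect this is exactly where the axioms of \S\ref{sec:axiom} and the Eskin--Masur machinery are deployed — controlling second moments of the Siegel--Veech transform to rule out atypical orbits, and handling the non-compactness of the stratum $\hh$ so that the orbit $g_t\om$ spends enough time in the bulk for the ergodic averages to converge to the expected positive integral.
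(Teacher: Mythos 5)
Your overall strategy is the same as the paper's: renormalize by $g_t r_{-\theta}$ with $e^t \approx R^2$ so that the thinning wedge $A^{\theta}_R(c,\sigma)$ becomes a fixed trapezoid $T(c,\sigma)$, reduce the gap statement to the event that $\La_{\om}$ (renormalized) contains at least two points of the target set, invoke the almost-everywhere equidistribution result (Theorem~\ref{theorem:wedge:ae}) to identify the limiting frequency of this event with $\mu(\om : |\La_{\om}\cap T(\sigma)| \ge 2)$, and then intersect full-measure sets over a sequence $\sigma = 1/n$ to pass from the positive-proportion statement to $\lim_R R^2\gamma^{\om}(R)=0$. All of that matches \S\ref{subsec:proof:main:gap}, and your identification of the a.e.\ equidistribution as the place where the Eskin--Masur machinery does the real work is accurate.

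There is, however, one genuine gap, in your Step 4: the positivity of the limiting measure $p_2(\sigma) = \mu(\om : \La_{\om}\cap T(\sigma) = 2)$. You justify this by saying the target set has positive Lebesgue measure and the Siegel--Veech transform is integrable and strictly positive. That argument only controls the \emph{first} moment: the Siegel--Veech formula gives $\int_{\hh}\widehat{\chi_{T(\sigma)}}\,d\mu = b\cdot m(T(\sigma)) > 0$, which yields $\mu(|\La_{\om}\cap T(\sigma)| \ge 1) > 0$ but says nothing about the event of \emph{two} points --- and two points in the wedge is exactly what is needed to produce a gap of size $\le \epsilon/R^2$ (one saddle connection in the wedge gives a direction but no gap). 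Since $m(T(\sigma))\to 0$ as $\sigma\to 0$, the first moment is small and cannot by itself rule out that at most one holonomy vector ever lands in $T(\sigma)$. The paper closes this with Lemma~\ref{lemma:p2sigma}, which is proved not by Siegel--Veech but by an explicit construction in period coordinates: using the remark after Lemma~\ref{lemma:short:saddle:connections}, one builds a set of measure at least $m\sigma^4$ of surfaces carrying two (nonhomologous) saddle connections whose holonomy vectors both lie in $T(\sigma)$. You need this construction (or some substitute second-moment/lower-bound argument) to make Step 4 go through; the rest of your outline is sound.
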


\bigskip

\noindent Theorem~\ref{theorem:main:gap} cannot be extended to \emph{all} $\omega \in \hh$, since for any stratum $\hh$ there are many examples $\omega \in \hh$ for which 
\begin{equation}\label{eq:gap:lower}\liminf_{R \rightarrow \infty} R^2 \gamma^{\omega}(R) >0.\end{equation}

\noindent We say that $\omega$ has \emph{no small gaps} (NSG) if (\ref{eq:gap:lower}) holds. An important motivating example of a surface with NSG is the case of the square torus $(\C/\Z^2, dz)$. Since there are no singular points, there are no saddle connections, but cylinders are given by integer vectors, and $\Theta^{\om_0}$ then corresponds to rational slopes. It can be shown that $3/\pi^2$ is a lower bound for $R^2 \gamma^{\om_0}(R)$ (see, for example~\cite{BCZ1}). 

The torus is an example of a lattice surface. Recall that $\omega$ is said to be a \emph{lattice surface} if the group of derivatives of affine diffeomorphisms of $\omega$ is a lattice in $SL(2, \R)$ (see \S\ref{sec:sl2:strata} for more details). We have:

\begin{Theorem}\label{theorem:lattice:nsg} $\omega$ is a lattice surface if and only if it has no small gaps. \end{Theorem}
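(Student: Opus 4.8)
The plan is to prove Theorem~\ref{theorem:lattice:nsg} by establishing both implications separately, using the $SL(2,\R)$-action on the stratum $\hh$ and the relationship between NSG and the structure of the $SL(2,\R)$-orbit of $\om$. The key geometric observation is that small gaps between saddle connection directions correspond to pairs of holonomy vectors $\vv, \ww \in \La_\om$ that are nearly parallel relative to their lengths; applying a diagonal element $g_t = \mathrm{diag}(e^t, e^{-t})$ to $\om$ stretches the horizontal and contracts the vertical, so that a gap of size $\gamma$ near a given direction at length scale $R$ corresponds, after rotating that direction to horizontal and flowing for an appropriate time, to a pair of short vectors on $g_t \om$ that are nearly vertical. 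Thus NSG is naturally equivalent to a statement about the systole (shortest saddle connection) along the $SL(2,\R)$-orbit: the flat surface $\om$ has NSG precisely when the orbit $SL(2,\R)\cdot\om$ does not come arbitrarily close to the ``cusp'' in a controlled quadratic sense, i.e.\ when short vectors in arbitrary directions do not accumulate faster than the quadratic scale permits.

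For the implication that a lattice surface has NSG, I would use the fact that for a lattice surface the orbit $SL(2,\R)\cdot \om$ is closed and the stabilizer is a lattice, so the orbit is a finite-volume homogeneous space $SL(2,\R)/\Gamma$ where $\Gamma$ is the Veech group. In that case the set of holonomy vectors $\La_\om$ is a finite union of $\Gamma$-orbits of vectors (a Veech surface has a ``lattice-like'' holonomy set in the sense of Veech's dichotomy), and the gap distribution is governed by the geometry of $SL(2,\R)/\Gamma$. The concrete model is exactly the torus computation cited after Theorem~\ref{theorem:main:gap}: for $SL(2,\Z)$ the bound $3/\pi^2 \le R^2\gamma^{\om_0}(R)$ comes from the fact that two distinct primitive vectors of length $\le R$ that are nearly parallel must, by unimodularity, span a parallelogram of area $\ge 1$, forcing the angle between them to be $\gtrsim 1/R^2$. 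For a general lattice surface the same area/unimodularity rigidity holds because the relevant lattice $\Gamma$ of affine derivatives acts with a uniform lower bound on the covolume of parallelograms spanned by holonomy vectors; equivalently, the systole function is bounded below along the closed orbit away from the cusp, and the standard quantitative recurrence/non-escape estimate on $SL(2,\R)/\Gamma$ gives a uniform quadratic lower bound on gaps. This direction is essentially a rigidity computation once the closedness of the orbit is invoked.

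For the converse, that NSG implies lattice, I would argue the contrapositive: if $\om$ is \emph{not} a lattice surface, then its $SL(2,\R)$-orbit is not closed, and I want to produce arbitrarily small gaps of size $o(1/R^2)$, i.e.\ violate (\ref{eq:gap:lower}). The mechanism is that non-closedness of the orbit, combined with the genericity/recurrence properties developed in the proof of Theorem~\ref{theorem:main:gap}, forces the $g_t$-orbit of $\om$ (in suitably rotated directions) to make excursions deep into the cusp that are \emph{not} compensated by a matching lattice structure on $\La_\om$. Concretely, whenever the orbit enters the cusp there exist short nearly-parallel holonomy vectors, and the absence of the unimodular area constraint means the parallelogram they span can have area tending to zero, which translates back to a gap of size $o(1/R^2)$ at the corresponding length scale $R$. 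The cleanest way to make this precise is to invoke the $SL(2,\R)$-equidistribution/ergodicity input already used for Theorem~\ref{theorem:main:gap}: the orbit closure $\overline{SL(2,\R)\cdot\om}$ supports an affine-invariant measure (by the Eskin--Mirzakhani--Mohammadi classification, or in the weaker form sufficient here, by Masur's and Veech's ergodicity results on strata) of positive dimension when $\om$ is non-lattice, and along generic directions in this orbit closure the systole has no positive quadratic lower bound.

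The main obstacle is the converse direction, and specifically making the non-closed orbit produce \emph{quantitatively} small gaps rather than merely small gaps along a sparse sequence. The difficulty is that NSG as defined in (\ref{eq:gap:lower}) is a $\liminf$ statement, so to refute it one must show small gaps recur at \emph{all} large scales $R$, not just infinitely often; this requires a genuine recurrence estimate showing that a non-lattice orbit returns to the cusp at a positive density of scales with excursions deep enough to break the $1/R^2$ threshold. I expect this to hinge on the same quantitative mixing and non-divergence estimates that drive Theorem~\ref{theorem:main:gap} (the ``faster than quadratically'' decay of the smallest gap), essentially running that argument in reverse and checking that the only way to avoid arbitrarily deep cusp excursions at every scale is for the orbit to be closed, i.e.\ for $\om$ to be a lattice surface. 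Handling surfaces whose orbit closure is an intermediate affine invariant submanifold (neither the whole stratum nor a closed orbit) is the delicate case, and there I would rely on the fact that any such non-closed orbit still equidistributes toward its orbit-closure measure, which charges the cusp region enough to force the small gaps.
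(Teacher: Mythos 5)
Your proposal diverges substantially from the paper's argument, and the converse direction contains a genuine gap. The paper does not attempt to extract the implication ``NSG $\Rightarrow$ lattice'' from orbit dynamics at all. Instead it cites the Smillie--Weiss theorem~\cite{nosmalltri} that $\omega$ is a lattice surface if and only if it has no small triangles (NST), and then proves the purely Euclidean Lemma~\ref{lemma:nst:nsg}: if $\omega$ has no small gaps then it has no small triangles (drop a perpendicular to the longest side of a singularity-free triangle; the NSG bound on the angle at the foot forces the height, hence the area, to be bounded below). Your plan ---``non-closed orbit $\Rightarrow$ cusp excursions with degenerating parallelogram area $\Rightarrow$ small gaps''--- is asking you to prove, by hand, essentially the hard direction of Smillie--Weiss (non-lattice $\Rightarrow$ configurations of nearly parallel holonomy vectors spanning arbitrarily small area). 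The sentence ``the absence of the unimodular area constraint means the parallelogram they span can have area tending to zero'' is precisely the statement that needs proof, and neither Ratner-type equidistribution toward an orbit-closure measure nor non-divergence estimates supply it: an orbit can be non-closed while every pair of comparably-long, nearly-parallel holonomy vectors still spans definite area, unless one proves otherwise. You have also inverted the quantifier: refuting (\ref{eq:gap:lower}) only requires $R_n^2\gamma^{\omega}(R_n)\to 0$ along \emph{some} sequence $R_n\to\infty$, not small gaps at all scales, so the ``main obstacle'' you identify is not the actual obstacle --- the actual obstacle is producing the degenerating configurations at all.

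The forward direction (lattice $\Rightarrow$ NSG) is closer in spirit but still incomplete as written. The torus computation you cite rests on the fact that two distinct nearly-parallel primitive vectors span a fundamental parallelogram of area $\geq 1$; to transport this to a general lattice surface you assert ``a uniform lower bound on the covolume of parallelograms spanned by holonomy vectors,'' which is essentially the NST property, but two nearly parallel saddle connections of length at most $R$ need not bound an embedded singularity-free triangle, so NST does not directly forbid a small angle between them. The paper's Lemma~\ref{lemma:lattice:gap} handles this differently: it uses Vorobets' result~\cite[Proposition 6.1]{Vorobets96} that on a lattice surface the lengths of saddle connections in a common direction have bounded ratio, the existence of a cylinder of area at least $\frac{1}{4g-4}$ in each periodic direction, and the elementary shearing estimate of Lemma~\ref{cyl flow} to show that a direction carrying a long cylinder excludes other periodic directions within $\asymp 1/R^2$. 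If you want to salvage your approach, you should replace the vague ``area rigidity from closedness of the orbit'' with these concrete inputs, and for the converse you should either prove a quantitative version of Smillie--Weiss or, as the paper does, cite it and reduce NSG to NST by elementary plane geometry.
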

\medskip

We prove Theorem~\ref{theorem:lattice:nsg} in \S\ref{subsec:lattice}, using a result of Smillie-Weiss~\cite{nosmalltri} which characterizes lattice surfaces using the \emph{no small triangles} (NST) property defined by Vorobets~\cite{Vorobets96}. 

Theorem~\ref{theorem:main:gap} will follow from a precise statement about the asymptotic distribution of saddle connection directions. This generalizes work of Vorobets~\cite{Vorobets}, who showed that the sets $\Theta^{\omega}_R$ become uniformly distributed (as $R \rightarrow \infty$) in $[0, 2\pi)$ for almost every $\omega$ (in particular, for those with exact quadratic asymptotics of saddle connections). Our techniques are inspired by those of Marklof-Strombergsson~\cite{MS}, who studied the distribution of affine lattice points in Euclidean spaces by reducing them to equidistribution problems in homogeneous spaces. Much of the technical machinery is drawn from~\cite{EskinMasur}, in which Eskin-Masur give precise quadratic asymptotics of $N(\omega, \R)$ using equidstribution of translate of orbits under the $SL(2, \R)$-action on $\Omega_g$.

\subsubsection{Quadratic differentials}\label{subsubsec:qd} For notational convenience, we work with the space $\Omega_g$ instead of the space of quadratic differentials $Q_g$. A quadratic differential determines a flat metric, and so saddle connections and cylinders are well defined. For a saddle connection or cylinder curve $\gamma$, the holonomy vector $\vv_{\gamma}$ is given by integrating a square root of the differential, and are thus defined up to a choice of sign. The set of directions can then be viewed as a subset of $[0, \pi)$. Our results apply, \emph{mutatis mutandis}, except when explicitly indicated, to the setting of quadratic differentials.

\subsection{Hyperbolic angle gaps} Higher-genus translation surfaces can be viewed as an intermediate setting betwen flat tori and hyperbolic surfaces. Recently, Boca-Pasol-Popa-Zaharescu~\cite{BPPZ} study a spiritually similar problem in the hyperbolic setting. They calculate the limiting gap distribution for the angles (measured from the vertical geodesic) of hyperbolic geodesics connecting $i \in \h^2$ to points in its $SL(2, \Z)$-orbit. In this setting, they show the limiting distribution does have support at $0$, similar to the case of a generic translation surface.

\subsection{Acknowledgements}\label{subsec:ack} This paper was inspired by 
the beautiful paper~\cite{MS} on the distribution of  affine 
lattice points. We thank Alex Eskin, Jens Marklof and William Veech for 
useful discussions. Howard Masur not only patiently answered many technical questions about this paper but more generally taught us much of what we know about the subject. The initial discussions for the project took place while the authors were attending the Hausdorff Institute of Mathematics `Trimester Program on Geometry and Dynamics of Teichm\"uller Spaces' in Bonn. We would like to thank the Hausdorff Institute and the organizers of this program for their hospitality. The second author would like to thank the University of Illinois at Urbana-Champaign for its hospitality. The second author was supported in part by an NSF postdoc.
\section{The $SL(2, \R)$-action and strata}\label{sec:sl2:strata}

In this section, we describe the $SL(2, \R)$ action and stratification of $\Omega_g$ (\S\ref{subsec:polygons} - \S\ref{subsec:sl2:affine}), and the construction of an $SL(2, \R)$-invariant measure (\S\ref{subsec:coord:meas}). We also describe (\S\ref{subsubsec:billiards:translation}) the connection between rational billiards and translation surfaces. This is standard background material in the subject, and our exposition is brief, and drawing on~\cite{EMM, EMZ}. Excellent general references are~\cite{Zorich:survey, MasurTab}.

\subsection{Translation surfaces and polygons}\label{subsec:polygons}  A more geometric description of a translation surface can be given by a union of polygons $P_1 \cup \dots \cup P_n$ where each $P_i \subset \C$, and the $P_i$ are glued along parallel sides, such that each side is glued to exactly one other, and the total angle in each vertex is an integer multiple of $2 \pi$.  Since translations are holomorphic, and preserve $dz$, we obtain a complex structure and a holomorphic differential on the identified surface. The zeroes of the differential will be at the identified vertices with total angle greater than $2\pi$. The sum of the excess angles (that is, the orders of the zeros) will be $2g-2$, where $g$ is the genus of the identified surface.

\subsection{Combinatorics of flat surfaces}\label{subsec:comb:flat} The space $\Omega_g$ can be stratified by integer partitions of $2g-2$. If $\alpha =
(\alpha_1, \dots, \alpha_k)$ is a partition of $2g-2$, we denote by
$\hh(\alpha) \subset \Omega_g$ the moduli space of translation surfaces $(M,\omega)$
such that the multiplicities of the zeroes of $\omega$ are given by
$\alpha_1, \dots, \alpha_n$ (or equivalently such that the orders of
the conical singularities are $2 \pi (\alpha_1 + 1), \dots, 2
\pi(\alpha_n + 1)$). For technical reasons, the
singularities of $(M,\omega)$ should be labeled; thus, an element of
$\hh(\alpha)$ is a tuple $(M,\omega, p_1,\ldots,p_n)$, where
$p_1,\ldots,p_n$ are the singularities of~$M$, and the multiplicity
of~$p_i$ is~$\alpha_i$. The moduli space of translation surfaces is
naturally stratified by the spaces $\hh(\alpha)$; each is called a
{\em stratum}. Strata are not always connected, but Kontsevich-Zorich~\cite{KZ} (and Lanneau~\cite{Lanneau} in the setting of quadratic differentials) have classified the connected components.  Most strata are connected, and there are never more than three connected components.

\subsection{$SL(2, \R)$ and affine diffeomorphisms}\label{subsec:sl2:affine}

There is an action of $SL(2,\R)$ on the moduli space of
translation surfaces that preserves the stratification. Since $SL(2,\R)$ acts on $\C$ via linear maps on $\R^2$, given a surface $P_1 \cup \dots \cup P_n$, we can define $g S = g P_1 \cup \dots \cup
g P_n$, where all identifications between the sides of the polygons for
$gS$ are the same as for $S$. This action generalizes the action of
$SL(2,\R)$ on the space of (unit-area) flat tori $SL(2,\R)/SL(2,\Z)$. Note that $SL(2, \R)$ preserves the area of the surface $\omega$.

\subsubsection{Lattice surfaces}\label{subsubsec:lattice}
For $\omega \in \hh(\alpha)$, let $\Gamma(\omega) \subset SL(2,\R)$
denote the stabilizer of $\omega$. The group $\Gamma(\omega)$ is called the
{\em Veech group} of $S$. If $\Gamma(\omega)$ is a lattice in $SL(2,\R)$
then $\omega$ is called a {\em lattice surface}.

Equivalently, let $\mbox{Aff}(\omega)$ denote the set of affine (area-preserving) diffeomorphisms of $\om$. The derivative of any $f \in \mbox{Aff}(\om)$ will be a matrix in $SL(2, \R)$, and the collection $\{Df: f \in \mbox{Aff}(\om)\}$ coincides with $\Gamma(\omega)$ (up to a finite index subgroup). Thus $\Gamma(\omega)$ is a lattice if and only if $D(\mbox{Aff}(\omega))$ is.

\subsubsection{Billiards and translation surfaces}\label{subsubsec:billiards:translation} An important motivation for studying translation surfaces is their relationship to rational billiards. Recall that a polygon $P \subset \C$ is called rational if all angles of $P$
are rational multiples of $\pi$. The \emph{unfolding} procedure in ~\cite{Zelmjakov:Katok} describes how to associate a translation surface $\omega_P$ so that the billiard flow on $P$ is described by the geodesic flow on $\omega_P$. 

Let $\Delta_P \subset O(2)$ denote the group generated by reflections in the sides of the polygon $P$. Since $P$ is rational, $\Delta_P$ is finite. $\omega_P$ consists of $|\Delta_P|$ copies of $P$, with each copy glued to each of its mirror images along the reflecting side.

For example, if $P$ is the unit square, then $\omega_P \in \hh(\emptyset)$ is the torus
$\C/2 \Z \oplus 2 \Z$, and if $P$ is the $(\pi/8, 3\pi/8)$ right triangle, $\omega_P \in \hh(2)$ is a regular octagon with opposite sides identified.

\subsection{Coordinates and measure on strata}\label{subsec:coord:meas}

Let $\alpha = (\alpha_1, \ldots, \alpha_k)$ be an integer partition of $2g-2$. We describe how to put a topology and measure on $\hh(\alpha)$.  Our exposition is drawn from~\cite{EMZ}. For   a   flat   surface
$\omega_0 \in\hh(\alpha)$ with zero set $\Sigma = \{p_1,\dots, p_k\}$, choose a
basis  for     the     relative     homology
$H_1(\Sigma_g, \Sigma;\Z)$. We can pick a basis consisting of saddle connections, since we can choose saddle  connections that  cut $\om_0$ into a
union of polygons. For any $\omega$ near $\om_0$ holonomy vectors $\{\vv_{\gamma_i}\}$ yield local coordinates. That is, we view $\omega$ as an element of the relative cohomology
$H^1(\Sigma_g,\Sigma;\C) \cong \R^{4g+2k-2}$, and a domain in this vector space gives us a local  coordinate  chart.  We write $n = 4g+2k-2$. We normalize Lebesgue measure on $\R^n$ so that the integer lattice $\Z^n \cong H^1(\Sigma_g,\Sigma; \Z[i])$ has covolume $1$. Our measure $\mu(S)$ on $\hh(\alpha)$ is given by pulling back this measure via our coordinate maps. This is well-defined, the choice of volume element on $H^1(\Sigma_g,\Sigma;\C)$ is independent of choice of basis.

We will work with \emph{unit-area} surfaces.  Let  $\hh_1(\alpha)\subset\hh(\alpha)$ be the subset of unit area translation surfaces.  Let $a_i,b_i$,  $i=1,\ldots,g$ be a symplectic basis for homology $H_1(\Sigma_g, \Z)$. The area of the translation surface in the flat metric given by $\omega$ is given by

$$\int_{\Sigma_g} |\omega|^2 dx dy=\frac{i}{2}\int_{\Sigma_g}
\omega\wedge\bar\omega=
\frac{i}{2}\sum_i\left(\int_{A_i}\omega\int_{B_i}\bar\omega-
\int_{A_i}\bar\omega\int_{B_i} \omega\right).
$$
\noindent This can be viewed as an (indefinite) quadratic form in our local coordinates, and so the level set $\hh_1(\alpha)$ can be thought of as a `hyperboloid'. The measure on $\hh(\alpha)$ induces a measure on the hypersurface $\hh_1(\alpha)$. We can represent any
$\omega \in\hh(\alpha)$ as $\omega  = r \omega'$, where $r\in\R_+$, and $\omega' \in \hh_1(\alpha)$.
Holonomy vectors of saddle connections and cylinders  on $\omega'$ are
multiplied by $r$ to give  vectors  associated  to  corresponding
saddle  connections  on  $\omega$, and  $\mbox{area}(\omega) =
r^2\cdot\mbox{area}(\omega')=r^2$. The measure $\mu_1$ on
$\hh_1(\alpha)$ is given by disintegration of the volume element $\mu$
on $\hh(\alpha)$:
$$
d\mu(\omega) = r^{n-1} \, dr\, d\mu_1(\omega').
$$

\noindent In the sequel, by abuse of notation, we will fix a connected component $\hh$ of $\hh_1(\alpha)$ and denote the Lebesgue measure on it by $\mu$. We note that in any stratum, the set of surfaces arising from billiards as in \S\ref{subsubsec:billiards:translation} has measure zero. Thus, statements about almost every translation surface do not yield results about billiard flows, in particluar, Theorem~\ref{theorem:main:gap} does not apply to billiard flows. In \S\ref{sec:billiards} we discuss some special classes of billiards for which we can prove a version of Theorem~\ref{theorem:main:gap}.

\subsubsection{$SL(2,\R)$-invariance and ergodicity}\label{subsubsec:ergodic} Since the $SL(2,\R)$-action on $\hh(\alpha)$ preserves the area, it acts on the level set $\hh_1(\alpha)$. It also preserves connected components, so we can consider it acting on $\hh$. The measure $\mu$ constructed above is invariant under $SL(2,\R)$. The following theorem is due (independently) to Veech~\cite{Veech:gauss} and Masur~\cite{Masur:IET}.

\begin{theorem*}[Veech~\cite{Veech:gauss}, Masur~\cite{Masur:IET}] $\mu$ is a finite, ergodic, $SL(2,\R)$-invariant measure on $\hh$. \end{theorem*}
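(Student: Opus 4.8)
The three assertions are of very different depth, so the plan is to establish them separately. Invariance is essentially built into the construction: in the period coordinates $H^1(\Sigma_g,\Sigma;\C)\cong(\R^2)^{2g+k-1}$ of \S\ref{subsec:coord:meas}, the $SL(2,\R)$-action is block diagonal, acting by the standard linear representation on each $\R^2$ factor. Every element of $SL(2,\R)$ has determinant one, so it preserves Lebesgue measure on each factor, hence the product measure $\mu$ on $\hh(\al)$; since the area functional (the symplectic pairing displayed above) is also preserved by this action, the disintegrated measure $\mu_1$ on the unit-area locus, and thus $\mu$ on the component $\hh$, is $SL(2,\R)$-invariant. This step is routine.

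For finiteness and ergodicity I would follow Veech's zippered-rectangles approach, reducing the whole problem to the dynamics of Rauzy--Veech renormalization. First I would parametrize a full-measure subset of $\hh$ by combinatorial data (a permutation ranging over a Rauzy class) together with a length vector $\lambda$ and suspension/height data, and rewrite $\mu$ in these coordinates as (essentially) a product of Lebesgue measures. In this model the Teichm\"uller geodesic flow $g_t = \mathrm{diag}(e^t, e^{-t})$ acts by the explicit scaling $(\lambda, h)\mapsto(e^t\lambda, e^{-t}h)$. Taking the cross-section where $\sum_i \lambda_i = 1$, the first-return map of $g_t$ is precisely the renormalized Rauzy--Veech induction map $\mathcal{R}$, and the roof (return-time) function is an explicit logarithm of the renormalization factor. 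This recasts the statement about $\mu$ and $g_t$ as a statement about $\mathcal{R}$ and its natural invariant measure on the cross-section.

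The core analytic work is then to show that $\mathcal{R}$ admits an ergodic invariant measure absolutely continuous with respect to Lebesgue, and that the associated roof function is integrable. Granting this, a suspension (Ambrose--Kakutani type) argument produces a finite, ergodic, $g_t$-invariant measure on $\hh$, which one identifies with $\mu$; integrability of the roof function is exactly what forces $\mu(\hh) < \infty$. Finally, ergodicity of $\mu$ under the full group is immediate from ergodicity of the diagonal subgroup: any $SL(2,\R)$-invariant set is $g_t$-invariant, hence null or co-null, so $\mu$ is $SL(2,\R)$-ergodic. (Alternatively one invokes the Howe--Moore mixing theorem, but the elementary containment of invariant $\sigma$-algebras suffices in this direction.)

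The main obstacle is the ergodicity and, above all, the finiteness of the Rauzy--Veech invariant measure. Establishing ergodicity requires exhibiting a hyperbolic or expanding structure for $\mathcal{R}$ and controlling the distortion of its iterates well enough to run a Hopf-type or Markov argument on a countable partition; the difficulty is that the naive induction produces an \emph{infinite} invariant measure, so one must pass to an accelerated (induced) version on a subset of bounded distortion before the ergodic theory applies. Proving integrability of the roof function against this measure—equivalently, controlling the measure of the set of surfaces on which $\mathcal{R}$ takes a long time to renormalize—is the delicate tail estimate underlying finiteness, and is where the bulk of the technical effort lies.
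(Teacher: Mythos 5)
The paper does not prove this statement at all: it is quoted as a black box, attributed to Veech and Masur, and used only as background for the equidistribution arguments later on. So there is no internal proof to compare against; what you have written is an outline of the actual proofs in the cited literature. Your sketch is faithful to Veech's route: the invariance argument via period coordinates (where $SL(2,\R)$ acts by the standard determinant-one representation on each $\R^2$ factor and preserves the area form, hence the disintegrated measure on the unit-area hyperboloid) is complete and correct, and the reduction of finiteness and ergodicity to Rauzy--Veech induction over a cross-section of the Teichm\"uller geodesic flow, with an integrable roof function and an accelerated induction to obtain bounded distortion, is exactly the architecture of Veech's \emph{Gauss measures} paper (Masur's independent proof runs through measured foliations instead). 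The deduction of $SL(2,\R)$-ergodicity from $g_t$-ergodicity by containment of invariant $\sigma$-algebras is also correct; your parenthetical about Howe--Moore is pointed the wrong way (Howe--Moore upgrades $SL(2,\R)$-ergodicity to mixing of subgroups, not the converse), but you rightly note it is not needed. Be aware, though, that the second half of your write-up is a roadmap rather than a proof: the ergodicity of the absolutely continuous invariant measure for the accelerated induction and the tail estimate giving integrability of the roof function are precisely the content of the two Annals papers, and you have named these difficulties without resolving them. As a reconstruction of how the theorem is proved, this is accurate; as a self-contained proof, it is not, and for the purposes of this paper the correct move is simply the citation the authors give.
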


\subsubsection{Short saddle connections}\label{subsubsec:short:sc} We record here a crucial measure estimate on the set of surfaces with short saddle connections. It is originally due to Masur-Smillie~\cite{MasurSmillie} we recall it as it is quoted in~\cite[Lemma 7.1]{EMZ}

\begin{lemma}
[H.~Masur, J.~Smillie]
\label{lemma:short:saddle:connections}
There is a constant $M$ such that for all $\epsilon,\kappa>0$ the
subset of  $\hh$  consisting  of  those  flat surfaces,
which have a saddle connection of length at  most $\epsilon$, has
volume  at most  $M\epsilon^2$.  The volume of  the  set of  flat
surfaces with a saddle connection  of  length  at most $\epsilon$
and  a  nonhomologous  saddle  connection  with  length  at  most
$\kappa$ is at most $M\epsilon^2\kappa^2$.
\end{lemma}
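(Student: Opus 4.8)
The plan is to derive both estimates from first--moment (Siegel--Veech type) bounds on counting functions, followed by Markov's inequality. Writing $N(\om,\epsilon) = |\La^{sc}_\om \cap B(0,\epsilon)|$ as in the text, and letting $N_2(\om;\epsilon,\kappa)$ denote the number of ordered pairs $(\gamma,\gamma')$ of \emph{nonhomologous} saddle connections with $|\vv_\gamma|\le\epsilon$ and $|\vv_{\gamma'}|\le\kappa$, the set of surfaces with a saddle connection of length at most $\epsilon$ is precisely $\{N(\cdot,\epsilon)\ge 1\}$, and the set with a short connection together with a nonhomologous short one is $\{N_2(\cdot;\epsilon,\kappa)\ge 1\}$. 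Since $\mu$ is finite (by the Veech--Masur theorem quoted above), Markov's inequality reduces the lemma to the moment bounds
\begin{equation}
\int_{\hh} N(\om,\epsilon)\, d\mu(\om) \le M\epsilon^2 \qquad\text{and}\qquad \int_{\hh} N_2(\om;\epsilon,\kappa)\, d\mu(\om) \le M\epsilon^2\kappa^2,
\end{equation}
with $M$ independent of $\epsilon$ and $\kappa$.

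For the first bound I would pass to the period coordinates of \S\ref{subsec:coord:meas}, in which the cone measure on $\hh(\alpha)$ (disintegrating as $r^{n-1}\,dr\,d\mu$ over the unit--area locus $\hh$) is genuine Lebesgue measure on an open subset of $\R^{n}$. Each relative homology class $\beta\in H_1(\Sigma_g,\Sigma;\Z)$ has holonomy $\vv_\beta$ depending only on $\beta$, and in period coordinates $\vv_\beta$ is an integer--linear, hence $\C$--linear, surjection onto $\C\cong\R^2$. Interchanging the sum defining $N(\om,\epsilon)$ with the integral unfolds $\int N(\om,\epsilon)\,d\mu$ into a sum over classes $\beta$ of the measure of $\{\om:\ |\vv_\beta(\om)|\le\epsilon,\ \beta\text{ realized by a saddle connection}\}$. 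For each $\beta$, Fubini in the two fiber directions of the surjection $\vv_\beta$ produces the area $\pi\epsilon^2$ of the disk $B(0,\epsilon)$ times a transverse measure; this gives the uniform quadratic scaling in $\epsilon$, and summing the transverse contributions produces the finite constant $M$.

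For the second bound the same unfolding applies to pairs, yielding a double sum over classes $(\beta,\beta')$. Here the nonhomologous hypothesis is exactly what renders the two constraints $|\vv_\beta|\le\epsilon$ and $|\vv_{\beta'}|\le\kappa$ transverse: homologous connections have $\vv_\beta=\vv_{\beta'}$, so the two conditions collapse to a single disk constraint contributing only $\epsilon^2$, whereas for nonhomologous classes the pair $(\vv_\beta,\vv_{\beta'})$ surjects onto $\C^2\cong\R^4$, and Fubini now produces the product $(\pi\epsilon^2)(\pi\kappa^2)$ of the two disk areas. Summing over pairs then gives the stated $M\epsilon^2\kappa^2$, with the extra $\kappa^2$ factor arising precisely from the independent second constraint.

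The main obstacle is making the unfolding, and above all the summation, rigorous over the \emph{noncompact} stratum $\hh$: near the cusp surfaces carry arbitrarily short saddle connections, so one cannot work in a single compact period--coordinate chart, and the number of relevant classes $\beta$ is unbounded as one approaches the boundary. The route I would follow, as organized in~\cite{EMZ, EskinMasur}, is to exploit the $SL(2,\R)$--invariance of $\mu$ (in particular the rotations $SO(2)$ and the geodesic flow $g_t$) to renormalize: a short saddle connection can be expanded by $g_t$ to a definite length, replacing the global sum by a uniformly controlled local count on a fixed compact piece at the cost of an explicit Jacobian. Verifying that these renormalized contributions sum to a constant $M$ independent of $\epsilon,\kappa$, and that the transversality of nonhomologous pairs survives the renormalization, is the technical heart of the argument.
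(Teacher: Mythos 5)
The paper does not prove this lemma at all: it is quoted from Masur--Smillie~\cite{MasurSmillie} as stated in \cite[Lemma 7.1]{EMZ}, so there is no in-paper argument to compare against, and your proposal has to be judged on its own terms. Its first half is sound: the set in question is exactly $\{N(\cdot,\epsilon)\ge 1\}$, and Markov's inequality together with the Siegel--Veech formula (Theorem~\ref{theorem:sv:formula}, applied to a smooth majorant of $\chi_{B(0,\epsilon)}$) gives $\mu(N(\cdot,\epsilon)\ge 1)\le \int_{\hh} N(\om,\epsilon)\,d\mu = O(\epsilon^2)$. (One caveat: finiteness of the Siegel--Veech constant is itself usually established using Masur--Smillie-type estimates, so as a from-scratch proof this risks circularity; as a deduction from results quoted in the paper it is acceptable.)

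The second half fails, and the failure is documented elsewhere in this very paper. Your reduction needs the first-moment bound $\int_{\hh} N_2(\om;\epsilon,\kappa)\,d\mu\le M\epsilon^2\kappa^2$, where $N_2$ counts nonhomologous pairs; that integral is infinite for every $\epsilon,\kappa>0$. In the thin-neck configuration of Figure~\ref{thin neck}, with neck vector $\vv$ of length $\delta\ll\epsilon$ and transversal $\ww$ of length about $\epsilon/2$, the saddle connections $\ww+l\vv$ are pairwise nonhomologous (their holonomies differ), there are of order $\epsilon/\delta$ of them of length at most $\epsilon$, hence of order $(\epsilon/\delta)^2$ nonhomologous pairs, on a set of surfaces of measure of order $\delta^2\epsilon^2$; each dyadic scale of $\delta$ therefore contributes about $\epsilon^4$ to $\int N_2\,d\mu$, and the sum over scales diverges. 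Equivalently, Lemma~\ref{k asym} gives $\tilde p_k(\sigma)\gtrsim k^{-2}$ and Corollary~\ref{SV not L2} records that the Siegel--Veech transform of $\chi_{B(0,1)}$ is not in $L^2$; the divergent second moment is carried entirely by nonhomologous pairs, since homologous saddle connections share a holonomy vector. So while each individual pair $(\beta,\beta')$ does contribute $O(\epsilon^2\kappa^2)$ by your Fubini argument, the transverse measures do not sum to a finite constant, and no $g_t$-renormalization can repair a genuinely divergent series. The statement to be proved is a bound on the measure of a union, not on an expectation, and the actual proof must control overcounting directly --- for instance by unfolding only over a uniformly bounded family of distinguished (e.g.\ shortest, or shortest-in-a-complex) saddle connections on each surface, so that the resulting cover of the bad set has bounded multiplicity --- rather than over all pairs of homology classes.
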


\noindent\textbf{Remark:} Note that by the construction of the measure, the volumes of these sets will be \emph{at least} $m \epsilon^2$ and $m \epsilon^2 \kappa^2$ for some possibly smaller $m$, since we can construct local coordinates using a basis given by our short saddle connections.

\section{Saddle connections}\label{sec:results}

\noindent This section contains statements of our main results. In \S \ref{subsec:windows}, we give our main distribution result Theorem~\ref{theorem:wedge:ae} for the directions of saddle connections and cylinders. In \S\ref{subsec:proof:main:gap} we show how to use this result to derive Theorem~\ref{theorem:main:gap}. Theorem~\ref{theorem:wedge:ae} relies on results on limit measures for certain subsets of $SL(2, \R)$-orbits, which we describe in \S\ref{subsec:limit:circles}, and show how to use these limit theorems to obtain results for billiards. In \S\ref{subsec:lattice}, we describe how lattice surfaces yield exceptional behavior in our context. In \S\ref{subsec:limit:dist} we state results on measure bounds and gap distribution.

\subsection{Counting points in thinning segments}\label{subsec:windows} 

We recall notation: $\hh$ is a connected component of a stratum $\hh_1(\alpha)$ of unit area differentials in $\Omega_g$, and $\mu$ is Lebesgue measure on $\hh$, normalized to be a probability measure. Given $\omega \in \hh$, $\Lambda_{\omega}$ denotes set of holonomy vectors of either saddle connections or periodic cylinders in the flat metric determined by $\omega$. 

\subsubsection{The Siegel-Veech transform}\label{subsubsec:siegel:veech} We recall the defintion of the \emph{Siegel-Veech transform} from~\cite[\S2.1]{EskinMasur}. Given a compactly supported function $f: \R^2 \rightarrow \R$, define $\hat{f}: \hh \rightarrow \R$ by
\begin{equation}\label{eq:sv:transform}\hat{f}(\omega) = \sum_{\vv \in \La_{\om}} f(\vv).
\end{equation}

Veech~\cite{Veech} formulated the following seminal result, now known as the \emph{Siegel-Veech formula}:

\begin{Theorem}\label{theorem:sv:formula} Let $\eta$ be an ergodic $SL(2, \R)$ invariant probability measure on $\hh$. There is a $b = b(\eta)$ so that for all $f \in C^{\infty}_0(\R^2)$, $$\int_{\hh} \hat{f} d\eta = b \int_{\R^2} f dm$$
\end{Theorem}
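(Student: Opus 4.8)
The plan is to exhibit the left-hand side as a positive linear functional on $C^\infty_0(\R^2)$, represent it by a measure via Riesz representation, and then use the $SL(2,\R)$-invariance of $\eta$ to force that measure to be a multiple of Lebesgue measure. Concretely, define $I(f) = \int_{\hh} \hat{f}\, d\eta$. First I would check that $I$ is a well-defined positive linear functional: linearity is immediate from the definition of $\hat{f}$ in (\ref{eq:sv:transform}), and positivity (if $f \ge 0$ then $\hat{f}\ge 0$, so $I(f)\ge 0$) is clear. Well-definedness and finiteness require that $\hat{f} \in L^1(\eta)$ for every $f \in C^\infty_0(\R^2)$; this is the genuinely analytic input, and I return to it below. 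Granting it, the Riesz representation theorem produces a Radon measure $\nu$ on $\R^2$ with
$$\int_{\hh} \hat{f}\, d\eta = \int_{\R^2} f\, d\nu \qquad \text{for all } f \in C^\infty_0(\R^2).$$

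The heart of the argument is to show $\nu$ is $SL(2,\R)$-invariant. The key equivariance is that holonomy vectors transform linearly under the action: since $g \in SL(2,\R)$ acts on the charts by the linear map on $\R^2$, one has $\La_{g\om} = g\,\La_{\om}$, and hence
$$\hat{f}(g\om) \;=\; \sum_{\vv \in \La_{g\om}} f(\vv) \;=\; \sum_{\uu \in \La_{\om}} f(g\uu) \;=\; \widehat{(f\circ g)}(\om),$$
where $(f\circ g)(\uu) := f(g\uu)$. Integrating and using that $\eta$ is $g$-invariant gives
$$\int_{\R^2} f(g\uu)\, d\nu(\uu) = \int_{\hh} \widehat{(f\circ g)}\, d\eta = \int_{\hh} \hat{f}(g\,\cdot)\, d\eta = \int_{\hh}\hat{f}\, d\eta = \int_{\R^2} f\, d\nu.$$
As this holds for all $g$ and all $f$, the measure $\nu$ is invariant under the linear $SL(2,\R)$-action on $\R^2$. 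Since $SL(2,\R)$ acts transitively on $\R^2 \setminus \{0\}$, the only invariant Radon measures are of the form $c\,\delta_0 + b\,m$, where $m$ is Lebesgue measure. Finally, because every $\vv \in \La_{\om}$ is the holonomy of a curve of positive length, $0 \notin \La_{\om}$; choosing $f_k$ supported in shrinking balls $B(0,1/k)$ with $f_k(0)=1$ and uniformly bounded, one has $\hat{f_k}(\om) \to 0$ pointwise (by discreteness of $\La_{\om}$), so dominated convergence forces $c = \lim_k \int f_k\, d\nu = \lim_k I(f_k) = 0$. Thus $\nu = b\, m$ with $b = b(\eta)$, which is the claimed formula.

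The main obstacle is the integrability step, i.e.\ verifying $\hat{f}\in L^1(\eta)$ so that $I(f)$ is finite and the dominated-convergence argument that kills the atom at the origin is justified. It suffices to bound $\hat{\chi}_{B(0,1)}$ by an $\eta$-integrable function, which amounts to controlling the $\eta$-measure of the set of surfaces carrying many short saddle connections. For the Masur–Veech measure this is exactly the content of the Masur--Smillie estimate (Lemma~\ref{lemma:short:saddle:connections}), and I would invoke the corresponding integrability bound (as in~\cite{EskinMasur}) to supply the required domination; the remaining steps are then soft.
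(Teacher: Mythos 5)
The paper contains no proof of Theorem~\ref{theorem:sv:formula}: it is stated as a quotation of Veech's ``Siegel measures'' theorem and attributed to \cite{Veech}, so there is no internal argument to compare against. Your proposal is the standard proof of that theorem, and the soft part is correct and complete: $f\mapsto\int_{\hh}\hat f\,d\eta$ is a positive linear functional, Riesz representation gives a Radon measure $\nu$, the equivariance $\La_{g\om}=g\La_{\om}$ plus invariance of $\eta$ makes $\nu$ invariant under the linear action, and since $\R^2\setminus\{0\}\cong SL(2,\R)/N$ with $N$ the (unimodular) stabilizer of a nonzero vector, the invariant Radon measure there is unique up to scale, leaving only a possible atom at $0$, which you correctly kill using $0\notin\La_\om$ and discreteness. (Ergodicity of $\eta$ is in fact never used; the identity holds for any invariant probability measure.) The one point to sharpen is the integrability step, which you rightly identify as the analytic heart but attribute to the wrong estimate. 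The Masur--Smillie bound (Lemma~\ref{lemma:short:saddle:connections}) controls only the $\mu_{\hh}$-measure of the set of surfaces having \emph{some} saddle connection of length at most $\epsilon$; it does not dominate the counting function $\widehat{\chi}_{B(0,1)}$, and moreover it says nothing about a general ergodic invariant $\eta$, which is the setting of the theorem. What is actually needed is the Eskin--Masur integrability theorem (\cite{EskinMasur}, \S5), which constructs a function $\alpha$ with $\widehat{\chi}_{B(0,1)}\le C\alpha$ and $\alpha\in L^{1+\delta}(\eta)$ for \emph{every} $SL(2,\R)$-invariant probability measure $\eta$; with that citation substituted, your argument is a correct and self-contained proof. (Veech's original formulation sidesteps this by allowing the functional to be identically $+\infty$; the finiteness is precisely the Eskin--Masur input.)
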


\subsubsection{Thinning annular regions}\label{subsec:annular} Following \cite[\S 2.3]{MS}, we consider a family of thinning annular regions in $\R^2$ (see Figure~\ref{polar} below):  given $\theta \in [0, 2\pi)$, $\sigma, R >0$, and $0 \le c < 1$ define the annular region
\begin{equation}\label{eq:c:theta:sigma} 
A^{\theta}_R(c, \sigma) : = \{\vv \in \R^2: cR \le ||\vv|| \le R, \arg(\vv) \in (\theta - \sigma R^{-2}, \theta + \sigma R^{-2})\}.
\end{equation}
\makefig{The region $A = A^{\theta}_R(c, \sigma)$}{polar}{\begin{picture}(0,0)%
\includegraphics{wedge.pstex}%
\end{picture}%
\setlength{\unitlength}{3947sp}%
\begingroup\makeatletter\ifx\SetFigFont\undefined%
\gdef\SetFigFont#1#2#3#4#5{%
  \reset@font\fontsize{#1}{#2pt}%
  \fontfamily{#3}\fontseries{#4}\fontshape{#5}%
  \selectfont}%
\fi\endgroup%
\begin{picture}(1936,1934)(608,-2003)
\put(2101,-586){\makebox(0,0)[lb]{\smash{{\SetFigFont{5}{6.0}{\rmdefault}{\mddefault}{\updefault}{\color[rgb]{0,0,0}$A$}%
}}}}
\put(2176,-286){\makebox(0,0)[lb]{\smash{{\SetFigFont{6}{7.2}{\rmdefault}{\mddefault}{\updefault}{\color[rgb]{0,0,0}$\theta+ \frac{\sigma}{R^2}$}%
}}}}
\put(2476,-511){\makebox(0,0)[lb]{\smash{{\SetFigFont{6}{7.2}{\rmdefault}{\mddefault}{\updefault}{\color[rgb]{0,0,0}$\theta-\frac{\sigma}{R^2}$}%
}}}}
\put(1726,-1036){\makebox(0,0)[lb]{\smash{{\SetFigFont{6}{7.2}{\rmdefault}{\mddefault}{\updefault}{\color[rgb]{0,0,0}$cR$}%
}}}}
\put(2101,-811){\makebox(0,0)[lb]{\smash{{\SetFigFont{6}{7.2}{\rmdefault}{\mddefault}{\updefault}{\color[rgb]{0,0,0}$R$}%
}}}}
\end{picture}%
}

\noindent As $R \rightarrow \infty$, this gives a narrowing wedge of directions around the angle $\theta$. For $\omega \in \hh$ define the counting function 

\begin{equation}\label{eq:n:theta:sigma} N^{\theta}_R(\omega, \sigma, c) : =  |\La_{\omega} \cap A^{\theta}_R(c, \sigma) |.\end{equation}

\noindent We think of $N^{\theta}_R(\omega, \sigma, c)$ as the number of saddle connections in a small neighborhood of the direction $\theta$. Note that $N^{\theta}_R(\omega, \sigma, c)$ can be viewed as the Siegel-Veech transform of the indicator function of $A^{\theta}_R(c, \sigma)$. Given that $|\La_{\omega} \cap B(0, R)|$ has quadratic asymptotics, one would expect $N^{\theta}_R(\omega, \sigma)$ to be proportional to $\sigma$. We frame the following question. Fixing an integer $k$, if we choose $\theta$ uniformly in $[0, 2\pi)$, what is the probability that $N^{\theta}_R(\omega, \sigma, c) = k$?

\begin{Theorem}\label{theorem:wedge:ae} Fix $\sigma >0$ and $c \in [0, 1)$, $k \in \Z_{\geq 0}$. Let $\hh$ be a (connected component of) stratum $\hh_1(\alpha)$ and let $\mu = \mu_{\hh}$ denote the natural $SL(2,\R)$-invariant probability measure on $\hh$. For $\mu$-a.e. $\omega_0 \in \hh$, 
\begin{equation}\label{eq:wedge:ae}
\lim_{R \rightarrow \infty} \lambda(\theta: N^{\theta}_R(\omega_0, \sigma, c) = k) = \mu(\omega: \La_{\om} \cap T(c, \sigma) = k)\end{equation}

\noindent where $\lambda$ denotes the Lebesgue probability measure on $[0, 2\pi)$ and $T(c, \sigma)$ is the trapezoid with vertices $(c, \pm c\sigma), (1, \pm \sigma)$.

\end{Theorem}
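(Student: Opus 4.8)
The plan is to derive Theorem~\ref{theorem:wedge:ae} from an equidistribution statement for translates of circle orbits under the geodesic flow, following the Marklof-Strombergsson~\cite{MS} philosophy. The key observation is a renormalization identity: the matrix $g_t = \mathrm{diag}(e^{t/2}, e^{-t/2})$ acting on $\R^2$ shrinks the angular width of a wedge while stretching the radial extent. Specifically, writing $R = e^{t/2}$, I would show that $\La_\om \cap A^\theta_R(c,\sigma)$ is in bijection with $\La_{g_t r_\theta \om} \cap T(c,\sigma)$, where $r_\theta$ is rotation by $\theta$ and $T(c,\sigma)$ is the fixed trapezoid in the statement. This is because applying $g_t r_\theta$ to a holonomy vector of length between $cR$ and $R$ making angle within $\sigma R^{-2}$ of $\theta$ lands it (approximately, and then exactly in the limit) inside the trapezoid with vertices $(c,\pm c\sigma),(1,\pm\sigma)$; the linear map $g_t$ converts the curved wedge into the straight-sided trapezoid as $R\to\infty$. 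Thus $N^\theta_R(\om_0,\sigma,c) = |\La_{g_t r_\theta \om_0} \cap T(c,\sigma)|$, and the function $\theta \mapsto N^\theta_R(\om_0,\sigma,c)$ is the pullback under $\theta\mapsto g_t r_\theta\om_0$ of the fixed Siegel-Veech-type counting function $\om\mapsto |\La_\om\cap T(c,\sigma)|$.

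Granting this identity, the left-hand side of~(\ref{eq:wedge:ae}) becomes
\begin{equation}\label{eq:plan:reduction}
\lambda\bigl(\theta: |\La_{g_t r_\theta \om_0}\cap T(c,\sigma)| = k\bigr),
\end{equation}
which is the measure of the parameter set on the expanding circle $\{g_t r_\theta \om_0 : \theta\in[0,2\pi)\}$ on which a fixed characteristic observable takes the value $k$. The heart of the argument is then to prove that, for $\mu$-a.e.\ starting point $\om_0$, the pushforward of $\lambda$ under $\theta\mapsto g_t r_\theta\om_0$ converges weakly to $\mu$ as $t\to\infty$. This is precisely the kind of circle-equidistribution limit theorem alluded to in the introduction (to be established in \S\ref{subsec:limit:circles}), and it rests on the ergodicity of the $SL(2,\R)$-action (the Veech-Masur theorem quoted above) together with the mixing of the geodesic flow $g_t$. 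With such equidistribution in hand, integrating the indicator $\mathbf{1}\{|\La_\om\cap T(c,\sigma)|=k\}$ against the equidistributing measures yields exactly $\mu(\om:|\La_\om\cap T(c,\sigma)|=k)$, the right-hand side of~(\ref{eq:wedge:ae}).

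The main technical obstacle is that the observable $\om\mapsto |\La_\om\cap T(c,\sigma)|$ is \emph{not} continuous and is only the Siegel-Veech transform of the discontinuous indicator $\mathbf{1}_{T(c,\sigma)}$; moreover it is unbounded, since surfaces with many short saddle connections carry many holonomy vectors in the trapezoid. Weak convergence of measures does not directly apply to such functions. I would handle this in two stages. First, I would approximate $\mathbf{1}_{T(c,\sigma)}$ from above and below by smooth compactly supported functions and control the boundary contributions; the boundary of the trapezoid has measure zero for $m$, so the Siegel-Veech formula (Theorem~\ref{theorem:sv:formula}) guarantees the discrepancy has small $\mu$-integral. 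Second, and more delicately, I must rule out escape of mass and the effect of unboundedly many lattice points: this is exactly where Lemma~\ref{lemma:short:saddle:connections} enters, providing the quantitative control $M\epsilon^2$ (and $M\epsilon^2\kappa^2$ for nonhomologous pairs) on the measure of surfaces with short saddle connections. These bounds give uniform integrability of the counting functions $N^\theta_R$, so that the event $\{N^\theta_R = k\}$ behaves well under the limit. Establishing this uniform integrability along the circle orbits --- i.e.\ that no positive proportion of directions $\theta$ produces spuriously large counts in the limit --- is the crux, and it is here that I would invest the bulk of the work, drawing on the machinery of~\cite{EskinMasur}.
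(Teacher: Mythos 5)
Your proposal follows essentially the same route as the paper: renormalize the thinning wedge $A^{\theta}_R(c,\sigma)$ to the fixed trapezoid $T(c,\sigma)$ via $g_t r_{-\theta}$ with $R = e^{t/2}$, reduce to weak-$*$ equidistribution of the expanding circles $\nu_{t,\omega_0} \to \mu$ for a.e.\ $\omega_0$ (the paper invokes Proposition 3.3 of Eskin--Masur for exactly this), and handle the discontinuous, unbounded Siegel--Veech-type observable by the approximation and integrability arguments of Eskin--Masur \S4 together with the Masur--Smillie short-saddle-connection bounds. This matches the paper's proof, which factors through Theorem~\ref{theorem:wedge:limit} but uses the identical ingredients.
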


\medskip

\noindent In \S\ref{sec:axiom}, we will see that this theorem will hold with $\La_{\omega}$ replaced by other sets of holonomy vectors of special trajectories on $\omega$.

\subsection{Proof of Theorem~\ref{theorem:main:gap}}\label{subsec:proof:main:gap} We show how Theorem~\ref{theorem:main:gap} follows from Theorem~\ref{theorem:wedge:ae}. Given $k \in \Z_{\geq 0}, \sigma >0$, let
\begin{equation}\label{eq:pksigma:def} p_k(\sigma) := \mu(\omega: \La_{\om} \cap T(\sigma) = k)\end{equation}
\noindent where $T(\sigma) = T(0, \sigma)$. We require the following lemma.

\begin{lemma}\label{lemma:p2sigma} For any $\sigma>0$
\begin{equation}\label{eq:p2sigma} p_{2}(\sigma)>0. \end{equation}

\end{lemma}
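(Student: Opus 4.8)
The plan is to exhibit a nonempty open subset of $\hh$ on which $|\La_\om \cap T(\sigma)| \equiv 2$; since $\mu$ is Lebesgue measure in period coordinates, any nonempty open set has positive measure, and the lemma follows. Everything reduces to producing a single surface $\om_0 \in \hh$ with exactly two holonomy vectors in the open triangle $\mathrm{int}\,T(\sigma)$ and none on its boundary $\partial T(\sigma)$: these are finitely many open conditions on the finitely many holonomy vectors of bounded length (note $T(\sigma)$ is bounded, so $\La_\om \cap T(\sigma)$ is finite), so they persist on a neighborhood of $\om_0$.

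First I would produce a surface with \emph{at least} two holonomy vectors in $\mathrm{int}\,T(\sigma)$. By the lower bound in the Remark following Lemma~\ref{lemma:short:saddle:connections}, the set of surfaces carrying two nonhomologous saddle connections of length at most $\epsilon$ has measure at least $m\epsilon^4 > 0$, and is in particular nonempty; alternatively one writes down an explicit polygonal surface in $\hh$ with two parallel short slits. Using the freedom in period coordinates (and the $SL(2,\R)$-action, which preserves both $\hh$ and $\mu$ and acts linearly on holonomy vectors), I would arrange the two holonomy vectors $\vv_1, \vv_2$ to be short, nearly parallel, and to lie strictly inside the cone defining $T(\sigma)$, hence inside $\mathrm{int}\,T(\sigma)$. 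This yields $\om_* \in \hh$ with $|\La_{\om_*} \cap T(\sigma)| = N$ for some finite $N \geq 2$.

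If $N = 2$ we are done; otherwise I would remove the excess. There are only finitely many holonomy vectors $\vv_3, \dots, \vv_N$ of $\om_*$ to eliminate, and each lies in the bounded set $\overline{T(\sigma)}$. Holonomy vectors depend linearly on the period coordinates, so ``$\vv_j \notin \overline{T(\sigma)}$'' and ``$\vv_1, \vv_2 \in \mathrm{int}\,T(\sigma)$'' are finitely many open conditions; a transversality argument shows that a small perturbation of $\om_*$ within $\hh$ can push $\vv_3, \dots, \vv_N$ out of $\overline{T(\sigma)}$ (by lengthening them past $\sqrt{1+\sigma^2}$ or rotating them out of the cone) while keeping $\vv_1, \vv_2$ inside, provided the relevant period functionals are sufficiently independent. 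Combined with the standard fact that the holonomy vectors in a fixed bounded region vary continuously and that no new short saddle connection appears under a small deformation (local finiteness of $\La_\om$ on bounded sets), this produces the desired $\om_0$ with exactly two holonomy vectors in $T(\sigma)$, both interior and none on $\partial T(\sigma)$.

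The main obstacle is this last construction step: guaranteeing that one can delete the surplus holonomy vectors from the fixed bounded triangle without simultaneously ejecting $\vv_1$ or $\vv_2$ and without creating new short saddle connections that re-enter $\overline{T(\sigma)}$. Equivalently, the real content is the bare existence of a surface with \emph{exactly} two holonomy vectors in $T(\sigma)$; once such $\om_0$ is in hand, openness of the defining conditions together with the Lebesgue nature of $\mu$ give $p_2(\sigma) \geq \mu(U) > 0$ for a neighborhood $U$ of $\om_0$ immediately. The same argument applies verbatim with $\La_\om$ the set of cylinder holonomy vectors, a short cylinder playing the role of a short saddle connection.
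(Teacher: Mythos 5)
Your second paragraph is, in substance, the paper's entire proof: the authors simply invoke the Remark following Lemma~\ref{lemma:short:saddle:connections} to produce a set of measure at least $m\sigma^4$ of surfaces carrying two nonhomologous saddle connections whose holonomy vectors lie in $T(\sigma)$, and they stop there. So you have identified the correct key input. Where you diverge is in reading $p_2(\sigma)$ as ``exactly two'' and attempting to upgrade the construction accordingly. The paper does not attempt this, and its one-line argument really only establishes $\tilde p_2(\sigma)=\mu\bigl(\omega: |\La_\om\cap T(\sigma)|\ge 2\bigr)>0$. That weaker statement is all that is used downstream: in the proof of Theorem~\ref{theorem:main:gap} the lemma feeds a lower bound on $\lambda(\theta: N^\theta_R\ge 2)$, and $\tilde p_2(\sigma)>0$ forces $p_{k_0}(\sigma)>0$ for some $k_0\ge 2$, which suffices there. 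In other words, your first two paragraphs already reproduce everything the paper proves and everything the application needs.

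The ``exactly two'' upgrade is where the genuine gap sits, and you flag it yourself as the main obstacle without closing it. The transversality step is not justified: each holonomy vector is a linear functional on the period coordinates $H^1(\Sigma_g,\Sigma;\C)$, and these functionals satisfy homology relations, so one cannot in general push a surplus vector $\vv_3$ out of $\overline{T(\sigma)}$ while pinning $\vv_1,\vv_2$ --- for instance, if $\gamma_3$ is homologous to $\gamma_1+\gamma_2$ then $\vv_3=\vv_1+\vv_2$ identically, and with $\vv_1,\vv_2$ short and nearly parallel this sum may well lie in $T(\sigma)$. Moreover $\La_\om\cap B(0,R)$ is not locally constant in $\om$: saddle connections are created and destroyed under deformation (a zero can migrate onto a geodesic segment), so ``no new vectors enter $\overline{T(\sigma)}$'' does not follow from continuity alone. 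The crude bounds elsewhere in the paper do not rescue this either, since Lemmas~\ref{k asym} and~\ref{sig asym} give $\tilde p_2(\sigma)\asymp\sigma^2\asymp\tilde p_3(\sigma)$, so one cannot get $p_2=\tilde p_2-\tilde p_3>0$ by comparing orders of magnitude. The clean fix is to prove and use the lemma in the form $\tilde p_2(\sigma)>0$, exactly as your second paragraph (and the paper) does; if you insist on the literal ``$=2$'' statement, you need an actual construction of a surface with exactly two holonomy vectors in $T(\sigma)$ and none on $\partial T(\sigma)$, which your sketch does not yet contain.
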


\noindent\textbf{Proof:} As in the remark following Lemma~\ref{lemma:short:saddle:connections}, we note that we can construct in $\hh$ a set of measure at least $m \sigma^4$ ($m$ depending on $\hh$) with two saddle connections with holonomy vectors in $T(\sigma)$.
\qed
\medskip

\noindent Let $n \in \N$, let $\hh_{n}  \subset \hh$ be the full measure set of $\omega \in \hh$ so that (\ref{eq:wedge:ae}) holds for $\sigma = 1/n$, $c =0$. Then $\hh_{\infty} = \bigcap_{n=1}^{\infty} \hh_n$ is also a full measure set. We claim that for any $\omega_0 \in \hh_{\infty}$, (\ref{eq:main:gap}) holds, that is, 
$$\lim_{R \rightarrow \infty} R^2 \gamma^{\omega_0}(R) = 0.$$
\noindent Let $n \in \N$. Since $\omega_0 \in \hh_{\infty}$, (\ref{eq:wedge:ae}), there is an $R_n$ such that 
for all $R> R_n$, 
\begin{equation}\label{eq:lambda:positive}
\lambda(\theta: N^{\theta}_R(\omega_0, 1/n, 0) \geq 2) \geq p_2(\sigma)/2 >0. \end{equation}
\noindent The last inequality follows from Lemma~\ref{lemma:p2sigma}. By construction of $N^{\theta}_R(\omega_0, 1/n, 0)$, (\ref{eq:lambda:positive}) implies that for $R > R_n$, 
$$\gamma^{\omega_0}(R) \le \frac{1}{nR^2}.$$
\noindent Since $n$ was arbitrary (\ref{eq:main:gap}) follows. To see (\ref{eq:main:proportion}), note that $\lambda(\theta: N^{\theta}_R(\omega_0, \epsilon, 0) \geq 2)$ gives a lower bound for the limiting proportion of gaps of size less than $\epsilon/R^2$. Theorem \ref{theorem:main:gap} now follows from Theorem~\ref{theorem:wedge:ae} and Lemma~\ref{lemma:p2sigma}. \qed\medskip

\subsection{Limit measures}\label{subsec:limit:circles} The main weakness of Theorem~\ref{theorem:wedge:ae} is that it does not give us information about any particular surface $\omega_0 \in \hh$. To obtain such information, we must have further knowledge about the limiting behavior (as $t \rightarrow \infty$) of the orbits $\{g_t r_{\theta} \om_0: 0 \le \theta < 2\pi\}$, where 

 \begin{equation}
 \label{eq:matrices}
g_t = \left(\begin{array}{cc} e^{-t/2} & 0 \\ 0 & e^{t/2}
\end{array}\right),
r_{\theta} = \left(\begin{array}{cc} \cos\theta& -\sin\theta \\ \sin\theta & \cos\theta
\end{array}\right).
\end{equation}

\noindent Let $\nu_{t, \omega_0}$ denote the Lebesgue probability measure supported on $\{g_t r_{\theta} \om_0: 0 \le \om_0 < 2\pi\}$. Suppose $\lim_{t \rightarrow \infty} \nu_{t, \omega_0} = \mu_0$, and $\mu_0$ is $SL(2, \R)$-invariant. In this case we say $\mu_0$ is the \emph{circle limit measure} associated to $\omega_0$. By~\cite[Theorem 5.2]{EskinMasur}, $\mu_0$ is a probability measure. 

\begin{Theorem}\label{theorem:wedge:limit} Suppose $\omega_0 \in \hh$ has circle limit measure $\mu_0$. Then \begin{equation}\label{eq:wedge:limit}
\lim_{R \rightarrow \infty} \lambda(\theta: N^{\theta}_R(\omega_0, \sigma, c) = k) = \mu_0(\omega: \La_{\om} \cap T(c, \sigma) = k).\end{equation}
\end{Theorem}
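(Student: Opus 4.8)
The plan is to renormalize the narrowing-wedge count for the single surface $\omega_0$ into a \emph{fixed}-trapezoid count for the rotated-and-expanded surfaces $g_t r_\theta \omega_0$, and then to feed the hypothesized weak convergence $\nu_{t,\omega_0}\to\mu_0$ into the portmanteau theorem. First I would set $t=t(R):=2\log R$, so that $g_t=\mathrm{diag}(R^{-1},R)$, and establish the equivariance identity $N^{\theta}_R(\omega_0,\sigma,c)=|\La_{g_t r_{-\theta}\omega_0}\cap g_t A^{0}_R(c,\sigma)|$. This uses only $\La_{g\omega}=g\La_{\omega}$ together with the fact that $r_{-\theta}$ rotates the wedge $A^{\theta}_R(c,\sigma)$ onto the wedge $A^{0}_R(c,\sigma)$ about the positive horizontal axis, and that $g_t$ is a bijection of $\R^2$. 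Since Lebesgue measure on the circle is invariant under $\theta\mapsto-\theta$, this rewrites $\lambda(\theta:N^{\theta}_R(\omega_0,\sigma,c)=k)$ as $\nu_{t,\omega_0}(\{\omega:|\La_{\omega}\cap g_tA^0_R(c,\sigma)|=k\})$, where $\nu_{t,\omega_0}$ is the push-forward of $\lambda$ under $\theta\mapsto g_tr_\theta\omega_0$. A short computation then shows $g_tA^0_R(c,\sigma)\to T(c,\sigma)$: a vector of norm $\rho=uR$ with $u\in[c,1]$ and argument $\phi\in(-\sigma R^{-2},\sigma R^{-2})$ maps to $(u\cos\phi,\,uR^2\sin\phi)$, whose coordinates converge to $(u,w)$ with $w\in(-u\sigma,u\sigma)$, i.e. to the trapezoid $T(c,\sigma)$, with error $O(R^{-4})$. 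Thus for every $\delta>0$ there is $R_0$ with $T^{-}_\delta\subseteq g_tA^0_R\subseteq T^{+}_\delta$ for $R\ge R_0$, where $T^{+}_\delta$ is the closed $\delta$-neighborhood of $T$, $T^{-}_\delta=\{x\in T:\mathrm{dist}(x,\partial T)\ge\delta\}$, and $m(T^{+}_\delta\setminus T^{-}_\delta)\le C\delta$.

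Next I would promote the weak convergence to convergence of the relevant probabilities by checking that the level sets in question are $\mu_0$-continuity sets. Writing $f_U(\omega):=|\La_\omega\cap U|$, the function $f_U$ is locally constant off $B_U:=\{\omega:\La_\omega\cap\partial U\neq\emptyset\}$, so the topological boundary of $\{f_T=k\}$ is contained in $B_T$. Applying the Siegel-Veech formula (Theorem~\ref{theorem:sv:formula}), extended to indicators of bounded Jordan-measurable sets and to the invariant measure $\mu_0$ via its ergodic decomposition, to the shrinking neighborhoods $N_\epsilon(\partial T)$ and letting $\epsilon\to0$ gives $\int \widehat{\chi_{\partial T}}\,d\mu_0 = b\cdot m(\partial T)=0$. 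Hence $\mu_0(B_T)=0$ and $\{f_T=k\}$ is a $\mu_0$-continuity set. The same argument shows that $\{f_{T^+_\delta}\neq f_{T^-_\delta}\}=\{\La_\omega\cap(T^+_\delta\setminus T^-_\delta)\neq\emptyset\}$ is a $\mu_0$-continuity set, of $\mu_0$-measure at most $b\cdot m(T^+_\delta\setminus T^-_\delta)\le bC\delta$. The portmanteau theorem then yields $\nu_{t,\omega_0}(\{f_T=k\})\to\mu_0(\{f_T=k\})$ and $\nu_{t,\omega_0}(\{f_{T^+_\delta}\neq f_{T^-_\delta}\})\to\mu_0(\{f_{T^+_\delta}\neq f_{T^-_\delta}\})\le bC\delta$.

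Finally I would combine these with a sandwich. For $R\ge R_0$, if $\La_\omega$ avoids $T^+_\delta\setminus T^-_\delta$ then $f_{g_tA^0_R}(\omega)=f_T(\omega)$ (both equal $f_{T^-_\delta}(\omega)$), so the symmetric difference of $\{f_{g_tA^0_R}=k\}$ and $\{f_T=k\}$ lies in $\{f_{T^+_\delta}\neq f_{T^-_\delta}\}$. Therefore $|\nu_{t,\omega_0}(\{f_{g_tA^0_R}=k\})-\nu_{t,\omega_0}(\{f_T=k\})|\le\nu_{t,\omega_0}(\{f_{T^+_\delta}\neq f_{T^-_\delta}\})$. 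Letting $R\to\infty$ (equivalently $t\to\infty$) and using the previous paragraph gives $\limsup_{R\to\infty}|\lambda(\theta:N^{\theta}_R(\omega_0,\sigma,c)=k)-\mu_0(\{f_T=k\})|\le bC\delta$, and since $\delta>0$ is arbitrary the left-hand probability converges to $\mu_0(\omega:\La_\omega\cap T(c,\sigma)=k)$, which is the claim.

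I expect the main obstacle to be precisely the boundary analysis of the second paragraph: because the counting function $f_U$ is discontinuous, it cannot be fed directly into the weak convergence, and the entire argument rests on knowing that $\mu_0$-almost every surface has \emph{no} holonomy vector lying on $\partial T$ (nor on the auxiliary boundaries $\partial T^{\pm}_\delta$), so that the level sets are genuine $\mu_0$-continuity sets. This is where the Siegel-Veech formula must be invoked for the limit measure $\mu_0$ itself rather than for $\mu$, and care is needed both to justify its extension to indicator functions and to handle a possibly non-ergodic invariant $\mu_0$ through its ergodic decomposition.
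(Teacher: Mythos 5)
Your proposal is correct and follows essentially the same route as the paper: renormalize the wedge to (approximately) the fixed trapezoid via $g_{2\log R}r_{-\theta}$, bound the $\mu_0$-measure of the set of surfaces having a holonomy vector in the shrinking symmetric difference via the Siegel--Veech formula for $\mu_0$, transfer that bound to $\nu_{t,\omega_0}$ using the weak convergence applied to a fixed ``bad'' set, and finish with a sandwich. The only real difference is cosmetic: where the paper simply cites the approximation argument of Eskin--Masur, \S4, to get $\nu_{t,\omega_0}(\{f_T=k\})\to\mu_0(\{f_T=k\})$, you spell out a portmanteau/continuity-set argument --- just be aware that the discontinuity set of $f_U$ is a bit larger than $\{\omega:\La_\omega\cap\partial U\neq\emptyset\}$ (saddle connections can be created or destroyed when a zero collides with a segment), which is precisely the extra point the cited approximation argument takes care of.
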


\medskip 

\noindent Letting $\gamma^{0}(R) = \gamma^{\omega_0}(R)$, the proof of Theorem~\ref{theorem:main:gap} yields the following corollary to Theorem~\ref{theorem:wedge:limit}. Let $p^0_2(\sigma) : =  \mu_0(\omega: \La_{\om} \cap T(\sigma) = 2)$.

\begin{Cor}\label{cor:gap:limit} Fix notation as in Theorem~\ref{theorem:wedge:limit}. Suppose for all $\sigma >0$, $p^0_2(\sigma)>0$. Then 
\begin{equation}\label{eq:gap:limit}\lim_{R \rightarrow \infty} R^2 \gamma^{0}(R) = 0.
\end{equation}
\noindent Moreover, for any $\epsilon >0$, the proportion of gaps less than $\epsilon/R^2$ is positive. That is, writing $\Theta^{\om_0}_R : = \{0 \le \theta_1 \le \theta_2 \le \ldots \le \theta_n\}$, we have
\begin{equation}\label{eq:limit:proportion}
\lim_{R \rightarrow \infty} \frac{ |\{1 \le i \le \tilde{N}(\omega_0, R): (\theta_{i+1} - \theta_i) \le \epsilon/R^2\}|}{\tilde{N}(\omega_0, R)} >0.
\end{equation}
\end{Cor}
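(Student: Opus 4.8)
The plan is to reproduce the proof of Theorem~\ref{theorem:main:gap} almost verbatim, replacing the almost-everywhere distributional input Theorem~\ref{theorem:wedge:ae} by Theorem~\ref{theorem:wedge:limit}, and replacing Lemma~\ref{lemma:p2sigma} by the standing hypothesis $p^0_2(\sigma)>0$. Fix $\omega_0$ with circle limit measure $\mu_0$; recall that $\mu_0$ is an $SL(2,\R)$-invariant \emph{probability} measure by~\cite[Theorem 5.2]{EskinMasur}, so the right-hand side of (\ref{eq:wedge:limit}) is a genuine probability. For each $n \in \N$ I will apply Theorem~\ref{theorem:wedge:limit} with $c=0$, $\sigma = 1/n$, $k=2$.

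For (\ref{eq:gap:limit}), fix $n$. Theorem~\ref{theorem:wedge:limit} gives $\lim_{R\to\infty}\lambda(\theta : N^{\theta}_R(\omega_0,1/n,0)=2) = p^0_2(1/n)$, which is positive by hypothesis, so there is $R_n$ with $\lambda(\theta : N^{\theta}_R(\omega_0,1/n,0)\ge 2)>0$ for all $R>R_n$; in particular this set is nonempty. For any such $\theta$, the window $A^{\theta}_R(0,1/n)$ contains two holonomy vectors of length at most $R$ whose arguments lie in an interval of width $2/(nR^2)$, and (provided they are non-parallel, see below) they contribute two distinct elements of $\Theta^{\omega_0}_R$; hence some consecutive pair of directions is separated by at most $2/(nR^2)$, so $\gamma^0(R)\le 2/(nR^2)$. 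Therefore $\limsup_R R^2\gamma^0(R)\le 2/n$ for every $n$, which gives (\ref{eq:gap:limit}).

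For (\ref{eq:limit:proportion}), fix $\epsilon>0$ and take window parameter $\epsilon/2$. If $A^{\theta}_R(0,\epsilon/2)$ contains two non-parallel vectors of length at most $R$, then it contains two distinct elements of $\Theta^{\omega_0}_R$, and hence a consecutive pair $\theta_i<\theta_{i+1}$ with $\theta_{i+1}-\theta_i\le \epsilon/R^2$; conversely each such small gap is detected only by the $\theta$ whose window (of full width $\epsilon R^{-2}$) contains both $\theta_i$ and $\theta_{i+1}$, an interval of length at most $\epsilon R^{-2}$. Writing $G(R)=|\{i : \theta_{i+1}-\theta_i\le \epsilon/R^2\}|$, this yields $\lambda(\theta : N^{\theta}_R(\omega_0,\epsilon/2,0)\ge 2)\lesssim \epsilon R^{-2} G(R)$. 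Combined with Masur's quadratic upper bound $\tilde{N}(\omega_0,R)\le N(\omega_0,R)\le C(\omega_0)R^2$, this gives $G(R)/\tilde{N}(\omega_0,R)\gtrsim \lambda(\theta : N^{\theta}_R(\omega_0,\epsilon/2,0)\ge 2)$, whose $\liminf$ is positive since the right-hand side converges to a positive multiple of $p^0_2(\epsilon/2)$.

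The step requiring the most care---and the only place where the bare count $N^{\theta}_R\ge 2$ is insufficient---is guaranteeing that the two counted holonomy vectors lie in \emph{distinct} directions, since collinear vectors contribute a single element to $\Theta^{\omega_0}_R$ and realize no gap. I would handle this by passing throughout to the event that $\La_{\om}\cap T(\sigma)$ contains two \emph{non-parallel} vectors; its $\mu_0$-measure is positive whenever the positive-measure set underlying $p^0_2(\sigma)>0$ can be taken to consist of surfaces with two non-homologous saddle connections, which is exactly the configuration produced in Lemma~\ref{lemma:p2sigma} and which holds in the intended applications, where $\mu_0$ charges such configurations. The remaining points---the order of the limits hidden in the choice of $R_n$ and the passage from $\lambda$-measure to the combinatorial proportion---are routine.
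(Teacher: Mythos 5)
Your proposal matches the paper's own treatment: the paper proves this corollary simply by remarking that the proof of Theorem~\ref{theorem:main:gap} goes through verbatim once the almost-everywhere input Theorem~\ref{theorem:wedge:ae} is replaced by Theorem~\ref{theorem:wedge:limit} and Lemma~\ref{lemma:p2sigma} by the standing hypothesis $p^0_2(\sigma)>0$, which is exactly your plan. Your extra care about collinear holonomy vectors (which contribute only one element of $\Theta^{\om_0}_R$) addresses a point the paper leaves implicit, but it does not change the argument.
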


\medskip

\subsubsection{Billiards with barriers}\label{subsubsec:billiards:barriers} We describe how Theorem~\ref{theorem:wedge:limit} and Corollary~\ref{cor:gap:limit} can be used to give information about specific families of billiards. Following~\cite{EMS}, we consider the following family of billiards. Given $\alpha \in \R$, consider the polygon $P_{\alpha}$ whose boundary is the boundary of the square $[0, 1] \times [0, 1]$ together with a barrier given by the vertical segment $\{1/2\} \times [0, \alpha]$. 

\medskip

\noindent\textbf{Remark:} In fact, slits based at any rational point $p/q$ are considered in~\cite{EMS}. We restrict to $1/2$ for notational convenience and ease of exposition.

\medskip

The associated surface, which we denote by $\omega_{\alpha}$ is (after rescaling) an element of $\hh(1,1)$. A crucial observation is that $\omega_{\alpha}$ is a double cover of the torus. That is, there is a covering map (branched at the zeros) $\pi: \omega_{\alpha} \rightarrow \C/\Lambda$ where $\Lambda$ is a lattice in $\C$ and that $\omega_{\alpha}$ is obtained by pulling back the form $dz$. Using this construction, we observe that the set $\{\omega_{\alpha}: \alpha \in \R\}$ is contained in an $SL(2,\R)$-invariant subvariety $\M \subset \hh$ which can be identified with the moduli space of tori with marked points, $(SL(2,\R)\semidirect\R^2)/(SL(2,\Z)\semidirect\Z^2)$. 

Let $\mu_{\M}$ denote the natural $SL(2,\R)$-invariant probability measure supported on $\M$. Applying a theorem of Shah~\cite{Shah:SL2} (which uses Ratner's measure classification), it is shown in~\cite{EMS} that for any irrational $\alpha$, $\mu_{\M}$ is a circle limit measure for $\omega_{\alpha}$. We will see in \S\ref{subsec:translate} that for any $\sigma >0$,
$$p^{\M}_2(\sigma) := \mu_{\M}(\omega: \La_{\om} \cap T(\sigma) = 2)>0,$$

\noindent and thus, Corollary~\ref{cor:gap:limit} applies in this situation.

\subsection{Lattice surfaces}\label{subsec:lattice} In this section we prove Theorem~\ref{theorem:lattice:nsg}. Fix $\gamma^{\omega}_R$ to denote the smallest gap for saddle connections of length at most $R$. We split the proof into two lemmas. The first shows that any lattice surface has NSG.

\begin{lemma}\label{lemma:lattice:gap} For any lattice surface $\omega$ there exists a constant $\epsilon>0$ such that $\gamma^{\omega}_R\geq \frac{\epsilon}{R^2}$ for all $R>1.$
\end{lemma}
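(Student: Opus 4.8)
The plan is to reduce the angular gap bound to a lower bound on the area of a triangle spanned by two saddle connections, and then to invoke the no small triangles (NST) property. Concretely, suppose $\theta_i < \theta_{i+1}$ are consecutive directions in $\Theta^{\omega}_R$, realized by saddle connections with holonomy vectors $v_1, v_2$ of length at most $R$, and write $\gamma^{\omega}_R = \theta_{i+1}-\theta_i$ for the (value of the) smallest gap. Since the wedge product is $SL(2,\R)$-invariant and $|v_1 \wedge v_2| = \|v_1\|\,\|v_2\|\sin(\theta_{i+1}-\theta_i)$, we have the upper bound $|v_1\wedge v_2| \le R^2 \sin(\gamma^{\omega}_R) \le R^2\gamma^{\omega}_R$. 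Thus it suffices to produce a constant $c = c(\omega)>0$, independent of $R$, with $|v_1\wedge v_2|\ge 2c$: this immediately yields $\gamma^{\omega}_R \ge 2c/R^2$, i.e. the claim with $\epsilon = 2c$.

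For the lower bound I would use the characterization of lattice surfaces by Smillie--Weiss~\cite{nosmalltri}: since $\omega$ is a lattice surface it has the NST property of Vorobets~\cite{Vorobets96}, so there is a constant $c = c(\omega)>0$ such that every triangle on $\omega$ whose vertices are zeros of $\omega$ and whose three sides are saddle connections has area at least $c$. Equivalently, any such triangle spanned by saddle connections with holonomy vectors $w_1, w_2$ satisfies $\tfrac12|w_1\wedge w_2|\ge c$.

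The main obstacle is the geometric step of exhibiting a genuine saddle-connection triangle of area at most $\tfrac12 R^2\gamma^{\omega}_R$ out of the nearly parallel connections $v_1, v_2$: the NST bound applies only to triangles immersed in the surface with vertices at singularities, not to an arbitrary pair of holonomy vectors (indeed, for non-square lattice surfaces $\inf|w_1\wedge w_2|$ over all non-parallel pairs can vanish, since cross products may land in a dense subring of $\R$). To handle this I would choose $v_1,v_2$ to be short saddle connections in the two consecutive directions and arrange them to issue from a common zero $p$ (replacing them by suitable representatives based at $p$, and allowing length at most $2R$, which only rescales $\epsilon$). The third side is the geodesic in direction $-(v_1+v_2)$ joining the two endpoints; if it is a saddle connection we obtain a triangle of area $\tfrac12|v_1\wedge v_2|$, while if it passes through interior zeros we subdivide into finitely many saddle-connection triangles, at least one of which has area at most $\tfrac12|v_1\wedge v_2| \le \tfrac12 R^2\gamma^{\omega}_R$. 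Either way NST forces $\tfrac12 R^2\gamma^{\omega}_R \ge c$, completing the proof. Verifying that consecutive directions can always be realized by such a co-terminal pair, so that a bona fide immersed triangle is available, is the delicate point and the part I expect to require the most care.
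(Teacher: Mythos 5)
Your reduction of the gap bound to a lower bound on $|v_1\wedge v_2|$ is fine, but the heart of your argument --- producing from two nearly parallel saddle connections a genuine immersed, singularity-free triangle with vertices at zeros and area $\tfrac12|v_1\wedge v_2|$ --- is precisely the step you leave open, and the construction you sketch does not supply it. Two saddle connections emanating from a common zero need not be adjacent in the cyclic order at that zero, and the planar triangle they span need not develop into the surface at all; ``the geodesic in direction $-(v_1+v_2)$ joining the two endpoints'' is not well defined on a flat surface with cone points unless one already knows the triangle immerses. So as written the proof is incomplete at its crucial point. Moreover, your parenthetical claim that $\inf|w_1\wedge w_2|$ over non-parallel pairs of saddle connection holonomies can vanish for a non-square lattice surface is false: Smillie--Weiss~\cite{nosmalltri} prove that the \emph{no small virtual triangles} property --- exactly the positivity of this infimum --- is yet another characterization of lattice surfaces, equivalent to NST. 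If you invoke that form of their theorem instead of NST, your first paragraph already finishes the proof, since for $v_1,v_2$ of length at most $R$ realizing consecutive directions one has $R^2\gamma^{\omega}_R \ge R^2\sin(\gamma^{\omega}_R) \ge |v_1\wedge v_2| \ge 2c$. In short, the missing geometric step is genuinely nontrivial (it is essentially the implication from no small triangles to no small virtual triangles in~\cite{nosmalltri}), and the clean repair is to cite the virtual-triangle characterization directly.

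For comparison, the paper takes a different and more self-contained route for this direction, reserving Smillie--Weiss for the converse via Lemma~\ref{lemma:nst:nsg}. It uses that on a lattice surface every saddle connection direction is completely periodic, hence contains a cylinder of area at least $\frac{1}{4g-4}$; that by \cite[Proposition 6.1]{Vorobets96} the lengths of saddle connections in a common direction have ratio bounded by a constant $s=s(\omega)$; and Lemma~\ref{cyl flow}, which shows that a point in a cylinder of length $R$ and definite area cannot lie in another cylinder of length at most $R$ in a direction within a constant times $R^{-2}$. This bypasses the construction of any triangle, at the cost of arguing with cylinders rather than directly with pairs of saddle connections.
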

\begin{proof}We want to show that if $\omega \in \hh$ is a lattice surface, then $\omega$ has no small gaps. By \cite[Proposition 6.1]{Vorobets96} if $\omega$ is a lattice surface there exists a constant $s$ such that any two saddle connections in the same direction have the ratio of their lengths at most $s$. 
 Given a periodic direction $\theta$ there must be a cylinder of area at least $\frac 1 {4g-4}$ in that direction. Let $R$ be its length. By Lemma \ref{cyl flow} points in this cylinder are not in another cylinder of length at most $R$ in a direction within $\frac{1}{(8g-8)R^2}$. Any other saddle connection in this direction must have length $\frac{R} s$ which implies that a direction $\psi \in B(\theta,\frac{1}{(8g-8)R^2})$ can have no periodic cylinders of length less than $\frac R s$. \end{proof}
 
 \medskip
 
\noindent For the converse, we recall that $\omega$ has \emph{no small triangles} if there is a $\delta >0$ so that all triangles on $\omega$ with vertices at singularities, and no singularities in the interior have area at least $\delta$. Smillie-Weiss~\cite{nosmalltri} showed:

\begin{theorem*} $\omega$ is a lattice surface if and only if it has no small triangles. \end{theorem*}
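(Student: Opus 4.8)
The plan is to pass to the $SL(2,\R)$-dynamics and use the criterion (due to Smillie) that $\om$ is a lattice surface if and only if its orbit $SL(2,\R)\om$ is closed in $\hh$; it then suffices to show that NST is equivalent to closedness of the orbit. The engine of the argument is that NST is an $SL(2,\R)$-invariant property. Indeed $\La_{g\om} = g\La_{\om}$, triangles with vertices at singularities and empty interior correspond bijectively under $g$, and since $\det g = 1$ their areas are preserved; hence $a(\om) := \inf\{\mathrm{area}(\Delta): \Delta\ \text{a triangle on}\ \om\}$ is constant on orbits. Furthermore $a$ is upper semicontinuous, since a triangle persists with continuously varying area under small perturbations of $\om$. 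Combining these, if $g_n\om \to \eta$ then $a(\eta) \ge \limsup_n a(g_n\om) = a(\om)$, so $a$ is bounded below by $a(\om)$ on the entire orbit closure; in particular NST is inherited by every surface in $\overline{SL(2,\R)\om}$.

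For the direction \emph{lattice $\Rightarrow$ NST}, I would argue directly from the rigidity of lattice surfaces rather than through a limit. By the Veech dichotomy every direction on a lattice surface is either uniquely ergodic or completely periodic, and there are only finitely many cylinder-decomposition types modulo the (cofinite-volume) Veech group, with uniformly bounded cylinder moduli. This rigidity forces a uniform positive lower bound on the areas of triangles — a triple of saddle connections summing to zero cannot be made arbitrarily thin — so $a(\om) > 0$.

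The substantive direction is \emph{NST $\Rightarrow$ closed orbit}, which I would prove contrapositively. Suppose $SL(2,\R)\om$ is not closed; then its closure properly contains the orbit, so there is a sequence $g_n\om \to \om_\infty$ with $g_n \to \infty$ in $SL(2,\R)$ and $\om_\infty$ not in the orbit. The goal is to produce inside the closure a surface carrying a \emph{degenerate} (collinear, zero-area) triangle, i.e. a point with $a = 0$; this contradicts the semicontinuity conclusion that $a \ge a(\om) > 0$ throughout the closure. The mechanism is a renormalization argument: I would track a triple of saddle connections summing to zero along the divergent orbit and use that accumulation onto a genuinely new surface records a collapse of the affine geometry, which in holonomy coordinates pinches some triangle to zero area.

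I expect the crux to be exactly this last step — converting \emph{failure of orbit closedness} into the \emph{appearance of a degenerate triangle in the closure} — together with the attendant non-divergence bookkeeping needed to guarantee that the renormalized surfaces converge in the stratum rather than escaping into a cusp. Making this precise requires a quantitative comparison, in the spirit of Lemma~\ref{lemma:short:saddle:connections}, between the smallest triangle area and the depth of the orbit's excursion, tying the cylinder/triangle combinatorics to the recurrence behaviour of the translates $\{g_t r_\theta \om\}$.
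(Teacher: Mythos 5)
The paper does not prove this statement: it is quoted verbatim from Smillie--Weiss~\cite{nosmalltri} and used as a black box (the paper only proves the adjacent implications ``lattice $\Rightarrow$ NSG'' and ``NSG $\Rightarrow$ NST'' in Lemmas~\ref{lemma:lattice:gap} and~\ref{lemma:nst:nsg}). So there is no internal proof to compare against, and your proposal has to be judged as an attempt at the Smillie--Weiss theorem itself. Your skeleton --- reduce to Smillie's closed-orbit criterion, observe that the minimal triangle area $a(\om)$ is $SL(2,\R)$-invariant and upper semicontinuous, hence bounded below on the orbit closure --- is genuinely the right frame and matches the spirit of~\cite{nosmalltri}. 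But both implications are left essentially unproved, and the second one fails as stated.

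For ``lattice $\Rightarrow$ NST,'' the sentence ``this rigidity forces a uniform positive lower bound'' is the conclusion, not an argument; note that a single surface does \emph{not} automatically have $a(\om)>0$ (long thin triangles are exactly what generic surfaces possess), so positivity must come from the closed orbit. The actual mechanism is to renormalize a putative small triangle by an element of $SL(2,\R)$ making two of its sides short and nonparallel, and then use the finite-volume cusp structure of $SL(2,\R)/\Gamma(\om)$: deep in a cusp all short saddle connections are parallel (they lie in a parabolic cylinder decomposition), contradicting the two short nonparallel sides. Alternatively one can quote~\cite[Proposition 6.1]{Vorobets96} as the paper does for the NSG analogue. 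For ``NST $\Rightarrow$ lattice,'' your plan to exhibit a surface in the orbit closure carrying a \emph{degenerate} (zero-area) triangle does not work: three collinear saddle connections do not bound an embedded triangle, so $a=0$ is not witnessed by an actual triangle, and your semicontinuity inequality $a(\eta)\ge a(\om)$ only yields a contradiction if you produce $\eta$ in the closure with genuine triangles of area below $a(\om)$. The step ``accumulation onto a new surface pinches some triangle to zero area'' is precisely the content of the theorem and is asserted rather than proved; this is where Smillie--Weiss do the real work, passing through ``no small \emph{virtual} triangles'' (uniform discreteness of the wedge products of holonomy vectors) and a quantitative nondivergence analysis of the translates $g_t r_\theta\om$. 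As it stands the proposal is a plausible outline with the two decisive steps missing.
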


\medskip

\noindent Combining this theorem with the following lemma completes the proof of Theorem~\ref{theorem:lattice:nsg}. 

\begin{lemma}\label{lemma:nst:nsg} If $\omega$ has no small gaps then it has no small triangles.\end{lemma}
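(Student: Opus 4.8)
The plan is to prove the contrapositive: if $\omega$ fails to have no small triangles, then it fails to have no small gaps. Concretely, I will assume that $\omega$ admits triangles (with vertices at singularities and no singularities in the interior, and hence with the three edges saddle connections) of arbitrarily small area, and produce a sequence $R_n \to \infty$ with $R_n^2\gamma^{\omega}(R_n) \to 0$. This forces $\liminf_{R\to\infty} R^2\gamma^{\omega}(R) = 0$, so that (\ref{eq:gap:lower}) fails and $\omega$ does not have NSG.

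First, using the failure of NST, I would fix for each $n$ a triangle $T_n$ of area $A_n < 1/n$ and let $R_n$ denote the length of its longest side. The first step is to check that $R_n \to \infty$ along a subsequence. This follows from the discreteness of $\La_{\om}$: for any fixed $L$ there are only finitely many saddle connections of length at most $L$, hence only finitely many triangles all of whose sides have length at most $L$, and these finitely many triangles have areas bounded below by a positive constant. Thus a subsequence of the $T_n$ with bounded longest side would have areas bounded away from $0$, contradicting $A_n \to 0$.

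The heart of the argument is a purely Euclidean estimate on a single thin triangle. I would place the longest side, of length $R_n$, along the segment from the origin to $(R_n,0)$; since the longest side subtends the largest angle, both base angles are acute, so the apex projects to an interior point $(d,h)$ of the base, with altitude $h = 2A_n/R_n$. As $\max(d, R_n - d) \ge R_n/2$, one of the two base angles, say the one at the origin when $d \ge R_n/2$, is at most $\arctan(2h/R_n) \le 2h/R_n = 4A_n/R_n^2$. The two sides meeting at that vertex are saddle connections: the base (direction $0$, length $R_n$) and the adjacent leg (length at most $R_n$, since the base is longest). Both therefore have length at most $R_n$, so their directions lie in $\Theta^{\om}_{R_n}$ and differ by at most $4A_n/R_n^2$; consequently $\gamma^{\omega}(R_n) \le 4A_n/R_n^2$. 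Multiplying by $R_n^2$ gives $R_n^2\gamma^{\omega}(R_n) \le 4A_n \to 0$, which completes the contrapositive.

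The step I expect to be the main obstacle is precisely this Euclidean estimate, because a small-area triangle need not be balanced: one side can be far longer than another, and a careless choice of the near-parallel pair of sides would inflate $R^2\gamma$ by the ratio of side lengths. Designating the longest side as the base and reading off the \emph{smaller} of the two base angles is exactly what controls this ratio and keeps the bound at $4A_n$. The only other point requiring care is confirming that the relevant edges are genuine saddle connections of length at most $R_n$, so that their directions indeed appear in $\Theta^{\om}_{R_n}$; this is immediate once the longest side is taken as the base.
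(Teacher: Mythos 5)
Your argument is correct and is essentially the paper's proof run in the contrapositive direction: both rest on the same Euclidean estimate obtained by dropping the altitude onto the longest side (length $R$) and observing that the foot splits it into a piece of length at least $R/2$, so that the smaller base angle is comparable to $4\,(\mathrm{area})/R^2$ and is realized by two saddle connections of length at most $R$. The only extra steps your formulation forces --- checking $R_n \to \infty$ via discreteness of $\La_{\om}$, and subdividing so that the edges are genuine saddle connections (the same reduction the paper makes) --- are handled correctly.
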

\begin{proof} Let $\epsilon >0$ be such that $R^2 \gamma^{\om}_R > \epsilon$. Let $T$ be a triangle on $\omega$ with vertices at singularities and with no singularities in the interior. Without loss of generality we can assume that the sides of $T$ are saddle connections, since if not it can be decomposed into triangles which are. Let $R$ be the length of the longest side. Dropping a perpendicular from the opposite vertex, we decompose the side into two segments, at least one of which has length at least $R/2$. Consider the right triangle formed by the perpendicular and this segment. The angle opposite the perpendicular is an angle between saddle connections of length at most $R$, so it is at least $\frac{\epsilon}{R^2}$. See Figure~\ref{triangle} below.

 \makefig{$\theta \geq \frac{\epsilon}{R^2}$}{triangle}{\begin{picture}(0,0)%
\includegraphics{triangle.pstex}%
\end{picture}%
\setlength{\unitlength}{3947sp}%
\begingroup\makeatletter\ifx\SetFigFont\undefined%
\gdef\SetFigFont#1#2#3#4#5{%
  \reset@font\fontsize{#1}{#2pt}%
  \fontfamily{#3}\fontseries{#4}\fontshape{#5}%
  \selectfont}%
\fi\endgroup%
\begin{picture}(2724,1899)(739,-1948)
\put(1951,-1111){\makebox(0,0)[b]{\smash{{\SetFigFont{6}{7.2}{\rmdefault}{\mddefault}{\updefault}{\color[rgb]{0,0,0}$R$}%
}}}}
\put(2476,-961){\makebox(0,0)[b]{\smash{{\SetFigFont{6}{7.2}{\rmdefault}{\mddefault}{\updefault}{\color[rgb]{0,0,0}$L$}%
}}}}
\put(1201,-286){\makebox(0,0)[b]{\smash{{\SetFigFont{6}{7.2}{\rmdefault}{\mddefault}{\updefault}{\color[rgb]{0,0,0}$\theta$}%
}}}}
\end{picture}%
}

\noindent The length $L$ of the perpendicular is at least $\frac{R}{2}\tan(\frac{\epsilon}{R^2})$, so $L > \frac{R}{2}\frac{\delta}{R^2} = \frac{\delta}{2R}$ for some $\delta > 0$. Thus the area of the triangle is bounded below by $\frac{\delta}{4}$, and so is the area of $T$. Since $T$ was arbitrary, we have the $\omega$ has no small triangles.
\end{proof}

If we replace saddle connections with cylinders, then, as pointed out to us by Barak Weiss, we can construct a non-lattice surface with no small cylinder gaps as follows:  take a branched cover of a torus by varying relative periods (see \S\ref{sec:billiards}). This surface has absolute holonomy in
$\Z^2$ and therefore has no small gaps for vectors which are holonomies of
cylinder core curves. 
%

\subsection{Measure bounds and gaps}\label{subsec:limit:dist}
Let $\hat{G}_2(\Theta_R^{\omega})=\left\{R^2 (\theta_{i+1}-\theta_i)\right\}_{i=1}^{|\Theta_R^{\omega}|}$.
 Let $\nu_2^{\omega}(R)$ be the probability measure obtained by normalizing the measure given by delta mass at
 each element of $\hat{G}_2(\Theta_R^{\omega})$.
\begin{Prop}\label{lim distr} For almost every surface $\omega$ the measure $\nu_2^{\omega}(R)$ converges (as $R \rightarrow \infty$) in the weak-* topology.
\end{Prop}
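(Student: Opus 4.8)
The plan is to reduce the gap distribution to the \emph{void statistic} $\lambda(\theta:N^\theta_R(\om,\sigma,0)=0)$, which Theorem~\ref{theorem:wedge:ae} controls (with $k=0$, $c=0$), and then to pass to the limit through an \emph{integrated} identity that never differentiates the void function, so that plain pointwise a.e.\ convergence plus dominated convergence suffice.

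First I would record an elementary combinatorial identity for the point set $\Theta_R^{\om}=\{\theta_1<\cdots<\theta_{\tilde N}\}$ on the circle. Writing $g_i=\theta_{i+1}-\theta_i$ for the gaps and
$$V_R(w):=\lambda\bigl(\theta:(\theta-\tfrac{w}{2},\theta+\tfrac{w}{2})\cap\Theta_R^{\om}=\emptyset\bigr)$$
for the void probability of an arc of angular width $w$, translation invariance of $\lambda$ gives $2\pi V_R(w)=\sum_{i:g_i>w}(g_i-w)$, a convex, non-increasing function of $w$ whose derivative is $-\frac{1}{2\pi}|\{i:g_i>w\}|$. Since an arc of width $w=2\sigma/R^2$ misses $\Theta_R^{\om}$ exactly when it contains no holonomy vector of length $\le R$, one has $V_R(2\sigma/R^2)=\lambda(\theta:N^\theta_R(\om,\sigma,0)=0)$.

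Next, for a test function $f\in C^2_c([0,\infty))$ I would substitute $\tau=R^2w$ and integrate by parts using the identity above to obtain
$$\int f\,d\nu_2^{\om}(R)=f(0)+\frac{R^2}{\tilde N(\om,R)}\left(2\pi f'(0)+2\pi\int_0^\infty f''(\tau)\,V_R(\tau/R^2)\,d\tau\right).$$
The decisive feature is that $V_R$ appears \emph{undifferentiated}. By Theorem~\ref{theorem:wedge:ae} with $k=0$, $c=0$, for $\mu$-a.e.\ $\om$ we have $V_R(2\sigma/R^2)\to p_0(\sigma)=\mu(\om':\La_{\om'}\cap T(\sigma)=0)$ for each fixed $\sigma$; intersecting the countably many full-measure sets over a dense set of $\sigma$ and using monotonicity of $V_R$ and of $p_0$ in $\sigma$, I would upgrade this, on a single full-measure set of surfaces, to $V_R(\tau/R^2)\to\mathcal V(\tau):=p_0(\tau/2)$ for Lebesgue-a.e.\ $\tau$. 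Since $0\le V_R\le 1$ and $f''$ is compactly supported, dominated convergence yields $\int_0^\infty f''V_R(\tau/R^2)\,d\tau\to\int_0^\infty f''(\tau)\mathcal V(\tau)\,d\tau$. Combined with the quadratic asymptotics $\tilde N(\om,R)/R^2\to c_{\om}>0$ (the analogue for distinct directions of the Eskin--Masur count), this shows $\int f\,d\nu_2^{\om}(R)$ converges for every $f\in C^2_c$; by density of $C^2_c$ in $C_c$ together with the uniform bound $\nu_2^{\om}(R)(\,\cdot\,)=1$, the measures $\nu_2^{\om}(R)$ converge weak-$*$, the limiting positive functional being represented by a measure via Riesz (with survival function $-\frac{2\pi}{c_{\om}}\mathcal V'$, well defined a.e.\ since $\mathcal V$ is convex as a limit of convex functions).

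The main obstacle is arranging a single exceptional null set on which the convergence holds for \emph{all} window sizes at once, together with enough regularity of the limit $\mathcal V$: Theorem~\ref{theorem:wedge:ae} is an a.e.\ statement for each fixed $\sigma$, and the monotonicity argument is precisely what avoids an uncountable union of null sets while also furnishing a.e.-$\tau$ convergence for the dominated-convergence step. A secondary technical point is the asymptotic $\tilde N(\om,R)/R^2\to c_{\om}$ for the count of \emph{distinct} directions (rather than holonomy vectors), and the bookkeeping of mass possibly escaping to $0$ (reflecting the small gaps of Theorem~\ref{theorem:main:gap}) or to $\infty$; I would handle the latter by working on the compactification $[0,\infty]$, so that the weak-$*$ limit exists as a sub-probability measure in any case.
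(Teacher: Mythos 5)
Your argument is correct and follows essentially the same route as the paper: the paper deduces the proposition from the existence of the void statistic $p_0(\sigma)$ (Theorem~\ref{theorem:wedge:ae} with $k=0$) together with quadratic growth, citing \cite[Theorem 2.1]{MS}, and identifies the limit via $\nu_{\infty}([a,b])=\frac{d}{d\sigma}p_0(\sigma)|_{a}-\frac{d}{d\sigma}p_0(\sigma)|_{b}$, which is exactly the content of your survival function $-\frac{2\pi}{c_{\om}}\mathcal{V}'$. The only difference is that you carry out by hand (via the void-statistic identity, integration by parts, and the monotonicity argument) the step the paper outsources to Marklof--Strombergsson, while relying, as the paper does, on the exact quadratic asymptotics of $\tilde N(\om,R)$.
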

\begin{proof} The existence of $p_0(\sigma)$ and the quadratic growth of saddle connections
implies this by \cite[Theorem 2.1]{MS}. In particular, after 
unwinding the definitions there we have 
$\nu_{\infty}([a,b])=\frac {d} {d\sigma} p_0(\sigma)|_{a}-\frac d {d\sigma}p_0(\sigma)|_b$. 
Note that this makes sense for almost every $\sigma$ because $p_0(\sigma)$ is a decreasing function of $\sigma$.
\end{proof}

\section{Equidistribution on strata}\label{sec:axiom}

In this section, we prove Theorem~\ref{theorem:wedge:ae} and and Theorem~\ref{theorem:wedge:limit}. We follow the strategy outlined in~\cite{EskinMasur} for proving results on the asymptotics of $N(\omega, R)$ and modify the techniques to our situation.

\subsection{Cones in $\R^2$}\label{subsec:cones}

The crucial geometric observation in the proof of Theorems~\ref{theorem:wedge:ae} and~\ref{theorem:wedge:limit} is the following. Let $R>>0$, and $t = 2 \log R$. Then 
$$r_{\theta} g_{-t} T(c, \sigma) \approx A^{\theta}_R(c, \sigma)$$
\noindent This can be seen as follows: $g_{-t} T(c, \sigma)$ is a trapezoid with vertices at $(cR, \pm c \sigma /R), (R, \pm \sigma/ R)$. Rotating it by angle $\theta$, we have that $r_{\theta} g_{-t} T(c, \sigma)$ is a thin trapezoidal wedge around the set line $\{\vv \in \R^2: \arg(\vv) = \theta\}$, with vectors of length roughly  between $cR$ and $R$ (there is an error of up to $ \frac 1 R$ because $r_{\theta}g_{-t}T(c,\sigma)$ is a trapezoid and not a wedge of an annulus). Finally, the angular width of this wedge is roughly $c/R^2$, since for small angles $\phi$ the slope $\tan(\phi) \approx \phi$. Thus, for any $\omega \in \hh$, $R >>0$,
\begin{equation}\label{eq:lambda:wedge}|\Lambda_{\om} \cap A^{\theta}_R(c, \sigma)| \approx |g_{t} r_{-\theta} \Lambda_{\om} \cap T(c, \sigma)|.\end{equation}

\noindent For $k \in \N$, let $f_k: \hh \rightarrow [0, 1]$ be the indicator function of the set 
\begin{equation}\label{eq:level:set}\hh_{c, \sigma, k} : = \{\omega \in \hh: |\Lambda_{\om} \cap T(c, \sigma)| = k\}.\end{equation}

\noindent Using (\ref{eq:lambda:wedge}), we can write
\begin{equation}\label{eq:circle:relation}
 \lambda(\theta: N^{\theta}_R(\omega, \sigma, c) = k) \approx \int_0^{2\pi} f_k(g_t r_{-\theta} \omega) d\lambda(\theta) =  \int_{\hh} f_k d\nu_{t,\omega}.
\end{equation}
\noindent Here, $\approx$ denotes that the difference goes to $0$ as $R \rightarrow \infty$. We will rigorously justify (\ref{eq:circle:relation}) below.
\subsection{Equidistribution}\label{subsec:equi} Equation (\ref{eq:circle:relation}) reduces proving Theorem~\ref{theorem:wedge:ae} and Theorem~\ref{theorem:wedge:limit} to understanding 
$$\lim_{t \rightarrow \infty} \int_{\hh} f_k d\nu_{t,\om}.$$We first prove Theorem~\ref{theorem:wedge:limit} assuming (\ref{eq:circle:relation}):

\noindent\textbf{Proof of Theorem~\ref{theorem:wedge:limit}:} While there is no general pointwise ergodic theorem known for the measures $\nu_{t,\om}$, our functions $f_k$ are indicator functions of level sets of the Siegel-Veech transform of the indicator function $h$ of $T(c, \sigma)$ (see \S\ref{subsubsec:SV} below). Following~\cite[\S4]{EskinMasur}, there is an approximation argument that allows us to conclude that if $\nu_{t,\om_0} \rightarrow \mu_0$, we have, as desired
$$\lim_{t \rightarrow \infty} \int_{\hh} f_k d\nu_{t, \om_0} =  \int_{\hh} f_k d\mu_0 = \mu_0(\om \in \hh: \La_{\om} \cap T(c, \sigma) = k).$$
\noindent\qed\medskip

\noindent\textbf{Proof of Theorem~\ref{theorem:wedge:ae}:} Combine Theorem~\ref{theorem:wedge:limit} with~\cite[Proposition 3.3]{EskinMasur}.
\qed\medskip

\subsubsection{Proof of (\ref{eq:circle:relation})}\label{sec:circle:relation}

Let $F_t$ denote the indicator function of $g_{t} r_{-\theta} A^{\theta}_R(c, \sigma)$. By construction, this does \emph{not} depend on $\theta$, as it is the $g_{t}$ image of a horizontal cone. As above, let $h$ denote the indicator function of $T(c, \sigma)$. We abbreviate $\nu_{t, \om_0}$ by $\nu_t$. Denote  the symmetric difference of $g_t r_{-\theta} A^{\theta}_R(c, \sigma)$ and $T(c, \sigma)$ by $E_t$, and note that $E_s \subset E_t$ for $s > t$, and the volume of $E_t$ tends to zero uniformly in $\theta$ as $t \rightarrow \infty$. Also note that by construction, $$\widehat{F_t}(g_{t} r_{-\theta} \omega) = N^{\theta}_R(\omega, \sigma, c),$$ and $f_k$ is the indicator function of the level set $\{\omega: \widehat{h}(\omega) = k\}$.  Thus we would like to show that, given any basepoint $\omega_0$, $$\lim_{t \rightarrow \infty} \nu_t (\omega: \widehat{F_t} (\omega) = k) = \lim_{t \rightarrow \infty} \nu_t(\omega: \widehat{h}(\omega) = k).$$ Recall that for any set $L \subset \hh$, $$\nu_t(\omega: \omega \in L) = \lambda(\theta: g_{-t} r_{-\theta} \omega_0 \in L).$$

\noindent Let $I_t = \{\om: \widehat{F_t}(\omega) \neq \widehat{g}(\om)\}$. Letting $b_0$ denoting the Siegel-Veech constant (see Theorem~\ref{theorem:sv:formula}) of $\mu_0$, we have $$\mu_0(I_t) < b_0 m(E_t),$$where $m$ denotes Lebesgue measure on $\R^2$. Also note that $I_s \subset I_t$ for $s > t$. Fix $\epsilon >0$. Let $t_0 >0$ be such that $m(E_t) < \epsilon/C_0$ for all $t > t_0, \theta \in [0, 2\pi)$, so $$\mu_0(I_t) < \epsilon.$$

\noindent Since $\nu_t \rightarrow \mu_0$, we can pick $t_1 >t_0$ so that for all $t > t_1$, $$\nu_t (I_t) \le \nu_t(I_{t_0}) \le 2 \epsilon.$$

\noindent Write $$\nu_t(\omega: \widehat{F_t}(\om) = k ) = \nu_t(\omega \notin I_t: \widehat{F_t} = k) + \nu_t(\omega \in I_t: \widehat{F_t}(\om) = k)$$

\noindent The second term can be bounded above by $\nu_t(I_t)$ and thus by $2 \epsilon$. The first term can be rewritten as 
$$\nu_t(\omega: \widehat{g} (\om) = k) - \nu_t(\om \in I_t: \widehat{g}(\om) = k)$$

\noindent Again using our bound on $\nu_t(I_t)$,  the difference $$|\nu_t(\omega: \widehat{F_t}(\om) = k ) -\nu_t(\omega: \widehat{g} (\om) = k)|$$ can be bounded by $4 \epsilon$. Passing to the limit, we obtain that the limits can differ by no more than $4 \epsilon$. Since $\epsilon$ was arbitrary, we have that the limits must be equal.  \qed\medskip

\subsubsection{The Siegel-Veech formula}\label{subsubsec:SV} Using the Siegel-Veech transform, we can obtain results on the \emph{expected} number of points in a thinning wedge. We fix some notation. Recall that for a bounded compactly supported function $f: \R^2 \backslash \{0\} \rightarrow \R$, we define $\widehat{f}(\om) = \sum_{\vv \in \La_{\om}} f(\vv)$. Fixing $R, c, \sigma$, let $h_{\theta} = \chi_{A^{\theta}_R(c, \sigma)}$, that is, it is the indicator function of the thinning wedge. The expected number of lattice points in a random thinning wedge can be written as

$$\sum_{k=1}^{\infty} k  \lambda(\theta: N^{\theta}_R(\om, \sigma, c) = k) = \int \widehat{h_{\theta}}d\lambda(\theta).$$

\noindent Writing $h$ for the indicator function of $T(c, \sigma)$, and using (\ref{eq:lambda:wedge}) we can write this (for $t>>0$) as 
$$\int_{\hh} \widehat{h} d\nu_{t,\omega}.$$

\noindent If $\omega_0$ has a circle limit measure $\mu_{0}$ satisfying a certain technical condition~\cite[Theorem 8.2(D)]{EMM}, then 
$$\lim_{t \rightarrow \infty} \int_{\hh} \widehat{h} d\nu_{t,\omega} = \int_{\hh} \widehat{h} d\mu_0.$$

\noindent As above, let $b_0$ be the Siegel-Veech constant for $\mu_0$. Then we have
\begin{equation}\label{eq:siegel:veech}\int_{\hh} \widehat{h} d\mu_0 = b_0 \int_{R^2} h.\end{equation}

\noindent That is, the expected number of points in a thinning wedge is proportional to the volume of the trapezoid $T(c, \sigma)$. We will see in \S\ref{subsec:second:moment} that the \emph{second moment} of the limiting distribution is \emph{not} well-defined.

\subsubsection{Fiber bundles and special trajectories}\label{subsubsec:fiber} In this paper, we focus on the sets of holonomy vectors of oriented saddle connections or cylinders. Our results will also apply to the sets of holonomy vectors connecting a fixed point on the surface to singular points or the set of vectors connecting two marked points. These can be obtained by considering the corresponding equidistribution results for the spaces $Y_i$ of translations surfaces with $i$ marked points, $i=1, 2$. These spaces can be viewed as fiber bundles over $\hh$ with fiber over $\omega \in \hh$ given by $(M, \omega)^i$. We refer the interested reader to~\cite[\S2, \S9]{EskinMasur} for details.

\section{Measure bounds and gap distribution}\label{sec:measure:bounds}
This section provides a variety of results on the gaps between saddle connection and cylinder directions. It includes results on the likelihood of finding many saddle connections in a small region. In particular,  Lemma \ref{k asym} says that having $k$ saddle connections in a small interval is proportional to having two saddle connections in a small interval. As a corollaries we show Corollary \ref{cor:second:moment} which says that the limiting distribution of $p_k(\sigma)$ does not have finite second moments and Corollary \ref{SV not L2} which says that the Siegel-Veech transform does not send continuous compactly supported functions to $L_2$ (though it is norm preserving for positive functions as a map from $L_1$ to $L_1$). We also show Theorem \ref{p sig 0 lower} which says that some gaps between cylinder directions are larger than one would expect .
\subsection{Notation}\label{subsec:not}

In this section, we will need to distinguish between holonomy vectors of saddle connections and cylinders. Let $\omega \in \hh$, let $\La^{sc}_{\om}$ and $\La^{cyl}_{\om}$ be as in (\ref{eq:la:def}), and let $\mu$ be Lebesgue measure on $\hh$. We define

\begin{eqnarray}\label{eq:tildepksigma:def} p_k(\sigma)  :&=& \mu(\omega: \La^{sc}_{\om} \cap T(\sigma) = k) \\ \nonumber \tilde{p}_k(\sigma) :&=& \mu(\omega: \La^{sc}_{\om} \cap T(\sigma) \geq k)\\ \nonumber p^{cyl}_k(\sigma)  :&=& \mu(\omega: \La^{cyl}_{\om} \cap T(\sigma) = k) \\ \nonumber \tilde{p}^{cyl}_k(\sigma) :&=& \mu(\omega: \La^{cyl}_{\om} \cap T(\sigma) \geq k).\\ \nonumber \end{eqnarray}

\subsection{$\sigma \rightarrow 0$ asymptotics}\label{subsec:sigma:0}
\begin{Prop}$ \underset{\sigma \to 0}{\lim}\, p_{\sigma,1}=0$. 
\end{Prop}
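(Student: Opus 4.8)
The quantity in question is $p_1(\sigma) = \mu(\om : |\La^{sc}_\om \cap T(\sigma)| = 1)$, with $T(\sigma) = T(0,\sigma)$. The plan is to bound this by the \emph{expected} number of saddle connection holonomy vectors landing in $T(\sigma)$ and to show that this expectation already vanishes as $\sigma \to 0$. First I would record the elementary geometric fact that, with $c = 0$, the region $T(\sigma)$ degenerates to the triangle with vertices $(0,0)$, $(1,\sigma)$, $(1,-\sigma)$, whose Lebesgue measure is $m(T(\sigma)) = \sigma$. Thus the target region shrinks linearly in $\sigma$, and the whole argument is really a first-moment computation on a region of area $\sigma$.

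Next I would invoke the Siegel-Veech formula (Theorem~\ref{theorem:sv:formula}) with $\eta = \mu$, the natural ergodic $SL(2,\R)$-invariant probability measure on $\hh$. Writing $b = b(\mu)$ for the associated Siegel-Veech constant and noting that $\widehat{\chi_{T(\sigma)}}(\om) = |\La^{sc}_\om \cap T(\sigma)|$, the formula gives
$$\int_\hh |\La^{sc}_\om \cap T(\sigma)|\, d\mu(\om) = b\, m(T(\sigma)) = b\sigma.$$
Since this integrand is a nonnegative integer-valued function equal to $1$ on the set $\{\om : |\La^{sc}_\om \cap T(\sigma)| = 1\}$, a one-line first-moment (Markov) estimate yields
$$p_1(\sigma) \le \tilde p_1(\sigma) = \mu(\om : |\La^{sc}_\om \cap T(\sigma)| \ge 1) \le \int_\hh |\La^{sc}_\om \cap T(\sigma)|\, d\mu = b\sigma,$$
and letting $\sigma \to 0$ finishes the proof.

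The only point requiring care — and the hard part — is that the Siegel-Veech formula as stated holds for $f \in C_0^\infty(\R^2)$, whereas $\chi_{T(\sigma)}$ is merely the indicator of a region whose boundary is not smooth and which has a corner at the origin. I would remove this issue by a standard monotone approximation: sandwich $\chi_{T(\sigma)}$ between continuous compactly supported functions $f^-_\epsilon \le \chi_{T(\sigma)} \le f^+_\epsilon$ supported away from $0$ on the annular truncations $T(\sigma) \cap \{|\vv| \ge 1/n\}$, apply the formula to each, and pass to the limit. Discreteness of $\La^{sc}_\om$ guarantees that $\widehat{\chi_{T(\sigma)}}$ is finite pointwise, and monotone convergence upgrades the sandwiched identities to $\int_\hh \widehat{\chi_{T(\sigma)}}\, d\mu = b\sigma$; in particular this certifies that the left-hand integral is finite, which is all the Markov step actually needs. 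I would also note that the cruder Masur–Smillie bound of Lemma~\ref{lemma:short:saddle:connections} does \emph{not} suffice here, since a vector in $T(\sigma)$ may have length close to $1$ rather than being short; its quadratic-in-length control is replaced by the sharp linear-in-area control coming from the Siegel-Veech formula.
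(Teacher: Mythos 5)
Your proof is correct and is essentially the paper's argument: the authors' one-line proof (``the volume of $T(\sigma)\to 0$'') is precisely the first-moment bound $\mu(|\La_\om\cap T(\sigma)|\ge 1)\le \int_{\hh}\widehat{\chi_{T(\sigma)}}\,d\mu = b\,m(T(\sigma)) = b\sigma$ that you spell out via the Siegel--Veech formula and Markov's inequality. The approximation details you supply for applying the formula to an indicator function are standard (and carried out in \cite{EskinMasur}), so nothing is missing.
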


\begin{proof} This follows from the fact that the volume of $T(\sigma) \rightarrow 0$ as $\sigma \rightarrow 0$.\end{proof}

\begin{Theorem}\label{sig to 0} As $\sigma \rightarrow 0$,
$\sigma^{-2} p_{\sigma,2}$ is bounded away from 0 and infinity.
\end{Theorem}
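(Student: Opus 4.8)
The plan is to first exploit the $SL(2,\R)$-invariance of $\mu$ to replace the long thin trapezoid $T(\sigma)$ by a region of \emph{small diameter}, so that the Masur--Smillie estimate (Lemma~\ref{lemma:short:saddle:connections}) becomes applicable. Since $\La_{g\om}=g\La_{\om}$ for $g\in SL(2,\R)$ and $\mu$ is $SL(2,\R)$-invariant, for every such $g$ we have $p_{\sigma,2}=\mu(\om:|\La^{sc}_{\om}\cap gT(\sigma)|=2)$. Taking $g=g_\sigma:=\mathrm{diag}(\sqrt{\sigma},1/\sqrt{\sigma})$, one computes that $g_\sigma T(\sigma)$ is the triangle with vertices $(0,0)$ and $(\sqrt{\sigma},\pm\sqrt{\sigma})$; call it $W_\sigma$. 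Thus $W_\sigma$ is a wedge of \emph{fixed} angular width $\pi/2$ but radius $\rho:=\sqrt{2\sigma}\asymp\sqrt{\sigma}\to 0$, and $p_{\sigma,2}=\mu(\om:|\La^{sc}_{\om}\cap W_\sigma|=2)$. In particular a vector lies in $W_\sigma$ only if it has length at most $\rho$, so points of $\La^{sc}_{\om}$ in $W_\sigma$ correspond to \emph{short} saddle connections. I will establish $p_{\sigma,2}\le C\sigma^2$ and $p_{\sigma,2}\ge c\sigma^2$ in this rescaled picture.

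For the upper bound, recall that the holonomy $\vv_\gamma=\int_\gamma\om$ depends only on the class of $\gamma$ in relative homology, so two \emph{distinct} vectors of $\La^{sc}_{\om}$ are represented by two \emph{nonhomologous} saddle connections. Hence the event $\{|\La^{sc}_{\om}\cap W_\sigma|\ge 2\}$ forces the existence of two nonhomologous saddle connections, each of length at most $\rho$. By Lemma~\ref{lemma:short:saddle:connections} (with $\epsilon=\kappa=\rho$) this set has measure at most $M\rho^4=4M\sigma^2$, so $p_{\sigma,2}\le\mu(|\La^{sc}_{\om}\cap W_\sigma|\ge 2)\le 4M\sigma^2$, giving $\sigma^{-2}p_{\sigma,2}$ bounded above.

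For the lower bound I would construct, near a suitable $\om_*\in\hh$ whose systole is $\gg\rho$, a set of surfaces of measure $\gtrsim\rho^4\asymp\sigma^2$ on which \emph{exactly} two holonomy vectors lie in $W_\sigma$. Choose a basis of $H_1(\Sigma_g,\Sigma;\Z)$ consisting of saddle connections and including $\gamma_1,\gamma_2$; in the associated period coordinates the holonomies $\vv_{\gamma_1},\vv_{\gamma_2}$ are free $\C$-coordinates. Reset them to vectors $\uu_1,\uu_2$ ranging over a subregion of $W_\sigma$ (say lengths in $[\rho/2,\rho]$ and angles near $\pm\pi/8$), while keeping the remaining coordinates close to their values at $\om_*$. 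By the lower bound in the remark following Lemma~\ref{lemma:short:saddle:connections}, this coordinate box has measure at least $m\cdot\mathrm{area}(W_\sigma)^2\gtrsim\rho^4\asymp\sigma^2$, and on it $\uu_1,\uu_2\in\La^{sc}_{\om}\cap W_\sigma$.

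The main obstacle is to guarantee that on (a positive proportion of) this set there are \emph{no further} vectors in $W_\sigma$. This is genuinely delicate: because the Siegel--Veech transform $\widehat{h}$ of $h=\chi_{T(\sigma)}$ is not square-integrable (Corollary~\ref{SV not L2}), the measure of $\{|\La^{sc}_{\om}\cap W_\sigma|\ge 3\}$ need not be negligible compared with that of $\{\ge 2\}$, so I cannot simply argue that triples are a lower-order term. Instead I would argue directly: every saddle-connection holonomy is a $\Z$-combination of the basis holonomies; any combination using a coordinate other than $\gamma_1,\gamma_2$ has length $\gtrsim\mathrm{sys}(\om_*)\gg\rho$ and so cannot meet $W_\sigma$, while among the combinations $a\uu_1+b\uu_2$ one checks, for the chosen ranges of $\uu_1,\uu_2$, that everything other than $\uu_1,\uu_2$ is either too long ($|\uu_1+\uu_2|>\rho$, $|2\uu_i|>\rho$) or points outside the $\pi/2$-cone (e.g.\ $\uu_1-\uu_2$ is nearly vertical). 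Making this rigorous requires choosing $\om_*$ generic enough that resetting the two coordinates creates no other short saddle connection, which is where a transversality/configuration analysis in the spirit of~\cite{EMZ} enters; it suffices to extract a \emph{definite proportion} of exactly-two surfaces, since then $p_{\sigma,2}\ge c\rho^4\gtrsim\sigma^2$, completing the lower bound and hence the theorem.
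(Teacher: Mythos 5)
Your proposal is essentially the paper's own proof: the authors apply the same renormalization (they write it as $g_{-\sqrt{\sigma}}T(c,\sigma)$, i.e.\ the diagonal matrix sending $T(\sigma)$ to a wedge of diameter $\asymp\sqrt{\sigma}$), invoke $SL(2,\R)$-invariance of $\mu$, and then quote Lemma~\ref{lemma:short:saddle:connections} for the upper bound and the remark following it for the lower bound, exactly as you do. The only difference is that the paper contents itself with the estimate $\mu(\{\om:\La_{\om}\cap T(\sigma)\geq 2\})\sim\sigma^2$ and does not address the ``exactly two'' refinement that occupies your final paragraph (and which is indeed the only genuinely delicate point, though it is not needed for the way the result is used elsewhere in the paper).
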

\begin{proof}
Consider $g_{-\sqrt{\sigma}} T(c,\sigma)$. Because the action of
$g_t$ preserves $\mu$ on $\mathcal{H}$ it follows that the measure
of surfaces with two saddle connections $T(c, \sigma)$ is equal to
the measure of surfaces with two saddle connections in
$g_{-\sqrt{\sigma}}T(c,\sigma)$. It follows from  Lemma
\ref{lemma:short:saddle:connections} and the remark following it that $$\mu(\{\omega:
\Lambda_{\omega}\cap T(c,\sigma)\geq 2\}) \sim \sqrt{\sigma}^2\sqrt{\sigma}^2=\sigma^2,$$ where $\sim$ denotes proportionality.
\end{proof}

This result states that given a saddle connection appearing in a wedge the probability of having another one is roughly independent. In general this is false, as the next section shows the probability of having many saddle connections in a small wedge decays slowly.

\subsection{More on $\sigma \rightarrow 0, k \rightarrow \infty$ }
First,
\begin{lemma} \label{2k above} For any fixed $\sigma>0$ we have $\underset{k \to \infty}{\limsup} \, k^2 \tilde{p}^{\text{cyl}}_k(\sigma)<\infty$.
\end{lemma}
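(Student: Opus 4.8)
The plan is to deduce the quadratic tail bound from a second-order measure estimate for \emph{thin} cylinders, the key ingredient absent in the saddle-connection case being the area constraint: cylinders have disjoint interiors and total area at most $1$. First I would record an elementary pigeonhole step. If $\La^{cyl}_{\om}\cap T(\sigma)$ contains at least $k$ cylinders, then since at most $\lfloor k/2\rfloor$ of them can have area exceeding $2/k$ (otherwise their areas would sum to more than $1$), at least $\lceil k/2\rceil$ of them have area at most $2/k$. In particular, for $k$ large there are at least two distinct cylinders $C_1,C_2$ with core holonomy in $T(\sigma)$ and $\mathrm{area}(C_i)\le 2/k$; moreover, since at most a bounded (stratum-dependent) number of disjoint cylinders can share a given core holonomy vector, we may take $C_1,C_2$ to have distinct holonomy, hence non-homologous core curves. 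This yields the containment
\[
\{\omega : |\La^{cyl}_{\om}\cap T(\sigma)|\ge k\}\ \subset\ \{\omega : \omega \text{ has two non-homologous cylinders, cores in } T(\sigma),\ \text{each of area}\le 2/k\}.
\]

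Next I would prove that the right-hand event has $\mu$-measure $O(a^2)$ with $a=2/k$, in the spirit of Lemma~\ref{lemma:short:saddle:connections} and the remark following it, but carried out for thin (rather than short) cylinders. For a single cylinder $C$ one builds period coordinates in which two basis cycles are the core $\gamma$ (with $\vv_\gamma\in T(\sigma)$) and a transverse cycle crossing $C$ (with holonomy $\uu$); the area of $C$ equals $\|\vv_\gamma\|$ times the height $h$, where $h$ is the component of $\uu$ perpendicular to $\vv_\gamma$. Fixing $\vv_\gamma$ and integrating, the twist (parallel component of $\uu$) ranges over an interval of length $\sim\|\vv_\gamma\|$ while the constraint $\mathrm{area}\le a$ forces $h\in(0,a/\|\vv_\gamma\|]$, so the transverse period contributes measure $\sim\|\vv_\gamma\|\cdot(a/\|\vv_\gamma\|)=a$, uniformly in $\vv_\gamma$; integrating over $\vv_\gamma\in T(\sigma)$ and the remaining free coordinates gives $\mu(\exists\,\text{cyl, core}\in T(\sigma),\ \text{area}\le a)=O(a)$. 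For two non-homologous cylinders the two core–transverse pairs give independent coordinates, and imposing both height constraints simultaneously multiplies the estimate to $O(a^2)$. Summing over the locally finitely many homology configurations realizing such a pair (the standard covering argument) preserves the bound.

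Combining the two steps with $a=2/k$,
\[
\tilde{p}^{\text{cyl}}_k(\sigma)=\mu\bigl(|\La^{cyl}_{\om}\cap T(\sigma)|\ge k\bigr)\le \mu\bigl(\text{two non-homologous cylinders in } T(\sigma)\text{ of area}\le 2/k\bigr)=O\bigl(k^{-2}\bigr),
\]
so that $\limsup_{k\to\infty}k^2\tilde{p}^{\text{cyl}}_k(\sigma)<\infty$, as claimed.

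I expect the main obstacle to be the thin-cylinder measure estimate itself. The subtlety is that a thin cylinder need not contain any \emph{short} saddle connection: its boundary saddle connections have length comparable to the core (which ranges up to $\sqrt{1+\sigma^2}$), and a crossing saddle connection also has length of order the core length, so Lemma~\ref{lemma:short:saddle:connections} cannot be invoked as a black box. One must instead verify directly that constraining a cylinder's area constrains exactly one transverse period coordinate, and that for two non-homologous cylinders these constraints are genuinely independent, so that the measures multiply. This is precisely the feature distinguishing cylinders from saddle connections: for saddle connections there is no area to exploit, the analogous pair count (second moment) diverges (cf.\ Corollary~\ref{cor:second:moment}), and no such quadratic tail bound is available.
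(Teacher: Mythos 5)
Your overall strategy --- pigeonhole to produce two thin cylinders, then a second-order period-coordinate estimate in the style of Lemma~\ref{lemma:short:saddle:connections} --- is exactly the argument the paper intends (its stated proof is only the sentence ``This is similar to the proof of Theorem~\ref{sig to 0}'', so you have in fact supplied more detail than the authors do). However, your first step rests on a false statement: distinct cylinders on a translation surface do \emph{not} have pairwise disjoint interiors, and their areas need not sum to at most $1$. Cylinders in different directions can overlap --- on a flat torus every cylinder is the entire surface, and on a general surface two cylinders of substantial area whose core directions are close but unequal can share points. So the justification ``otherwise their areas would sum to more than $1$'' fails, and the containment you build the proof on is not established as written.

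The repair is the paper's own Lemma~\ref{cyl flow}: if a point lies in two distinct cylinders of length at most $R_\sigma=\sqrt{1+\sigma^2}$ in distinct directions, then (since each has area at most $1$) those directions differ by at least $\min\{1/(2R_\sigma^2),\pi/3\}$; as all cylinders in your family have direction within $\arctan\sigma$ of the horizontal, and there are boundedly many cylinders in any single direction, each point of the surface lies in at most $D(\sigma)<\infty$ members of the family. Hence $\sum_i \mathrm{area}(C_i)\le D(\sigma)$, and the pigeonhole yields at least $k/2$ cylinders of area at most $2D(\sigma)/k$; since $\sigma$ is fixed this costs only a constant and the remainder of your argument is unaffected. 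One further caveat: the independence of the two cross-curve constraints for two non-homologous thin cylinders, which you correctly identify as the crux of the $O(a^2)$ measure estimate, is asserted rather than proved; the paper gives no more detail on this point either, but it still requires the basis-completion and covering argument of \cite{MasurSmillie} carried out for core/cross-curve pairs rather than for short saddle connections.
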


This is similar to the proof of Theorem \ref{sig to 0}. We say a surface has an $\epsilon$\emph{-thin neck} if there exists a pair of saddle connections $\vv$
 and $\ww$ such that $|\vv| \leq \epsilon$ and $\ww$, $\vv$, $\ww$
  are adjacent saddle connections. See Figure~\ref{thin neck} below.   
  \makefig{An $\epsilon$-thin neck, i.e., $|\vv| < \epsilon$.}{thin neck}{\begin{picture}(0,0)%
\includegraphics{thin_neck.pstex}%
\end{picture}%
\setlength{\unitlength}{3947sp}%
\begingroup\makeatletter\ifx\SetFigFont\undefined%
\gdef\SetFigFont#1#2#3#4#5{%
  \reset@font\fontsize{#1}{#2pt}%
  \fontfamily{#3}\fontseries{#4}\fontshape{#5}%
  \selectfont}%
\fi\endgroup%
\begin{picture}(1752,579)(4861,-2503)
\put(4876,-2236){\makebox(0,0)[b]{\smash{{\SetFigFont{6}{7.2}{\rmdefault}{\mddefault}{\updefault}{\color[rgb]{0,0,0}$\vv$}%
}}}}
\put(5626,-2011){\makebox(0,0)[b]{\smash{{\SetFigFont{6}{7.2}{\rmdefault}{\mddefault}{\updefault}{\color[rgb]{0,0,0}$\ww$}%
}}}}
\put(5626,-2461){\makebox(0,0)[b]{\smash{{\SetFigFont{6}{7.2}{\rmdefault}{\mddefault}{\updefault}{\color[rgb]{0,0,0}$\ww$}%
}}}}
\end{picture}%
}

  If $\vv$, $\ww$ define a $\frac{\sigma}{3k}$ thin neck in $\omega$ and 
  $\frac 1 3 \leq |\ww| \leq \frac 2 3$ then $N_1^{\arg (\ww)}(\omega,\sigma,0)\geq k$.
   To see this notice that a saddle connection that goes once in the $\ww$ direction and $l$ times in the $\vv$
    direction has its associated vector in $\Lambda_{\omega}$ contained in  $A_1^{\arg(\ww)}(0,\sigma)$. 
\begin{lemma}\label{k asym} For any fixed $\sigma >0$ we have $ k^2 \tilde{p}_k(\sigma)$ is bounded away from 0 and $\infty$.
\end{lemma}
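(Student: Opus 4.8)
The plan is to prove the two inequalities separately, keeping $\sigma$ fixed so that all of its dependence is swallowed by the constants. Recall that $T(\sigma)=T(0,\sigma)$ is the thin triangle with vertices $(0,0),(1,\pm\sigma)$, i.e.\ a narrow cone of angular half-width $\approx\sigma$ and radius $1$; thus $|\La^{sc}_{\om}\cap T(\sigma)|\ge k$ says that $\om$ has at least $k$ saddle connections of length $\le 1$ whose directions lie in a $\sigma$-wedge about the horizontal. The governing heuristic, already visible in the thin-neck discussion preceding the statement, is that the only efficient way to pack many saddle connections into such a narrow cone is to have a single short saddle connection $\vv$ spawning an arithmetic progression $\ww,\ww+\vv,\ldots,\ww+(k-1)\vv$; both bounds then reduce to estimating the measure of the surfaces carrying such a short $\vv$, which is governed by Lemma~\ref{lemma:short:saddle:connections}.

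For the lower bound I would combine the thin-neck implication established just above with the measure estimate of Lemma~\ref{lemma:short:saddle:connections}, following the proof of Theorem~\ref{sig to 0}. Concretely, I would build surfaces carrying a $\tfrac{\sigma}{3k}$-thin neck whose generators $\vv,\ww$ both point nearly horizontally, with $|\vv|\le\tfrac{\sigma}{3k}$ and $|\ww|\in[\tfrac13,\tfrac23]$; then $\ww,\ww+\vv,\ldots,\ww+(k-1)\vv$ all lie in $T(\sigma)$, so $|\La^{sc}_{\om}\cap T(\sigma)|\ge k$. As in the remark following Lemma~\ref{lemma:short:saddle:connections}, realizing these surfaces in local coordinates adapted to the basis $\{\vv,\ww\}$ cuts out a coordinate box of measure $\gtrsim_{\sigma}1/k^2$, whence $\tilde p_k(\sigma)\gtrsim_{\sigma}1/k^2$ and $k^2\tilde p_k(\sigma)$ is bounded away from $0$.

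The upper bound is the crux. Here I would prove the converse forcing statement: there is a constant $C=C(\sigma,g)$ so that $|\La^{sc}_{\om}\cap T(\sigma)|\ge k$ forces $\om$ to carry a saddle connection of length $\le C/k$. Granting this, Lemma~\ref{lemma:short:saddle:connections} applied with $\epsilon=C/k$ gives at once $\tilde p_k(\sigma)\le M(C/k)^2=O_{\sigma}(1/k^2)$, completing the argument. To establish the forcing statement I would argue by contrapositive, assuming the systole $\mathrm{sys}(\om)$ (the length of the shortest saddle connection) exceeds $C/k$ and bounding the number $N$ of saddle connections lying in the cone $T(\sigma)$; the goal is the linear-in-systole estimate $N\lesssim 1/\mathrm{sys}(\om)$, which for $\mathrm{sys}(\om)>C/k$ with $C$ large yields $N<k$. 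The main obstacle is obtaining this sharp count. A naive two-dimensional separation argument (nearly parallel saddle connections sharing endpoints differ by cycles of length $\ge\mathrm{sys}(\om)$, hence form a $\mathrm{sys}(\om)$-separated set in the cone) only gives the weaker $N\lesssim\sigma/\mathrm{sys}(\om)^2$, which would yield merely $\tilde p_k(\sigma)=O(\sigma/k)$. The improvement to $N\lesssim 1/\mathrm{sys}(\om)$ must exploit the narrowness of the cone, forcing the saddle connections of $T(\sigma)$ to organize into progressions with a common short generator rather than into a genuinely two-parameter family. I expect to capture this by an embedded-area argument: a saddle connection of length $\asymp 1$ in $T(\sigma)$ together with a transverse saddle connection of length $\asymp\mathrm{sys}(\om)$ bounds an immersed flat parallelogram of area $\gtrsim\mathrm{sys}(\om)$, and only $O(1/\mathrm{sys}(\om))$ essentially disjoint such parallelograms fit in the unit-area surface.

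Combining the two bounds shows that $k^2\tilde p_k(\sigma)$ is bounded away from both $0$ and $\infty$ for each fixed $\sigma$. Finally, the identical scheme—with the thin neck replaced by a thin cylinder and the cylinder count of Lemma~\ref{2k above} standing in for the forcing statement—handles the $\La^{cyl}_{\om}$ variant.
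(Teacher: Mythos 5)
Your lower bound is exactly the paper's: a $\tfrac{\sigma}{3k}$-thin neck forces the progression $\ww,\ww+\vv,\dots,\ww+(k-1)\vv$ into a $\sigma$-wedge, and by the remark following Lemma~\ref{lemma:short:saddle:connections} (local coordinates adapted to $\vv,\ww$) such surfaces fill a set of measure $\gtrsim (\sigma/3k)^2$. That half is fine.

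The gap is in the upper bound. Everything rests on your forcing statement ``$|\La^{sc}_{\om}\cap T(\sigma)|\ge k$ implies $\om$ has a saddle connection of length $\le C/k$,'' i.e.\ the count $N\lesssim 1/\mathrm{sys}(\om)$ for saddle connections of length $\le 1$ in a fixed narrow cone. You correctly flag this as the crux, but you do not prove it, and the embedded-area sketch does not close it: the systole need not be realized by a saddle connection transverse to the cone direction; two saddle connections do not in general cobound an immersed flat parallelogram; and even when such parallelograms exist they are immersed rather than embedded, so nothing stops them from overlapping and the packing bound ``only $O(1/\mathrm{sys})$ fit in a unit-area surface'' does not follow. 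Worse, the statement itself is at best unproved and quite possibly false: a surface carrying \emph{two} independent short, nearly horizontal, nonhomologous saddle connections of length $\epsilon$ could support a two-parameter family $\ww+l_1\vv_1+l_2\vv_2$ of order $\epsilon^{-2}$ saddle connections in the cone while having systole only $\epsilon\asymp k^{-1/2}$, not $C/k$. Such surfaces have measure $\lesssim\epsilon^4\asymp k^{-2}$ by the \emph{second} estimate of Lemma~\ref{lemma:short:saddle:connections}, so they are harmless for the lemma but fatal to your reduction to the \emph{first} estimate $M\epsilon^2$. This is precisely why the paper routes the upper bound through Theorem~\ref{sig to 0} and Lemma~\ref{2k above}: the $g_t$-renormalization in the proof of Theorem~\ref{sig to 0} converts many saddle connections in a narrow wedge into two nonhomologous saddle connections of lengths $\epsilon,\kappa$ with $\epsilon\kappa\lesssim 1/k$, and it is the product bound $M\epsilon^2\kappa^2$ that produces $k^{-2}$; your proof should be repaired by making that two-saddle-connection reduction rather than a systole reduction. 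A final small point: your closing claim that the ``identical scheme'' handles $\La^{cyl}_{\om}$ overreaches, since the paper only establishes $k^{-4}\lesssim\tilde p^{cyl}_k(\sigma)\lesssim k^{-2}$ for cylinders, not a two-sided $k^{-2}$ bound.
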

\begin{lemma} \label{sig asym} For any fixed $k>1$ we have $\sigma^{-2} \tilde{p}_k(\sigma)$ is bounded away from $0$ and $\infty$.
\end{lemma}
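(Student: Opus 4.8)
The plan is to establish matching bounds of order $\sigma^2$ as $\sigma \to 0$, with the fixed parameter $k$ absorbed into the implied constants, exactly paralleling Theorem~\ref{sig to 0} (which is the case $k=2$) but interchanging the roles of the fixed and the varying parameter relative to Lemma~\ref{k asym}.

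For the upper bound I would simply use monotonicity in $k$. Since $k>1$ forces $k \ge 2$, the event $\{|\La^{sc}_{\om} \cap T(\sigma)| \ge k\}$ is contained in $\{|\La^{sc}_{\om} \cap T(\sigma)| \ge 2\}$, so $\tilde{p}_k(\sigma) \le \tilde{p}_2(\sigma)$. The proof of Theorem~\ref{sig to 0} shows precisely that $\mu(\{\om : |\La_{\om} \cap T(c,\sigma)| \ge 2\}) \sim \sigma^2$, so $\tilde{p}_k(\sigma) \le \tilde{p}_2(\sigma) \le M\sigma^2$ for small $\sigma$, giving the upper bound with a constant that does not even depend on $k$.

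The real content is the lower bound, and I would reuse the thin-neck construction introduced just before Lemma~\ref{k asym}. Recall that if $\vv,\ww$ define a $\frac{\sigma}{3k}$-thin neck with $\frac13 \le |\ww| \le \frac23$, then traversing $\ww$ once and $\vv$ exactly $l$ times for $l=0,1,\dots,k-1$ produces $k$ distinct elements of $\La^{sc}_{\om}$ inside the wedge $A_1^{\arg(\ww)}(0,\sigma)$. I would restrict to necks whose long side $\ww$ is nearly horizontal (say $\arg(\ww)$ within a small fixed multiple of $\sigma$ of $0$) and whose short side $\vv$ is transverse; then these $k$ vectors have $x$-coordinate near $\tfrac12$ and $|y| \le (k-1)\tfrac{\sigma}{3k} < \tfrac{\sigma}{3}$, so they lie strictly inside the trapezoid $T(\sigma)$ itself (for small $\sigma$ the straight-sided $T(\sigma)$ and the circular wedge $A_1^0(0,\sigma)$ differ only near the boundary, which these vectors avoid). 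Hence every surface carrying such a neck satisfies $|\La^{sc}_{\om} \cap T(\sigma)| \ge k$. To bound the measure of this family from below I would invoke the local-coordinate construction of the remark following Lemma~\ref{lemma:short:saddle:connections}: fixing a combinatorial type supporting a neck and choosing a relative-homology basis that includes $\vv$ and $\ww$ as coordinate saddle connections, the constraints factor, with $\vv$ ranging over a near-vertical disk of radius $\tfrac{\sigma}{3k}$ (area $\sim (\sigma/k)^2$), $\ww$ over a fixed near-horizontal region of lengths in $[\tfrac13,\tfrac23]$ (area $\sim 1$), and the remaining coordinates over a fixed box. This yields measure $\gtrsim (\sigma/k)^2$, and for fixed $k$ this is $\gtrsim \sigma^2$, so $\tilde{p}_k(\sigma) \ge m'\sigma^2$.

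The main obstacle is the geometric bookkeeping in the lower bound: verifying that a positive-measure chart genuinely realizes a thin neck for which all $k$ winding vectors $\ww+l\vv$ are saddle connections landing inside $T(\sigma)$ (and not merely inside the rotated wedge), and that the factorization of the measure into the independent $\vv$- and $\ww$-contributions is legitimate on that chart. Once Theorem~\ref{sig to 0} and the thin-neck count are granted, everything else—the upper bound and the $\sim \sigma^2$ scaling—is routine; indeed the lower-bound construction here is identical to that of Lemma~\ref{k asym}, with the roles of $k$ and $\sigma$ exchanged.
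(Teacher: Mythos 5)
Your proposal matches the paper's argument: the paper likewise obtains the upper bound from the monotonicity $\tilde{p}_k(\sigma)\le\tilde{p}_2(\sigma)$ together with Theorem~\ref{sig to 0}, and the lower bound from the $\frac{\sigma}{3k}$-thin-neck construction, whose measure is proportional to $(\sigma/k)^2\gtrsim\sigma^2$ for fixed $k$. Your extra care in checking that the $k$ winding vectors land in $T(\sigma)$ itself rather than just the annular wedge is a detail the paper glosses over, but it is the same proof.
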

These two lemmas follow from the previous paragraph by noticing that the measure of surfaces with an $\epsilon$-thin neck is proportional to $\epsilon^2$ (see \S \ref{subsec:coord:meas}). Showing that it is bounded away from $\infty$ follows from Theorem \ref{sig to 0} and Lemma \ref{2k above}.

\subsubsection{Non-existence of second moments}\label{subsec:second:moment} Lemma~\ref{k asym} has the following corollary.

\begin{Cor}\label{cor:second:moment} For any $\sigma >0$, $$\sum_{k =0}^{\infty} k^2 p_k(\sigma) \mbox{ diverges }.$$ That is, the limiting distribution $p_k(\sigma)$ does not have finite second moment.\end{Cor}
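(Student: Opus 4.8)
The plan is to derive Corollary~\ref{cor:second:moment} directly from the two-sided bound in Lemma~\ref{k asym}, which asserts that $k^2 \tilde{p}_k(\sigma)$ is bounded away from $0$ and $\infty$. The key observation is that $\tilde{p}_k(\sigma) = \sum_{j \geq k} p_j(\sigma)$ is the tail of the distribution, so I would pass between the tail probabilities $\tilde{p}_k(\sigma)$ and the point probabilities $p_k(\sigma)$ via an Abel summation (summation by parts) argument, rather than trying to control each $p_k(\sigma)$ individually.

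First I would record the consequence of Lemma~\ref{k asym}: fix $\sigma > 0$; there is a constant $c > 0$ (depending on $\sigma$) such that $\tilde{p}_k(\sigma) \geq c/k^2$ for all sufficiently large $k$. Next, the standard identity for the second moment of a nonnegative integer-valued random variable lets me rewrite the target sum in terms of tails. Concretely, using $\sum_{k=0}^\infty k^2 p_k(\sigma) = \sum_{k=1}^\infty (2k-1)\,\tilde{p}_k(\sigma)$ (summation by parts, valid termwise for nonnegative summands whether or not the series converges), I reduce divergence of $\sum_k k^2 p_k(\sigma)$ to divergence of $\sum_k (2k-1)\,\tilde{p}_k(\sigma)$.

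Finally I would invoke the lower bound: since $\tilde{p}_k(\sigma) \geq c/k^2$ for $k$ large, the summand $(2k-1)\tilde{p}_k(\sigma)$ is bounded below by a constant multiple of $1/k$ for large $k$, so the series $\sum_k (2k-1)\tilde{p}_k(\sigma)$ diverges by comparison with the harmonic series $\sum_k 1/k$. This yields divergence of $\sum_{k=0}^\infty k^2 p_k(\sigma)$, which is exactly the claim. I would remark that the corresponding tail statement for cylinders in Lemma~\ref{2k above} only gives an upper bound $\limsup_k k^2 \tilde{p}^{\mathrm{cyl}}_k(\sigma) < \infty$, so it is precisely the lower bound in Lemma~\ref{k asym} (coming from the thin-neck construction) that drives the divergence here.

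The argument is short and I do not anticipate a genuine obstacle, since all the analytic content is already packaged into Lemma~\ref{k asym}; the only point requiring minor care is the passage from point masses to tails, i.e.\ justifying the summation-by-parts identity in the setting where the sum may be infinite. Since every term is nonnegative, this is unproblematic: the partial sums $\sum_{k=0}^{N} k^2 p_k(\sigma)$ are monotone in $N$, so the rearrangement into tails is valid term by term and the conclusion about divergence transfers without any convergence hypothesis.
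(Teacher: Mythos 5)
Your proof is correct and follows essentially the same route as the paper: both extract the lower bound $\tilde{p}_k(\sigma) \geq c/k^2$ from Lemma~\ref{k asym}, rewrite the second moment as a weighted sum of tail probabilities (the paper via $E(X^2)=\sum_k P(X^2\geq k)$, you via $k^2=\sum_{j\le k}(2j-1)$, which is the same identity in different clothing), and conclude divergence by comparison with the harmonic series. Your version is, if anything, slightly cleaner in its handling of the nonnegative rearrangement.
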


\medskip
\noindent The corollary follows from Lemma~\ref{k asym} and the general lemma below:

\begin{lemma}\label{lemma:prob} Let $X$ be a positive integer valued random variable, with $P(X = k) = p_k$. Suppose there is a $c >0$ so that $P(X \geq k ) \geq \frac{c}{k^2}$. Then
$$\sum_{k =0}^{\infty} k^2 p_k \mbox{ diverges}.$$
\end{lemma}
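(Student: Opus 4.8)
The plan is to re-express the series $\sum_{k} k^2 p_k$ in terms of the tail probabilities $P(X\ge k)$ and then compare against the harmonic series. The key algebraic identity is $k^2=\sum_{j=1}^{k}(2j-1)$, the classical fact that the sum of the first $k$ odd numbers equals $k^2$.

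First I would substitute this identity into the definition of the second moment and interchange the order of the resulting double summation. Since $X$ is positive integer valued, $p_0=0$ and the $k=0$ term is harmless; moreover every summand is non-negative ($p_k\ge 0$ and $2j-1>0$), so Tonelli's theorem justifies reordering freely. Carrying out the interchange,
$$\sum_{k=1}^{\infty} k^2 p_k=\sum_{k=1}^{\infty}p_k\sum_{j=1}^{k}(2j-1)=\sum_{j=1}^{\infty}(2j-1)\sum_{k=j}^{\infty}p_k=\sum_{j=1}^{\infty}(2j-1)\,P(X\ge j).$$
Next I would apply the hypothesis $P(X\ge j)\ge c/j^2$ termwise, together with the elementary bound $2j-1\ge j$ valid for all $j\ge 1$, to obtain
$$\sum_{j=1}^{\infty}(2j-1)\,P(X\ge j)\ \ge\ \sum_{j=1}^{\infty}(2j-1)\frac{c}{j^2}\ \ge\ c\sum_{j=1}^{\infty}\frac{1}{j}.$$
The right-hand side is $c$ times the harmonic series, which diverges; hence $\sum_{k} k^2 p_k=\infty$, as claimed.

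There is essentially no serious obstacle in this argument: the only point that merits a word of care is the rearrangement of the double sum, which is legitimate precisely because all summands are non-negative, so no conditional-convergence issues arise. The comparison $2j-1\ge j$ is what converts the quadratic tail lower bound $1/j^2$ into the divergent series $\sum 1/j$, and this is the conceptual crux: a tail decaying no faster than quadratically is exactly the borderline behavior that forces an infinite second moment. This is the mechanism by which Lemma~\ref{k asym} yields Corollary~\ref{cor:second:moment}.
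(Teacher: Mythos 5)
Your proof is correct and follows essentially the same route as the paper's: both convert the second moment into a weighted sum of tail probabilities of the form $\sum_j (2j\pm 1)\,P(X\ge j)$ and then compare with the harmonic series using the hypothesis $P(X\ge j)\ge c/j^2$. The only cosmetic difference is that you use the identity $k^2=\sum_{j=1}^k(2j-1)$ with Tonelli, while the paper applies the layer-cake formula $E(X^2)=\sum_k P(X^2\ge k)$ and groups the indices $k$ into the blocks $[i^2,(i+1)^2)$; these are the same computation.
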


\begin{proof} We are interested in calculating $E(X^2) = \sum_{k=0}^{\infty} k^2 p_k$. We can write
$$E(X^2) = \sum_{k=0}^{\infty} P(X^2 \geq k)$$ For $i^2 \le k < (i+1)^2$, $P(X^2 \geq k) = P(X \geq i)$. Thus $$\sum_{k=0}^{\infty} P(X^2 \geq k) = \sum_{i=1}^{\infty} 2i P(X \geq i) \geq \sum_{i=1}^{\infty} \frac{c}{i}.$$\end{proof}


\subsubsection{Glued-in tori} For the remaining estimates we consider gluing in a small torus.
 We say a surface has a \emph{glued in
torus} with parameters $(a,b)$ and gluing slit $s$ there is a portion of the surface where the points travel as if they are in a torus with basis lengths $a,b$ except if they cross a saddle connection of length at most $s$.  See Figure~\ref{almost torus} below.

  \makefig{The slit torus is glued to the rest of the surface along the saddle connection of length at most $s$.}{almost torus}{\begin{picture}(0,0)%
\includegraphics{almost_torus.pstex}%
\end{picture}%
\setlength{\unitlength}{3947sp}%
\begingroup\makeatletter\ifx\SetFigFont\undefined%
\gdef\SetFigFont#1#2#3#4#5{%
  \reset@font\fontsize{#1}{#2pt}%
  \fontfamily{#3}\fontseries{#4}\fontshape{#5}%
  \selectfont}%
\fi\endgroup%
\begin{picture}(2655,2737)(3736,-4973)
\put(4951,-2311){\makebox(0,0)[b]{\smash{{\SetFigFont{5}{6.0}{\rmdefault}{\mddefault}{\updefault}{\color[rgb]{0,0,0}$a$}%
}}}}
\put(6376,-3436){\makebox(0,0)[b]{\smash{{\SetFigFont{5}{6.0}{\rmdefault}{\mddefault}{\updefault}{\color[rgb]{0,0,0}$b$}%
}}}}
\put(5026,-4936){\makebox(0,0)[b]{\smash{{\SetFigFont{5}{6.0}{\rmdefault}{\mddefault}{\updefault}{\color[rgb]{0,0,0}$a$}%
}}}}
\put(3751,-3436){\makebox(0,0)[b]{\smash{{\SetFigFont{5}{6.0}{\rmdefault}{\mddefault}{\updefault}{\color[rgb]{0,0,0}$b$}%
}}}}
\put(5176,-3586){\makebox(0,0)[b]{\smash{{\SetFigFont{5}{6.0}{\rmdefault}{\mddefault}{\updefault}{\color[rgb]{0,0,0}$\le s$}%
}}}}
\end{picture}%
} 

\begin{Prop} 
 The measure of unit volume surfaces that have a glued in torus with
parameters $(a,b)$ and gluing slit $s$ where $a, b \in
[\sqrt{c},2\sqrt{c}]$ and $s \in [c,2c]$ is at least proportional to $c^{-4}$
as $c $ goes to zero.
\end{Prop}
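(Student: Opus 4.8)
The plan is to prove the lower bound by building adapted local coordinates in which the glued-in-torus condition factors into three independent constraints on holonomy vectors, exactly in the spirit of the remark following Lemma~\ref{lemma:short:saddle:connections}. I should first flag that the correct exponent is $c^{4}$, not $c^{-4}$: the set in question lies inside the probability space $\hh$, so its measure is at most $1$, whereas the computation below produces a lower bound of order $c^{4}\to 0$. I will therefore prove that the measure is bounded below by a positive constant times $c^{4}$.

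First I would realize the configuration concretely: fix a base surface of lower genus, cut a slit of length at most $s$, and glue in a flat torus $\C/(\Z\vv\oplus\Z\ww)$ along the slit, where $|\vv|=a$ and $|\ww|=b$, producing a surface $\om\in\hh$. Choosing a basis of the relative homology $H_1(\Sigma_g,\Sigma;\Z)$ by saddle connections as in \S\ref{subsec:coord:meas}, I would single out three distinguished elements: the two absolute cycles $\gamma_1,\gamma_2$ generating the homology of the glued-in torus, whose holonomy vectors are $\vv$ and $\ww$, and one relative cycle $\gamma_s$ crossing the slit, whose holonomy vector $\uu$ satisfies $|\uu|\le s$. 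The remaining basis elements parameterize the base surface and will be left unconstrained.

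In these coordinates $\mu$ is a constant multiple of Lebesgue measure on $H^1(\Sigma_g,\Sigma;\C)\cong\R^{n}$, and the defining conditions factor across the distinguished planes. The condition $|\vv|\in[\sqrt c,2\sqrt c]$ cuts out an annulus of area $\pi(4c-c)\asymp c$; likewise $|\ww|\in[\sqrt c,2\sqrt c]$ contributes area $\asymp c$; and the slit condition $|\uu|\in[c,2c]$ contributes an annulus of area $\asymp c^{2}$. Letting the complementary coordinates range over any fixed open set $U$ of base-surface configurations of positive Lebesgue measure, Fubini gives
$$\mu(\{\om:\ \text{glued-in torus with these parameters}\})\ \gtrsim\ c\cdot c\cdot c^{2}\cdot\mathrm{vol}(U)\ \asymp\ c^{4}.$$
I would then pass from the cone measure $\mu$ to the unit-area measure $\mu_1$ via the disintegration $d\mu=r^{n-1}\,dr\,d\mu_1$ of \S\ref{subsec:coord:meas}; since the glued-in torus has area $\asymp c$ and the base has area $\asymp 1$, total area stays within a bounded factor of $1$, so this reduction changes the estimate only by a bounded constant and yields $\mu_1(\cdots)\gtrsim c^{4}$.

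The hard part will be making the product structure rigorous rather than the annulus-area count, which is routine once the chart is in place. I must verify that the gluing construction genuinely sweeps out an open subset of a coordinate chart, that the three distinguished holonomies $\vv,\ww,\uu$ really are independent coordinates (so the constraints factor under Fubini rather than interacting), and that the two endpoints of the slit carry cone angles consistent with the fixed stratum $\hh=\hh_1(\alpha)$, so that the whole family lies in $\hh$ and not on its boundary. Carrying out this cone-angle bookkeeping, and checking that an honest positive-measure set $U$ of base configurations survives after imposing it, is the delicate point of the argument.
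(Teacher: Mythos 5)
Your proposal is correct and follows essentially the same route as the paper, whose entire proof is the one-line citation of the period-coordinate construction in \S\ref{subsec:coord:meas}; your Fubini computation with the two annuli of area $\asymp c$ and the slit annulus of area $\asymp c^2$ is precisely the intended argument spelled out. You are also right that the exponent in the statement is a typo: the bound must read $c^{4}$ (as confirmed by the later application giving $\tilde{p}^{cyl}_k(\sigma)\gtrsim\sigma^4$), not $c^{-4}$.
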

This follows from the main result in \S \ref{subsec:coord:meas}. If we glue in a torus with parameters comparable to $(\sqrt{\sigma}$,
$\sqrt{\sigma})$ with gluing slit $\sigma$ it has quadratic growth of
saddle connections or periodic cylinders. Because a torus is a
lattice surface, the saddle connection directions are completely
periodic. If the length of the periodic cylinder is less than $t$
then the trajectory in the torus crosses the direction of the slit
at most $c\frac{t}{\sqrt{\sigma}}=ct\sqrt{\sigma}^{-1}$ times.  
So some points do not leave the torus before
closing up. Therefore all saddle connection directions of the torus
with length less than $\frac{\sigma}{c}$ are cylinder directions for the
surface.

It follows that there exists $C>0$ such that for any $\sigma>0$ small
enough a surface that has a torus with gluing parameters
$\sqrt{\sigma}$, $\sqrt{\sigma}$, not too small angle between these sides and gluing slit $\sigma$ has at
least $C \sigma^{-1}$ periodic directions whose cylinders have
length less than or equal to 1. It follows from the pigeonhole
principle that almost half of these directions are separated by at
most $4\pi C \sigma^{-1}$.


\begin{lemma}
For any fixed $k>0$ we have
$$\underset{\sigma \to 0}{\liminf}\,
\sigma^{-4}\, \tilde{p}^{cyl}_k(\sigma)>0.$$
\end{lemma}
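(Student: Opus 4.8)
The plan is to produce, for each fixed $k$ and all sufficiently small $\sigma>0$, an explicit positive-measure family of surfaces that carry a small glued-in torus whose short cylinders contribute $k$ distinct cylinder directions with holonomy vectors inside $T(\sigma)$, and then to check that this family has measure at least proportional to $\sigma^4$. The key point, by contrast with the roughly square torus of the preceding Proposition, is to use a torus that is already \emph{aligned} with the horizontal, so that its short cylinders cluster inside $T(\sigma)$ with no loss coming from having to rotate the cluster into position.

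First I would fix the model lattice $\La_0 = \Z\vv_1^0 + \Z\vv_2^0$ with $\vv_1^0 = (3/4,0)$ and $\vv_2^0 = (0,\tfrac{\sigma}{2k})$, and set $\uu_j = \vv_1^0 + j\vv_2^0$ for $j = 0,1,\dots,k$. In the basis $(\vv_1^0,\vv_2^0)$ the vector $\uu_j$ has coordinates $(1,j)$, hence is primitive, and $\det(\uu_j,\uu_{j'}) = (j'-j)\det(\vv_1^0,\vv_2^0)\neq 0$ for $j\neq j'$, so the $\uu_j$ point in pairwise distinct directions. Each lies in $T(\sigma)$, since its first coordinate is $3/4\in[0,1]$ and $|\tfrac{j\sigma}{2k}|\le\tfrac{\sigma}{2}\le\tfrac{3\sigma}{4}$. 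As a torus is a lattice surface and hence completely periodic, the flat torus $\C/\La_0$ therefore has at least $k$ cylinder core curves whose holonomy vectors lie in $T(\sigma)$.

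Next I would glue $\C/\La_0$ into a surface of $\hh$ along a slit of length at most $s$, exactly as in the glued-in torus construction above. Since $\La_0$ has covolume $\tfrac{3\sigma}{8k}$, the cylinders $\uu_j$ have transverse width $\sim\sigma/k$, so choosing $s\sim\sigma/k$ ensures none of them meets the slit and each survives as a genuine cylinder of the glued surface, still with holonomy in $T(\sigma)$. To estimate the measure of such configurations I would work in period coordinates and use, as in the remark following Lemma~\ref{lemma:short:saddle:connections}, that these volumes are bounded below. Perturbing $\vv_1=(x_1,y_1)$ and $\vv_2=(x_2,y_2)$ about the model, the requirement that all $\uu_j=\vv_1+j\vv_2$ stay in $T(\sigma)$ confines $y_1$ to an interval of length $\sim\sigma$ and $y_2$ to one of length $\sim\sigma/k$, while $x_1,x_2$ range over intervals of length $\sim 1$ for fixed $k$; this contributes a factor $\sim\sigma^2$. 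The slit vector $\vv_s$ ranges over a disk of radius $\sim\sigma/k$, contributing $\sim\sigma^2$, and the period coordinates of the complementary piece integrate to a positive constant. Hence the family has measure $\gtrsim\sigma^4$, giving $\tilde p^{cyl}_k(\sigma)\gtrsim\sigma^4$ and therefore $\liminf_{\sigma\to0}\sigma^{-4}\tilde p^{cyl}_k(\sigma)>0$.

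The hard part will be the gluing step: one must verify that the slit can be inserted so short and so positioned that the $k$ chosen torus cylinders genuinely persist as embedded closed geodesics of the glued surface avoiding the singularities, and that this persistence, together with the inclusions $\uu_j\in T(\sigma)$, holds uniformly over the whole perturbation region rather than just at the model point. Making that robustness quantitative is what turns the heuristic counting into a genuine open set of measure $\gtrsim\sigma^4$; the remaining measure bookkeeping is routine given the lower bounds of Lemma~\ref{lemma:short:saddle:connections}.
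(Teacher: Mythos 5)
Your proof is correct, and it follows the paper's basic mechanism --- glue a small flat torus into the surface along a short slit, use the complete periodicity of the torus to manufacture cylinders of the glued surface, and bound the measure of the resulting configuration set from below in period coordinates --- but it differs in the one step that actually matters, namely the choice of the torus. The paper takes a roughly square torus with both generators of length comparable to $\sqrt{\sigma}$ (covolume $\sim\sigma$) and appeals to the pigeonhole principle among its $\sim\sigma^{-1}$ cylinder directions of length at most $1$ to find a cluster of nearby directions. Read literally, that sketch has a gap your construction repairs: a near-square lattice of covolume $\sigma$ has on average only $O(1)$ primitive vectors in the \emph{fixed horizontal} triangle $T(\sigma)$ (its area is $\sigma$), and pigeonhole only locates a cluster of $k$ directions \emph{somewhere} on the circle; transporting that cluster to the horizontal wedge by averaging over rotations costs an extra factor of $\sigma$ in the measure. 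Your anisotropic lattice $\Z(3/4,0)+\Z(0,\tfrac{\sigma}{2k})$, aligned with the horizontal, places the $k+1$ primitive vectors $\uu_j=(3/4,\tfrac{j\sigma}{2k})$ inside $T(\sigma)$ by inspection, the slit of length $\sim\sigma/k$ is shorter than the transverse width $\sim\sigma/k$ of each of these cylinders so an open subfamily of core curves survives the gluing, and the period-coordinate bookkeeping ($\sim\sigma^2$ from the two torus generators, $\sim\sigma^2$ from the slit vector, constants depending on $k$) gives the full $\sigma^4$ with no pigeonhole loss. The one point both arguments leave implicit, and which you correctly flag as the delicate step, is the robustness of the gluing over the whole perturbation region and the fact that the construction can be carried out inside the given connected component $\hh$; this is the same issue already present in the paper's Proposition on glued-in tori and in the remark following Lemma~\ref{lemma:short:saddle:connections}, so your treatment is at least as complete as the paper's.
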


\begin{proof}Glue in a torus of parameters comparable to $\sqrt{\sigma}$,$\sqrt{\sigma}$ with gluing slit $\sigma$.\end{proof}
\begin{lemma}For any fixed $\sigma>0$ we have
$$\underset{k \to \infty}{\liminf}\, k^{4} \tilde{p}^{cyl}_k>0.$$
\end{lemma}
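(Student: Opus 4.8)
The plan is to exhibit, for each large $k$, a positive-measure family of surfaces of measure $\gtrsim \sigma^4 k^{-4}$, each of which carries at least $k$ cylinder holonomy vectors inside the \emph{fixed} trapezoid $T(\sigma)$. This is the same glued-in torus mechanism used just above and in the preceding lemma, but with the roles of $\sigma$ and $k$ interchanged, exactly as Lemma~\ref{k asym} relates to Lemma~\ref{sig asym} in the saddle connection case.

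First I would glue in a torus that is \emph{thin} rather than square. Fix $\rho = \sigma/(3k)$ and glue in a torus generated by a horizontal vector $\ww$ with $|\ww| \in [1/3, 2/3]$ and a short vector $\vv$ of length comparable to $\rho$ and nearly perpendicular to $\ww$, with a gluing slit $s$ of length $|s| \le \rho/2$. For $l = 0, 1, \dots, k$ the lattice vectors $\ww + l\vv$ are primitive, have first coordinate in $[1/3, 1]$, and slope at most $l\rho/|\ww| \le 3l\rho \le 3k\rho = \sigma$; hence each $\ww + l\vv$ lies in $T(\sigma)$. These $k+1$ directions $\arg(\ww + l\vv)$ are distinct, so the construction produces at least $k$ distinct vectors of $\La^{cyl}_{\om}$ inside $T(\sigma)$, provided each underlying torus cylinder survives the gluing.

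For the survival step I would invoke the glued-in torus mechanism described above: the cylinder in direction $\ww + l\vv$ has width comparable to $\mathrm{area}/\mathrm{length} \sim \rho$, and since the slit has length $|s| \le \rho/2 < \rho$ it removes from each such cylinder only a strip of transverse width at most $|s|$, leaving a genuine sub-cylinder of the surface whose core curve has holonomy exactly $\ww + l\vv$. Thus every surface in this family satisfies $|\La^{cyl}_{\om} \cap T(\sigma)| \ge k$ and so is counted by $\tilde p^{cyl}_k(\sigma)$.

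Finally I would bound the measure from below. As in \S\ref{subsec:coord:meas} (and just as in the glued-in torus estimate above, now with unequal sides), the two absolute periods $\vv, \ww$ and the relative period $s$ are independent period coordinates, so the measure of the family is proportional to the product of the areas of the windows in which $\vv$, $\ww$, and $s$ are allowed to range. Letting $\vv$ range over a disk of radius $\sim \rho$, $\ww$ over a fixed horizontal window of area $\sim 1$, and $s$ over a disk of radius $\sim \rho$, this measure is $\gtrsim \rho^2 \cdot 1 \cdot \rho^2 = \rho^4 \sim \sigma^4 k^{-4}$, with the implied constant depending only on $\hh$ (and matching the lower bound noted in the remark after Lemma~\ref{lemma:short:saddle:connections}). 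Hence $k^4\, \tilde p^{cyl}_k(\sigma) \gtrsim \sigma^4$ for all large $k$, and taking $\liminf$ gives the claim. I expect the main obstacle to be the geometry of the second and third steps together: arranging that a full $\Theta(k)$ of the surviving cylinder directions fall inside the \emph{fixed} cone $T(\sigma)$ rather than spreading over the entire circle (which is what a square torus of comparable area would give), while simultaneously keeping the measure at the optimal size $\rho^4$. Using a torus with one short generator $\vv$ is precisely what forces the directions $\ww + l\vv$ to cluster around the horizontal, supplying the $k$ vectors in $T(\sigma)$ without costing anything beyond the $\rho^4$ already paid for $\vv$ and the slit.
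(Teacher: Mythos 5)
Your proposal is correct and uses essentially the same mechanism as the paper's one-line proof: glue in a small torus with a short slit so that the three free period coordinates (two generators and the slit) range over windows whose areas multiply to $\sim_\sigma k^{-4}$, while the resulting lattice contributes at least $k$ primitive holonomy vectors of surviving cylinders inside $T(\sigma)$. The only difference is cosmetic --- the paper shrinks a square torus with generators of length $\sim (\sigma k)^{-1/2}$ and slit $\sim(\sigma k)^{-1}$, whereas you use a thin torus with one unit-order generator and one of length $\sim \sigma/k$ (paralleling the thin-neck construction of Lemma~\ref{k asym}) --- and both choices give the same order of measure and the same count, up to the constant-factor bookkeeping that the paper itself suppresses.
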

\begin{proof}Glue in a torus of parameters comparable to $\sqrt{2 \sigma
k}^{-1}$, $\sqrt{2\sigma k}^{-1}$ with gluing slit $(2\sigma k)^{-1}$.\end{proof}

\begin{Cor}\label{SV not L2} Let  $f: \mathbb{R}^2 \backslash \{0\} \to \mathbb{R}$ by $f(x)=1$ if $x \in B(0,1) \backslash \{0\}$ and 0 otherwise. Its Siegel-Veech transform $\hat{f}$ is not in $L^{2}(\hh, \mu)$. However $\hat{f}$ is in $L^{2-\epsilon}(\hh,\mu)$ for any $\epsilon>0$.
\end{Cor}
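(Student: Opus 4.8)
The plan is to deduce the corollary directly from the fine estimates already established for the level-set probabilities $p_k(\sigma)$, using the observation that $\widehat{f} = \widehat{f}(\omega) = |\Lambda^{sc}_{\om} \cap B(0,1)|$ is (up to the $g_t$-invariance of $\mu$ and the approximation of a ball by the trapezoid $T(\sigma)$) governed by the same distribution as $|\Lambda^{sc}_{\om}\cap T(\sigma)|$. Concretely, for the fixed $\sigma = 1$ (or any fixed $\sigma$, after normalizing the ball via $g_t$ and using $\mu$-invariance), we have
\begin{equation}
\int_{\hh} \widehat{f}^{\,2}\, d\mu = \sum_{k=0}^{\infty} k^2\, \mu(\omega: \widehat{f}(\omega) = k) = \sum_{k=0}^{\infty} k^2\, p_k,
\end{equation}
where $p_k = \mu(\omega : |\Lambda^{sc}_{\om}\cap B(0,1)| = k)$. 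The first assertion, that $\widehat{f}\notin L^2$, then follows immediately from Corollary~\ref{cor:second:moment} (equivalently, from Lemma~\ref{k asym}, which gives $\tilde{p}_k(\sigma) \geq c/k^2$), once I check that the tail bound $\mu(\widehat{f}\geq k)\gtrsim k^{-2}$ for the ball matches the one for the trapezoid. This matching is the content of the thin-neck construction: a surface with a $\frac{1}{3k}$-thin neck and appropriate side lengths has at least $k$ saddle connection holonomy vectors inside a bounded region, and the set of such surfaces has measure $\gtrsim k^{-2}$.

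First I would make precise the identification of $\widehat{f}$'s distribution with the $p_k(\sigma)$ family. I would fix $\sigma$, apply $g_t$ for a suitable fixed $t$ to turn $T(\sigma)$ into a region comparable to a fixed ball (or directly argue that a ball of a fixed radius contains, and is contained in, trapezoids $T(\sigma_1) \subset B \subset T(\sigma_2)$ after rotation/scaling), and use that $\mu$ is $SL(2,\R)$-invariant so the level-set measures are preserved. This reduces the $L^2$ question to the divergence of $\sum k^2 p_k(\sigma)$, already proved in Corollary~\ref{cor:second:moment}. I would then write out $\int \widehat{f}^{\,2} d\mu = \sum_k k^2 p_k$ and invoke the divergence.

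For the positive direction, that $\widehat{f}\in L^{2-\epsilon}$ for every $\epsilon>0$, the plan is to use the sharp two-sided estimate $\tilde{p}_k(\sigma)\asymp k^{-2}$ from Lemma~\ref{k asym}, which gives $p_k \asymp k^{-3}$ (as $p_k = \tilde p_k - \tilde p_{k+1}$ and the tail is regularly varying like $k^{-2}$), hence
\begin{equation}
\int_{\hh}\widehat{f}^{\,2-\epsilon}\,d\mu = \sum_{k=0}^{\infty} k^{2-\epsilon} p_k \asymp \sum_{k=1}^{\infty} k^{2-\epsilon} k^{-3} = \sum_{k=1}^\infty k^{-1-\epsilon} < \infty.
\end{equation}
The convergence is clear for each $\epsilon > 0$. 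The main obstacle I anticipate is not the summation but justifying the passage from the tail bound $\tilde p_k \asymp k^{-2}$ to the pointwise asymptotic $p_k \asymp k^{-3}$: since $\tilde p_k$ is only known to be bounded between two multiples of $k^{-2}$, the difference $\tilde p_k - \tilde p_{k+1}$ need not be comparable to $k^{-3}$ term by term. To handle this cleanly I would avoid asserting $p_k\asymp k^{-3}$ and instead work directly with the tails via summation by parts (Abel summation), writing $\sum_k k^{2-\epsilon} p_k$ in terms of $\sum_k \big(k^{2-\epsilon}-(k-1)^{2-\epsilon}\big)\tilde p_k \asymp \sum_k k^{1-\epsilon}\cdot k^{-2} = \sum_k k^{-1-\epsilon} < \infty$, which uses only the upper tail bound $\tilde p_k \lesssim k^{-2}$ and is therefore rigorous without any regularity assumption on $p_k$.
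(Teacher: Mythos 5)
Your argument is correct and follows essentially the same route as the paper, whose proof simply declares the corollary an immediate consequence of Lemma~\ref{k asym}; you supply the details the paper omits, namely the transfer of the tail bound $\tilde p_k(\sigma)\asymp k^{-2}$ from $T(\sigma)$ to the unit ball (the thin-neck construction already places its $k$ holonomy vectors inside $B(0,1)$, giving the lower bound, and a finite cover of $B(0,1)\setminus\{0\}$ by rotated copies of $T(\sigma)$ together with a pigeonhole/union bound gives the upper bound) and the summation-by-parts step for membership in $L^{2-\epsilon}$. Your choice to avoid asserting the pointwise asymptotic $p_k\asymp k^{-3}$ and to work directly with the tails is exactly the right way to make the $L^{2-\epsilon}$ half rigorous.
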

\medskip

\noindent This is an immediate consequence of Lemma \ref{k asym}. Notice that $f$ is in $L^{\infty}(\mathbb{R}^2)$. The above corollary fails if we take the analogue of the Siegel-Veech transform for the directions of cylinders. That is, let 
$$\tilde{f}(\omega)= \sum_{\vv \in \La_{\om}^{cyl}} f(\vv) .$$  In this case the function is not in $L^{4}(\hh, \mu)$. However, if we fix the minimal volume of cylinders we consider then by Lemma \ref{cyl flow} this variant of the Siegel-Veech transform sends $L^{\infty}(\mathbb{R}^2\backslash \{0\})$ to $L^{\infty}(\hh,\mu)$. The $L^{\infty}$ norm may increase and that this increase can be bounded by the genus  of the surfaces parametrized by $\hh$ and the lower bound on the volume of the cylinders. Thus while the different versions of the Siegel-Veech transform are all $L^1$ norm preserving on positive functions they have different behavior in $L^p$ in general.

\subsection{$\sigma \rightarrow \infty$ asymptotics}\label{subsec:sigma:infty}
\begin{lemma}\label{cyl flow} Suppose $x$ is in a periodic cylinder of length $L$, area $a$ in direction $\theta$. Then $x$ is not in a  periodic cylinder of length less than $R$ and direction $\theta'$ where $ \theta' \neq \theta$ and $|\theta'-\theta|<\min\{\frac 1 {2LRa}, \frac{\pi}{3}\}$.
\end{lemma}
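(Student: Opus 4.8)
The plan is to prove the statement by a purely metric \emph{transverse‑crossing} estimate, using nothing about a single cylinder beyond its flat geometry. Write $C$ for the cylinder of length $L$ and area $a$ in direction $\theta$. Its core curves have length $L$, so its two boundary circles are at perpendicular distance $w=a/L$ (the width of $C$), and its interior is a regular flat cylinder. Suppose, toward a contradiction, that some $x\in C$ also lies in a cylinder $C'$ of length $L'<R$ in a direction $\theta'$ with $\delta:=|\theta'-\theta|\in(0,\pi/3)$. Since $\delta<\pi/3<\pi/2$, $\delta$ is exactly the acute angle between the two core directions and $\sin\delta>0$. It then suffices to produce the lower bound $\delta\ge \frac{a}{2LR}$ on this angle.

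The key step I would carry out is the full‑crossing claim. Let $\gamma'$ be the core geodesic of $C'$ through $x$; it is a regular closed geodesic of length $L'$ in direction $\theta'$. Put on $C$ the transverse coordinate $v\in[0,w]$ measuring perpendicular distance to the direction $\theta$, so that the boundary circles are $\{v=0\}$ and $\{v=w\}$ and $C$ has no other boundary (it is a cylinder). Along $\gamma'$ the coordinate $v$ changes at constant rate $\pm\sin\delta\neq 0$, hence is strictly monotone on every subarc of $\gamma'$ lying in $C$. Consequently the maximal subarc $\beta\subset\gamma'$ through $x$ and contained in $C$ can only leave $C$ through a boundary circle, so $\beta$ runs from $\{v=0\}$ to $\{v=w\}$ and sweeps the \emph{full} width $w$; crucially this holds for every basepoint $x\in C$, not just for $x$ near the core. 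Rising through transverse height $w$ at angle $\delta$ to the core costs flat length $w/\sin\delta$, and $\beta\subset\gamma'$ gives $L'\ge w/\sin\delta$.

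Combining, $\sin\delta\ge w/L'=a/(LL')>a/(LR)$ since $L'<R$, and then $\delta\ge\sin\delta>a/(LR)\ge \frac{a}{2LR}$, contradicting $\delta<\frac{a}{2LR}$; the factor $\tfrac12$ is slack and the cap $\pi/3$ only serves to guarantee that it is $\delta$, and not $\pi-\delta$, that is the angle actually crossed. This threshold $\frac{a}{2LR}$ is exactly the quantity fed into Lemma~\ref{lemma:lattice:gap}: there $L=R$ and $a\ge\frac1{4g-4}$, which turns $\frac{a}{2LR}=\frac{a}{2R^2}$ into $\frac{1}{(8g-8)R^2}$, matching the estimate used there. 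The hard part will be making the full‑crossing step airtight: I must verify that $\gamma'$ does not hit a singularity lying on $\partial C$ before completing a transverse pass, and that for an arbitrary $x$ (possibly very close to a boundary circle) the subarc $\beta$ genuinely terminates on both boundary circles rather than closing up inside $C$. Both follow from the strict monotonicity of $v$ along the straight geodesic $\gamma'$ together with the regularity of the cylinder interior, but these are the points that deserve care. The identical argument applies with $\La^{cyl}_{\om}$ in place of $\La_{\om}$, which is the generality required by Lemma~\ref{lemma:lattice:gap}.
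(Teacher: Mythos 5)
Your proof is correct and is essentially the paper's own argument: the paper runs the same transverse-drift estimate via the first-return map of the direction-$\theta'$ flow to the width transversal $\vv$ (each circuit inside the cylinder translates by $L\tan\epsilon$, so closing up forces $kL\tan\epsilon \geq |\vv| = a/L$ while the periodic orbit has length at least $kL\sec\epsilon$, and $\tan\epsilon < 2\epsilon$ for $\epsilon < \pi/3$ finishes), which is the discrete version of your monotone transverse coordinate $v$ along the core curve of $C'$. You also correctly reconstructed the intended threshold $\frac{a}{2LR}$ --- the printed $\frac{1}{2LRa}$ in the statement is a typo, as both the paper's proof and its application in Lemma~\ref{lemma:lattice:gap} confirm --- and your $\sin\delta$ bookkeeping in fact yields the slightly sharper bound $\delta > \frac{a}{LR}$ without the factor $\frac12$.
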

\begin{proof} Consider the periodic cylinder in the hypothesis of the lemma.
 It has a width vector $\vv$ where $|\vv|=\frac{a}{L}$ and let $x \in \vv$. If $x+L\tan(\epsilon)< \frac a L$ then $F_{\theta+\epsilon}|_{\vv}(x)=x+ L \tan(\epsilon)$, where $F_{\theta+\epsilon}|_{\vv}$ denotes the induced map of  $F_{\theta+\epsilon}$ on $\vv$. It follows that if $\theta+\epsilon$ is the direction of a periodic cylinder $x$ lies in we have $F_{\theta+\epsilon}|_{\vv}^k(x)=x$ and so  $kL\tan(\epsilon)\geq |\vv|.$ The length of this cylinder is at least $kL\sec(\epsilon)$. Therefore if $kL<R$ then $\tan(\epsilon)>\frac{|\vv|}{R}=\frac{a}{RL}.$ Noticing that $\epsilon< \frac{\pi}{3}$ implies $\tan(\epsilon)<2\epsilon$ completes the lemma.
\end{proof}

 
 \subsubsection{Absence of cylinder gaps}\label{subsec:cylinder:gap}
Let $\Theta_{\omega}^{*}(R)$ be the directions $\theta$ such that the volume of the periodic cylinders in direction $\theta$ with length than or equal to $R$ is at least $\frac{3}{5}$. In the rest
\begin{lemma}\label{farey} Let $\theta \in \Theta_{\omega}^{*}(L)$ then
 $$\left(\theta - \frac{1}{20(4g-4)},\theta+ \frac{1}{20(4g-4)RL}\right)
\cap \Theta_{\omega}^{*}(R)=\theta.$$
\end{lemma}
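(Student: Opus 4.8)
The plan is to argue by contradiction, comparing the cylinder decompositions in the two directions and showing that any near-coincidence of two ``fat'' directions would force a definite amount of area to be concentrated in thin cylinders, which cannot happen. Suppose $\theta \in \Theta_{\omega}^{*}(L)$ and that there were a direction $\theta' \neq \theta$ with $\theta' \in \Theta_{\omega}^{*}(R)$ and $|\theta'-\theta| < \frac{1}{20(4g-4)RL}$. By definition of $\Theta_{\omega}^{*}$, the periodic cylinders of length at most $L$ in direction $\theta$ cover area at least $\frac{3}{5}$, and the periodic cylinders of length at most $R$ in direction $\theta'$ also cover area at least $\frac{3}{5}$. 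Since the surface has unit area, the set $S$ of points lying simultaneously in a $\theta$-cylinder of length $\leq L$ and a $\theta'$-cylinder of length $\leq R$ has area at least $\frac{3}{5}+\frac{3}{5}-1 = \frac{1}{5}$.

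The key step is to feed Lemma~\ref{cyl flow} into this overlap in the contrapositive direction. Let $x \in S$, and let $C$ be the $\theta$-cylinder of length $L_C \leq L$ and area $a_C$ containing $x$; its width is $a_C/L_C = |\vv|$. Because $x$ also lies in a $\theta'$-cylinder of length $\leq R$ and $|\theta'-\theta| < \pi/3$, Lemma~\ref{cyl flow}---more precisely the drift estimate $kL_C\tan(\epsilon)\geq |\vv|$ and $\tan(\epsilon) < 2\epsilon$ appearing in its proof---forbids this unless the angular gap is at least the width divided by $2R$; that is, the cylinder $C$ must satisfy $\frac{a_C}{L_C} \leq 2R\,|\theta'-\theta|$. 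In other words, every $\theta$-cylinder meeting $S$ is \emph{thin}: its width is at most $2R\,|\theta'-\theta|$, and hence its area is at most $L_C \cdot 2R\,|\theta'-\theta| \leq 2LR\,|\theta'-\theta|$.

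It remains to count. There are at most $4g-4$ parallel cylinders in the fixed direction $\theta$ (the same bound used in the proof of Lemma~\ref{lemma:lattice:gap}), so $S$ is covered by at most $4g-4$ thin $\theta$-cylinders and therefore has area at most $(4g-4)\cdot 2LR\,|\theta'-\theta|$. Using the assumed bound $|\theta'-\theta| < \frac{1}{20(4g-4)RL}$, this gives $\mathrm{area}(S) < (4g-4)\cdot 2LR \cdot \frac{1}{20(4g-4)RL} = \frac{1}{10}$, contradicting $\mathrm{area}(S) \geq \frac{1}{5}$. Hence no such $\theta' \neq \theta$ exists; and since $\theta$ itself lies in $\Theta_{\omega}^{*}(R)$ (every cylinder of length $\leq L$ is a cylinder of length $\leq R$, so the covering estimate persists), the intersection is exactly $\{\theta\}$. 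The clean estimate actually yields the larger radius $\frac{1}{10(4g-4)RL}$, so the stated radius follows a fortiori.

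The only genuinely delicate point is the second paragraph. A naive attempt to apply Lemma~\ref{cyl flow} to a single \emph{large}-area $\theta$-cylinder fails, because a thin $\theta$-cylinder can legitimately be crossed by short $\theta'$-cylinders no matter how close $\theta'$ is to $\theta$; one cannot exclude overlap cylinder-by-cylinder. The correct move is the converse one taken above---overlap \emph{forces} thinness---after which the uniform bound $4g-4$ on the number of parallel cylinders controls the total thin area. One should also keep track of the minor slack between length $<R$ and length $\leq R$ in Lemma~\ref{cyl flow}, and between $\tan(\epsilon)$ and $\epsilon$; these are precisely the discrepancies that the generous constant $20$ (versus the $10$ of the sharp estimate) is there to absorb.
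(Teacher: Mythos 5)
Your proof is correct and uses the same ingredients as the paper's: Lemma~\ref{cyl flow}, the bound of $4g-4$ on the number of cylinders in a fixed direction, and an area pigeonhole between the two cylinder decompositions. The paper runs the argument in the direct form---at least half the surface lies in $\theta$-cylinders of length at most $L$ and area at least $\frac{1}{10(4g-4)}$, so one such fat cylinder must meet a $\theta'$-cylinder of length at most $R$, and Lemma~\ref{cyl flow} applied to that fat cylinder gives the gap---whereas you run the contrapositive (a small gap forces every overlapping $\theta$-cylinder to be thin, so the overlap has area less than $\frac{1}{10}<\frac{1}{5}$); the two are equivalent, and both correctly read the bound in Lemma~\ref{cyl flow} as the width divided by $2R$, i.e.\ $\frac{a}{2LR}$, rather than the $\frac{1}{2LRa}$ misprinted in its statement.
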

\begin{proof} One can easily see that the number of 
saddle connections in a given direction is at most $4g-4$.
 Therefore $\theta \in \Theta_{\omega}^{*}(L)$ 
then at least half the points in the surface must lie in 
cylinders of area at least $\frac{1}{10(4g-4)}$.
 If $\phi \in \Theta_{\omega}^{\frac3 5}(R)$ then some points of the surface
 must lie in periodic cylinders in direction $\phi$ and
 periodic cylinders of area at least $\frac 1 {10(4g-4)}$
 in direction $\theta.$
The result follows from Lemma \ref{cyl flow}.
\end{proof}
In the square torus, periodic directions correspond to rational slopes. Notice that if $\frac a L \neq  \frac b R $ are rational numbers and $CL<R$ then $|\frac a L- \frac b R|>\frac{C}{R^2}$. This implies that for any $C'>0$ a positive proportion of periodic directions of length less than $R$ are at least $\frac{C'}{R^2}$ separated from the closest periodic direction of length less than $R$. The next theorem generalizes this fact for periodic cylinders of substantial area.

\begin{Theorem}\label{p sig 0 lower}
If the cardinality of $\Theta_{\omega}^{*}(R)=\{\theta_1 \leq \theta_2 \leq \dots \theta_N\}$
 grows quadratically then 
 $$\underset{\sigma \to \infty}{\liminf} \, \underset{R \to \infty}{\liminf} \,\sigma^2 \frac{|\{\theta_i \in \Theta_{\omega}^{*}(R): \theta_{i+1}-\theta_i \geq \frac{\sigma}{R}\}|}{|\Theta_{\omega}^{\frac 3 5 }(R)|}>0.$$ 
\end{Theorem}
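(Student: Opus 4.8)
The plan is to mirror the torus computation given just before the statement, with Lemma~\ref{farey} playing the role of the elementary Farey estimate $|\frac{a}{L}-\frac{b}{R}|>\frac{C}{R^2}$. The heuristic is that a direction already carrying substantial cylinder area at a \emph{coarse} length scale $L\ll R$ is automatically well-separated from every other direction of $\Theta_{\omega}^{*}(R)$, and that quadratic growth forces these coarse directions to constitute a definite proportion (of order $(L/R)^2$) of $\Theta_{\omega}^{*}(R)$. Choosing $L$ so that the isolation radius supplied by Lemma~\ref{farey} equals the target gap $\sigma/R^2$ (as in the preceding torus discussion) then converts this proportion directly into the desired lower bound.

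First I would record the monotonicity $\Theta_{\omega}^{*}(L)\subseteq\Theta_{\omega}^{*}(R)$ for $L\le R$: a periodic cylinder of length at most $L$ has length at most $R$, so if the cylinders of length $\le L$ in a direction $\theta$ fill area at least $\tfrac35$, the same holds at scale $R$. Given $\sigma$, I would then set the coarse scale $L:=\frac{R}{20(4g-4)\sigma}$, chosen precisely so that $\frac{1}{20(4g-4)RL}=\frac{\sigma}{R^2}$. For each $\theta\in\Theta_{\omega}^{*}(L)$ the monotonicity gives $\theta\in\Theta_{\omega}^{*}(R)$, so $\theta=\theta_i$ for some $i$; and Lemma~\ref{farey} guarantees that the interval $(\theta,\theta+\frac{\sigma}{R^2})$ contains no other element of $\Theta_{\omega}^{*}(R)$, whence $\theta_{i+1}-\theta_i\ge\frac{\sigma}{R^2}$. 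Thus every coarse direction is one of the well-separated $\theta_i$ counted in the numerator, giving
$$\big|\{\theta_i\in\Theta_{\omega}^{*}(R):\theta_{i+1}-\theta_i\ge\tfrac{\sigma}{R^2}\}\big|\ \ge\ |\Theta_{\omega}^{*}(L)|.$$

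It remains to compare the two cardinalities. Under the hypothesis that $|\Theta_{\omega}^{*}(R)|$ grows quadratically, the same holds for $|\Theta_{\omega}^{*}(L)|$ (it is the identical quantity evaluated at $L$, and $L\to\infty$ as $R\to\infty$ with $\sigma$ fixed), so
$$\frac{|\Theta_{\omega}^{*}(L)|}{|\Theta_{\omega}^{*}(R)|}\ \longrightarrow\ \frac{L^2}{R^2}\ =\ \frac{1}{(20(4g-4)\sigma)^2}\qquad(R\to\infty).$$
Multiplying by $\sigma^2$ yields the constant $\frac{1}{(20(4g-4))^2}>0$, independent of both $R$ and $\sigma$, so the inner $\liminf_{R\to\infty}$ is at least this constant for every $\sigma$, and the outer $\liminf_{\sigma\to\infty}$ inherits it. Since the whole argument reduces to Lemma~\ref{farey} (itself a consequence of Lemma~\ref{cyl flow}) together with the quadratic-growth hypothesis, I expect no serious analytic obstacle; the only points requiring care are the bookkeeping of the two length scales $L$ and $R$, the order of the two liminfs, and interpreting \emph{grows quadratically} as two-sided bounds $m R^2\le|\Theta_{\omega}^{*}(R)|\le MR^2$ so that the displayed ratio genuinely converges (with two-sided bounds the limiting constant is merely replaced by $\frac{m}{M(20(4g-4))^2}$, still positive), the single wrap-around gap being negligible throughout.
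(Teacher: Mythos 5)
Your proposal is correct and follows essentially the same route as the paper's proof: set the coarse scale $L = R/(20(4g-4)\sigma)$, use Lemma~\ref{farey} to show each direction of $\Theta_{\omega}^{*}(L)$ is isolated within $\Theta_{\omega}^{*}(R)$ at the target gap, and use quadratic growth to bound $|\Theta_{\omega}^{*}(L)|/|\Theta_{\omega}^{*}(R)|$ below by a constant times $(L/R)^2 = c/\sigma^2$. (Your reading of the gap threshold as $\sigma/R^2$ matches the scale actually produced by the paper's own choice of $L$.)
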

\begin{proof} By the fact that $\omega$ has quadratic growth there exists $c>0$ such that $$\frac{|\Theta_{\omega}^{*}(L)|}{|\Theta_{\omega}^{*}(R)|}\geq c \left(\frac L R\right)^2.$$  Lemma \ref{farey} implies that if
 $\theta \in \Theta_{\omega}^{*}(L)$ then 
it has a gap of size at least $\frac 1 {20(4g-4)RL}$ to the closest
direction in $\Theta_{\omega}^{*}(R)$. Let $L=\frac {R}{\sigma 20(4g-4)}$.
\end{proof}

\section{Billiards}\label{sec:billiards} In this section we use Theorem~\ref{theorem:wedge:limit} to obtain results about special trajectories for billiards, as discussed in \S\ref{subsubsec:billiards:barriers}. We follow closely the exposition in~\cite{EMM, EMS}, particularly focusing on the examples studied in the latter paper.

\subsection{Rectangles with barriers}\label{subsec:barrier} We recall notation from \S\ref{subsubsec:billiards:barriers}: given $\alpha \in \R$, consider the polygon $P_{\alpha}$ whose boundary is the boundary of the square $[0, 1] \times [0, 1]$ together with a barrier given by the vertical segment $\{1/2\} \times [0, \alpha]$. 

We recall the `unfolding' procedure from~\cite{EMS}: to obtain a translation surface $\omega_{\alpha}$ from $P_{\alpha}$, take four copies of $P= P_{\alpha}$ which are images of $P$
under reflection in the two coordinate axes and reflection in the origin. 

Identifying the interior sides, we obtain a square of area 4 with two vertical double lines, corresponding to the interval $1/2 \times [0,\alpha]$. Identify the top and bottom of the square, and the left and right sides. The glue the left side of the
right line to the right side of the left line, and the right side of
the right line to the left side of the left line. See Figure~\ref{unfold} below.

\makefig{Unfolding $P_{\alpha}$ to $\omega_{\alpha}$.}{unfold}{\begin{picture}(0,0)%
\includegraphics{developwall.pstex}%
\end{picture}%
\setlength{\unitlength}{4144sp}%
\begingroup\makeatletter\ifx\SetFigFont\undefined%
\gdef\SetFigFont#1#2#3#4#5{%
  \reset@font\fontsize{#1}{#2pt}%
  \fontfamily{#3}\fontseries{#4}\fontshape{#5}%
  \selectfont}%
\fi\endgroup%
\begin{picture}(5353,2300)(870,-2593)
\put(1351,-2131){\makebox(0,0)[lb]{\smash{{\SetFigFont{10}{12.0}{\rmdefault}{\mddefault}{\updefault}{\color[rgb]{0,0,0}$1/2$}%
}}}}
\put(1126,-1141){\makebox(0,0)[lb]{\smash{{\SetFigFont{10}{12.0}{\rmdefault}{\mddefault}{\itdefault}{\color[rgb]{0,0,0}$\alpha$}%
}}}}
\end{picture}%
}

Rescaling by $1/4$, we obtain an area $1$ translation surface $\omega_{\alpha} \in \hh(1,1)$, with the $2$ zeroes located at the endpoints of the vertical lines. A billiard trajectory $\lambda$ on $P_{\alpha}$ corresponds to a straight line on $\om_{\alpha}$.

\subsection{Branched covers}\label{subsec:branch} As mentioned in \S\ref{subsubsec:billiards:barriers}, the crucial property of the surface $\om_{\alpha}$ is that it is a double (branched) cover of the torus. That is, there is a covering map (branched at the zeros) $\pi: \omega_{\alpha} \rightarrow \C/\Lambda$ where $\Lambda$ is a lattice in $\C$ and that $\omega_{\alpha}$ is obtained by pulling back the form $dz$. 

The set of all $\om \in \hh(1, 1)$ satisfying this property is a closed, $SL(2, \R)$-subvariety $\M$~\cite[Lemma 2.1]{EMS}. $\M$ is a finite cover of $\T = (SL(2, \R) \semidirect \R^2)/(SL(2, \Z) \semidirect \Z^2)$ and the covering map commutes with the $SL(2,\R)$-action~\cite[Lemma 2.2]{EMS}. Recall that  $\T$ is the space of tori with two marked points (assuming that one marked point is always at the origin). The covering map $\Pi: \M \rightarrow \T$ is given by 
$$\Pi(\omega) = (\Lambda, \pi(z_1), \pi(z_2)),$$ 
\noindent where $\Lambda \subset \C$ is the lattice so that $\omega$ covers $\C/\Lambda$ and $z_1, z_2$ are the zeros of $\omega$. 

Let $\alpha \in \R$ be irrational. For $\omega_{\alpha} \in \M$, let $\M(\alpha)$ denote the connected component of $\M$ containing $\om_{\alpha}$. Let $\bar{\mu}$ denote the pullback of the Haar probability measure on $\T$. It is an ergodic $SL(2, \R)$ invariant measure on $\M(\alpha)$. The circle limit measure for $\omega_{\alpha}$ is $\bar{\mu}$~\cite[Lemma 2.4]{EMS}. Thus, we obtain the following corollary to Theorem~\ref{theorem:wedge:limit}.

\begin{Cor}\label{cor:wedge:billiard} For any irrational $\alpha$,
$$\lim_{R \rightarrow \infty} \lambda(\theta: N^{\theta}_R(\omega_{\alpha}, \sigma, c) = k) = \bar{\mu}(\omega \in \M(\alpha): \La^{sc}_{\om} \cap T(c, \sigma) = k).$$
\end{Cor}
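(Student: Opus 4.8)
The plan is to derive Corollary~\ref{cor:wedge:billiard} directly from Theorem~\ref{theorem:wedge:limit} by verifying its single hypothesis for the surfaces $\omega_\alpha$. Theorem~\ref{theorem:wedge:limit} asserts that \emph{whenever} $\omega_0$ possesses a circle limit measure $\mu_0$, one has $\lim_{R\to\infty}\lambda(\theta: N^\theta_R(\omega_0,\sigma,c)=k)=\mu_0(\omega:\Lambda_\omega\cap T(c,\sigma)=k)$. So the entire content of the corollary is to identify, for irrational $\alpha$, the relevant circle limit measure as $\bar\mu$ and to relocate the computation from the ambient stratum $\hh$ to the subvariety $\M(\alpha)$.

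First I would invoke the structural facts recorded in \S\ref{subsec:branch}: by \cite[Lemma 2.1]{EMS} the locus of double covers of tori is a closed $SL(2,\R)$-invariant subvariety $\M\subset\hh(1,1)$, and $\omega_\alpha\in\M$. Since the $SL(2,\R)$-action preserves $\M$, the circle $\{r_\theta\omega_\alpha\}$ and all its $g_t$-translates stay inside $\M(\alpha)$, so each measure $\nu_{t,\omega_\alpha}$ is supported on $\M(\alpha)$; hence any weak-$*$ limit is supported there as well. The key input, \cite[Lemma 2.4]{EMS} (which rests on Shah's theorem \cite{Shah:SL2} and Ratner's measure classification as cited in \S\ref{subsubsec:billiards:barriers}), tells us that for irrational $\alpha$ this limit exists and equals the pullback measure $\bar\mu$, which is the ergodic $SL(2,\R)$-invariant Haar-pullback probability measure on $\M(\alpha)$. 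Having established that $\bar\mu$ is a genuine circle limit measure for $\omega_\alpha$ in the precise sense defined in \S\ref{subsec:limit:circles} (it is $SL(2,\R)$-invariant, and a probability measure by \cite[Theorem 5.2]{EskinMasur}), Theorem~\ref{theorem:wedge:limit} applies verbatim and yields the stated equality, with $\mu_0=\bar\mu$ evaluated on the event $\{\Lambda^{sc}_\omega\cap T(c,\sigma)=k\}$.

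One technical point I would be careful to address is that the counting functions $N^\theta_R(\omega_\alpha,\sigma,c)$ implicitly refer to $\Lambda^{sc}_{\omega_\alpha}$, the holonomy vectors of saddle connections of $\omega_\alpha$ itself, not of its torus quotient; the limit measure $\bar\mu$ is then evaluated on surfaces $\omega\in\M(\alpha)$ using \emph{their} saddle-connection holonomy sets $\Lambda^{sc}_\omega$, which is exactly how the right-hand side is written. Because $\M(\alpha)$ is an invariant subvariety carrying its own Siegel-Veech theory, the indicator functions $f_k$ of the level sets $\{\omega:\Lambda^{sc}_\omega\cap T(c,\sigma)=k\}$ restrict consistently, and no discrepancy arises between ``saddle connections of the cover'' and ``saddle connections of the quotient torus.''

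The main obstacle is not in the present argument but has already been absorbed into the cited results: the genuine difficulty is proving that $\bar\mu$ is the circle limit measure, i.e.\ controlling the equidistribution of the expanding circles $g_t r_\theta\omega_\alpha$, which requires the unipotent-dynamics machinery of Shah and Ratner and fails for rational $\alpha$ (where the orbit closure degenerates). Granting that, the only remaining care is the approximation step already handled inside the proof of Theorem~\ref{theorem:wedge:limit}: the $f_k$ are not continuous but are indicator functions of level sets of a Siegel-Veech transform, so one must ensure the weak-$*$ convergence $\nu_{t,\omega_\alpha}\to\bar\mu$ passes to these discontinuous integrands. Since Theorem~\ref{theorem:wedge:limit} has been stated and proved at exactly this level of generality, I would simply cite it and note that all its hypotheses hold for $\omega_\alpha$, completing the proof.

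\begin{proof}
By \cite[Lemma 2.1]{EMS}, the set of $\omega\in\hh(1,1)$ that are double branched covers of a torus is a closed, $SL(2,\R)$-invariant subvariety $\M$, and $\omega_\alpha\in\M$. Fix the connected component $\M(\alpha)\subset\M$ containing $\omega_\alpha$. Since $SL(2,\R)$ preserves $\M(\alpha)$, every translate $g_t r_\theta\omega_\alpha$ lies in $\M(\alpha)$, so each measure $\nu_{t,\omega_\alpha}$ is supported on $\M(\alpha)$, and consequently so is any weak-$*$ subsequential limit. By \cite[Lemma 2.4]{EMS}, which applies Shah's theorem \cite{Shah:SL2} and Ratner's measure classification, for irrational $\alpha$ the limit $\lim_{t\to\infty}\nu_{t,\omega_\alpha}$ exists and equals $\bar\mu$, the pullback of the Haar probability measure on $\T$; moreover $\bar\mu$ is an ergodic, $SL(2,\R)$-invariant probability measure on $\M(\alpha)$. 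Thus $\bar\mu$ is a circle limit measure for $\omega_\alpha$ in the sense of \S\ref{subsec:limit:circles}. Applying Theorem~\ref{theorem:wedge:limit} with $\omega_0=\omega_\alpha$ and $\mu_0=\bar\mu$ gives
$$\lim_{R\to\infty}\lambda(\theta: N^\theta_R(\omega_\alpha,\sigma,c)=k)=\bar\mu(\omega\in\M(\alpha):\La^{sc}_\omega\cap T(c,\sigma)=k),$$
as claimed.
\end{proof}
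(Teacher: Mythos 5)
Your proposal is correct and follows exactly the paper's route: the corollary is stated in \S\ref{subsec:branch} as an immediate consequence of Theorem~\ref{theorem:wedge:limit} once \cite[Lemma 2.4]{EMS} (via Shah/Ratner) identifies $\bar\mu$ as the circle limit measure for $\omega_\alpha$ when $\alpha$ is irrational. Your additional remarks about the support of $\nu_{t,\omega_\alpha}$ lying in $\M(\alpha)$ and the approximation step for the indicator functions $f_k$ are consistent with what the paper leaves implicit.
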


\subsubsection{Branched covers of lattice surfaces}\label{subsubsec:branch:lattice} A similar calculation of circle limit measures was carried out for branched covers of lattice surfaces in~\cite{EMM}. Thus, we can obtain a version of Corollary~\ref{cor:wedge:billiard} for these surfaces as well. This will yield results for triangular billiards $P_n$ with angles
  $$ \frac{n-2}{2n} \pi, \ \frac{n-2}{2n} \pi, \ \frac{4}{2n} \pi
  ,$$ where $n \ge 5$, $n$ odd. For details, see~\cite[\S9]{EMM}. 

\subsection{Lattice translates}\label{subsec:translate} To obtain a version of Theorem~\ref{theorem:main:gap} for $\om \in \M$, we have to understand the set of holonomy vectors of saddle connections $\La^{sc}_{\om}$. Suppose $\om$ is a cover of $\C/\La$, with covering map $\pi$. Note that under the projection to the torus, the saddle connections must connect either $\pi(z_1) = 0$ or $\pi(z_2)$ to themselves or to each other. That is, they must be primitive vectors in the lattice $\La$ or in the translate $\La + \vv$ where $\vv$ is a choice of vector connecting the two marked points on the torus (defined up to $\La$, so $\La + \vv$ is well-defined). Thus, if we have $\Pi(\om) = (\La, \vv)$ (viewing $\T$ as the space of marked tori, or equivalently, lattices and a choice of vector), we have $$\La^{sc}_{\om} = \La_{prim} \cup ( \La_{prim}  + \vv),$$ where $\La_{prim}$ denotes the set of primitive vectors in $\La$. 

$\T$ is a fiber bundle over the modular surface $SL(2,\R)/SL(2, \Z)$. It can be broken up into a Haar measure on $SL(2, \R)/SL(2,\Z)$ together with Lebesgue measure on the torus fibers. While a lattice $\La$ will never have two points in $T(0, \sigma)$ for $\sigma < <1$, we can construct a positive measure set of pairs $(\La, \vv) \in \T$ so that $\La_{prim} \cup (\La_{prim} + \vv)$ does intersect $T(\sigma)$ (at least) twice for all $\sigma >0$ by considering the lattices $\La$ so that $\La \cap T(\sigma) \neq \emptyset$ and $\vv \in T(\sigma)$ ($\vv \notin \La$). Thus for all $\alpha$ irrational, all $\sigma >0$, we have
$$\bar{\mu}(\om \in \M(\alpha): \La^{sc}_{\om} \cap T(\sigma) \geq 2) >0.$$
\noindent (In fact, we will obtain a set of measure proportional to $\sigma^4$) (see~\cite[Remark 2.3]{MS} for an explicit description of the distribution of gaps for the set $\vv + \La$). Thus, we obtain:

\begin{Cor}\label{cor:main:gap} For all irrational $\alpha$, 
\begin{equation}\label{eq:cor:gap}\lim_{R \rightarrow \infty} R^2 \gamma^{\omega_{\alpha}}(R) = 0.
\end{equation}
\noindent Moreover, for any $\epsilon >0$, the proportion of gaps less than $\epsilon/R^2$ is positive. That is, writing $\Theta^{\om_{\alpha}}_R : = \{0 \le \theta_1 \le \theta_2 \le \ldots \le \theta_n\}$, we have
\begin{equation}\label{eq:cor:proportion}
\lim_{R \rightarrow \infty} \frac{ |\{1 \le i \le \tilde{N}(\omega_{\alpha}, R): (\theta_{i+1} - \theta_i) \le \epsilon/R^2\}|}{\tilde{N}(\omega_{\alpha}, R)} >0
\end{equation}
\end{Cor}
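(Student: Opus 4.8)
The plan is to obtain Corollary~\ref{cor:main:gap} as an essentially immediate consequence of Corollary~\ref{cor:gap:limit}, specialized to $\omega_0 = \omega_{\alpha}$ with circle limit measure $\mu_0 = \bar{\mu}$. Two facts make this work, and both are already in hand: first, by \cite[Lemma 2.4]{EMS} (recalled in \S\ref{subsec:branch}) the measure $\bar{\mu}$ on $\M(\alpha)$ is the circle limit measure of $\omega_{\alpha}$ for every irrational $\alpha$, so Corollary~\ref{cor:wedge:billiard} and hence Theorem~\ref{theorem:wedge:limit} apply; second, the computation in \S\ref{subsec:translate} supplies exactly the positivity input demanded by the hypothesis of Corollary~\ref{cor:gap:limit}. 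The only real task is to splice that measure estimate into the equidistribution machinery.

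First I would record the measure estimate in the form needed. Using the identification $\La^{sc}_{\om} = \La_{prim} \cup (\La_{prim} + \vv)$ for $\Pi(\om) = (\La, \vv) \in \T$, together with the product structure of $\bar{\mu}$ over $SL(2,\R)/SL(2,\Z)$ with Lebesgue measure on the torus fibers, the set of $(\La, \vv)$ having a primitive vector of $\La$ in $T(\sigma)$ and having offset $\vv \in T(\sigma)$ has positive $\bar{\mu}$-measure for every $\sigma > 0$. This is precisely the content of \S\ref{subsec:translate}, yielding
$$\bar{\mu}\left(\om \in \M(\alpha) : |\La^{sc}_{\om} \cap T(\sigma)| \geq 2\right) > 0$$
for all $\sigma > 0$ (indeed with measure proportional to $\sigma^4$, though only positivity is used). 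Next I would feed this into the distribution theorem: by Corollary~\ref{cor:wedge:billiard} with $c = 0$, for each fixed $k$ we have $\lambda(\theta : N^{\theta}_R(\omega_{\alpha}, \sigma, 0) = k) \to \bar{\mu}(|\La^{sc}_{\om} \cap T(\sigma)| = k)$ as $R \to \infty$. Summing over $k \geq 2$ and applying Fatou's lemma gives
$$\liminf_{R \to \infty} \lambda\left(\theta : N^{\theta}_R(\omega_{\alpha}, \sigma, 0) \geq 2\right) \geq \bar{\mu}\left(|\La^{sc}_{\om} \cap T(\sigma)| \geq 2\right) > 0.$$
From here the reasoning is identical to the proof of Theorem~\ref{theorem:main:gap}: whenever the wedge $A^{\theta}_R(0, \sigma)$ contains two holonomy vectors, both of length at most $R$, their arguments lie in an interval of width $2\sigma/R^2$, so for all large $R$ one has $\gamma^{\omega_{\alpha}}(R) \leq 2\sigma/R^2$. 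Taking $\sigma = 1/n$ and letting $n \to \infty$ gives (\ref{eq:cor:gap}), while the positive $\liminf$ above furnishes a positive lower bound for the limiting proportion of gaps below $\epsilon/R^2$, which is (\ref{eq:cor:proportion}).

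The routine steps — invoking Corollary~\ref{cor:gap:limit}, the Fatou interchange, and the wedge geometry — carry no genuine difficulty. The one point that deserves care is the ``$\geq 2$ versus $= 2$'' discrepancy: the hypothesis of Corollary~\ref{cor:gap:limit} is stated via $p^0_2(\sigma) > 0$ (exactly two vectors), whereas \S\ref{subsec:translate} most naturally produces the at-least-two statement. This is harmless, since small gaps are already forced by the presence of at least two vectors in a common wedge, and additional vectors only shrink gaps further; the Fatou step above is exactly what converts the at-least-two measure bound into a positive limiting proportion, so no exactly-two count is ever required. Consequently the entire mathematical content sits in the estimate of \S\ref{subsec:translate}, and the corollary itself is a bookkeeping exercise gluing that estimate to the limit-measure framework. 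The main obstacle, such as it is, is verifying that the defining conditions for the positive-measure set are genuinely positive-measure and that the saddle-connection identification correctly accounts for primitivity (a $z_1$-to-$z_2$ segment being a saddle connection precisely when it avoids the marked points in its interior) — but all of this has been carried out upstream.
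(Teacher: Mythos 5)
Your proposal is correct and follows essentially the same route as the paper: the authors likewise deduce Corollary~\ref{cor:main:gap} by combining the identification of $\bar{\mu}$ as the circle limit measure of $\omega_{\alpha}$ (via \cite[Lemma 2.4]{EMS}) with the positivity of $\bar{\mu}(\om: |\La^{sc}_{\om}\cap T(\sigma)|\geq 2)$ established in \S\ref{subsec:translate}, and then invoke Corollary~\ref{cor:gap:limit}. Your extra care with the ``$\geq 2$ versus $=2$'' point and the Fatou step only makes explicit what the paper leaves implicit (its own proof of Theorem~\ref{theorem:main:gap} also really uses the at-least-two event), so there is no substantive difference.
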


\end{document}